\xpatchcmd{\thmt@restatable}{\csname #2\@xa\endcsname\ifx\@nx#1\@nx\else[{#1}]\fi}%
{\IfAppendix{\csname #2\@xa\endcsname}{\csname #2\@xa\endcsname[{#1}]}}
\crefname{pr}{Problem}{Problems}
\newcommand{\bdb}[1]{{#1}}
\newtheorem{theorem}{Theorem}[section]
\newtheorem{lemma}[theorem]{Lemma}
\newtheorem{definition}[theorem]{Definition}
\theoremstyle{remark}
\newcommand{\norm}[1]{\left\|{#1}\right\|}
\newcommand{\brac}[1]{\left( #1 \right) }
\newcommand{\op}[1]{ \operatorname{#1} }
\newcommand{\normTV}[1]{\left\| #1 \right\| _{\op{TV}}}
\newcommand{\abs}[1]{\left| #1 \right|}
\newcommand{\normone}[1]{\left\| #1 \right\| _{1}}
\newcommand{\norminf}[1]{\left\|#1\right\|_{\infty}}
\newcommand{\enva}[1]{\abs{#1}_\downarrow}
\newcommand{\envan}[1]{\norm{#1}_\downarrow}
\newcommand{\env}[1]{#1_\infty}
\newcommand{\LB}{\mathrm{LB}}
\newcommand{\wt}{\widetilde}
\newcommand{\gridsep}{\zeta}
\newcommand{\spikesep}{\Delta}
\newcommand{\sampleprox}{\gridsep}
\newcommand{\bcoeff}{\alpha}
\newcommand{\wocoeff}{\beta}
\newcommand{\wtcoeff}{\gamma}
\newcommand{\bcoeffmin}{\bcoeff_{\LB}}
\newcommand{\bcone}{\kappa^B}
\newcommand{\bctwo}{\mu^B}
\newcommand{\bcthree}{\rho^B}
\newcommand{\wocone}{\kappa^{W^1}}
\newcommand{\woctwo}{\mu^{W^1}}
\newcommand{\wocthree}{\rho^{W^1}}
\newcommand{\wtcone}{\kappa^{W^2}}
\newcommand{\wtctwo}{\mu^{W^2}}
\newcommand{\wtcthree}{\rho^{W^2}}
\newcommand{\coeffone}{\kappa}
\newcommand{\coefftwo}{\mu}
\newcommand{\coeffthree}{\rho}
\newcommand{\signs}{\tau}
\newcommand{\II}{I}
\newcommand{\CC}{\mathcal{C}}
\newcommand{\BB}{\mathcal{B}}
\newcommand{\WW}{\mathcal{W}}
\newcommand{\WWo}{\mathcal{W}^1}
\newcommand{\WWt}{\mathcal{W}^2}
\newcommand{\DoB}{\mathcal{B}_x}
\newcommand{\DoWo}{\mathcal{W}^1_x}
\newcommand{\DoWt}{\mathcal{W}^2_x}
\newcommand{\DtB}{\mathcal{B}_y}
\newcommand{\DtWo}{\mathcal{W}^1_y}
\newcommand{\DtWt}{\mathcal{W}^2_y}
\newcommand{\DtWti}{(\DtWt)^{-1}}
\renewcommand{\SS}{\mathcal{S}}
\newcommand{\SSo}{\SS_1}
\newcommand{\SSt}{\SS_2}
\newcommand{\SSth}{\SS_3}
\newcommand{\SSthi}{\SSth^{-1}}
\newcommand{\SSoi}{\SSo^{-1}}
\newcommand{\BBp}{\BB^{(1)}}
\newcommand{\WWp}{\WW^{(1)}}
\newcommand{\WWpi}{(\WWp)^{-1}}
\newcommand{\MM}{M}
\newcommand{\KK}{\mathcal{K}}
\newcommand{\tQ}{\tilde{Q}}
\newcommand{\tq}{\tilde{q}}
\newcommand{\RR}{\mathbb{R}}
\newcommand{\epsbump}{\varepsilon_{\mathrm{B}}}
\newcommand{\epswave}{\varepsilon_{\mathrm{W}}}
\newcommand{\UU}{\mathcal{U}}
\newcommand{\envEV}[1]{\enva{\lambda(#1)}}
\newcommand{\ltmaxwo}{\envEV{W^1}}
\newcommand{\ltmaxwt}{\envEV{W^2}}
\newcommand{\ltamaxb}{\envEV{B}}
\newcommand{\Dt}{D(B)}
\newcommand{\sign}{\operatorname*{sign}}
\newcommand{\sdfraction}{0.65}
\numberwithin{equation}{section}
\title{A Sampling Theorem for Deconvolution in Two Dimensions}
\author{Joseph McDonald\thanks{Joint first authors} \thanks{Courant Institute of Mathematical Sciences, New York University}\hspace{0.4cm}
Brett Bernstein\footnotemark[1] \footnotemark[2]
\hspace{0.4cm} Carlos Fernandez-Granda\footnotemark[2] \thanks{Center for Data Science,
    New York University}}
\date{}
\begin{document}

\maketitle

\vspace{-0.3in}

\begin{abstract}
\noindent
This work studies the problem of estimating a two-dimensional superposition of point sources or spikes from samples of their convolution with a Gaussian kernel. Our results show that minimizing a continuous counterpart of the $\ell_1$ norm exactly recovers the true spikes if they are sufficiently separated, and the samples are sufficiently dense. In addition, we provide numerical evidence that our results extend to non-Gaussian kernels relevant to microscopy and telescopy. 
\end{abstract}

{\bf Keywords.} Deconvolution, sampling theory, convex optimization, sparsity, super-resolution, dual certificate,  Gaussian convolution.

\section{Introduction}

Deconvolution is an inverse problem where the goal is to estimate a signal $\mu$ from measurements $y$ modeled as the convolution of $\mu$ with a kernel $K$. More specifically, the measurements $y$ represent samples of the convolved signal observed at certain points $s_i\in\RR^d$,
\begin{equation}
y_i = (K \ast \mu)(s_i),\quad i=1,\ldots,n.
\end{equation}
This problem has applications in various fields including ultrasound \cite{jensen,yu}, optics \cite{broxton}, microscopy \cite{mcnally}, and geology \cite{claerbout,taylor1979deconvolution,chapman}. Often the signal is well modeled as a super-position of point sources, such as celestial objects in astronomical images \cite{lucy,richardson}, fluorescent probes in microscopy \cite{zhu2012}, or neural spike trains in neuroscience \cite{neural-ekanadham}. In these settings the convolution kernel $K$ represents the impulse response of a particular system, e.g. the point spread function of an optical lens. 

Mathematically, a signal $\mu$ consisting of point sources can be represented as an atomic signed measure on $\RR^d$:
\begin{equation}
    \mu := \sum_{t_j\in T} a_j\delta_{t_j},
    \label{eq:signal}
\end{equation}
where $\delta_{t_j}$ is a Dirac measure located at $t_j$, $T:=\{t_1,\ldots,t_N\}\subset\RR^d$ is the support of the signal, and $a_1,\ldots,a_N\in\RR$ are the amplitudes.  The samples are given by
\begin{equation}
    y_i = (K \ast \mu)(s_i) = 
    \sum_{t_j\in T} a_j K(s_i-t_j),\quad
    i = 1,2,\ldots,n.
    \label{eq:samples}
\end{equation}
A two-dimensional example ($d=2$) of a signal and the associated samples is shown in \Cref{fig:spikedata}. 

\begin{figure}[!t]
\begin{subfigure}[h]{0.3\textwidth}
\includegraphics[scale=\sdfraction]{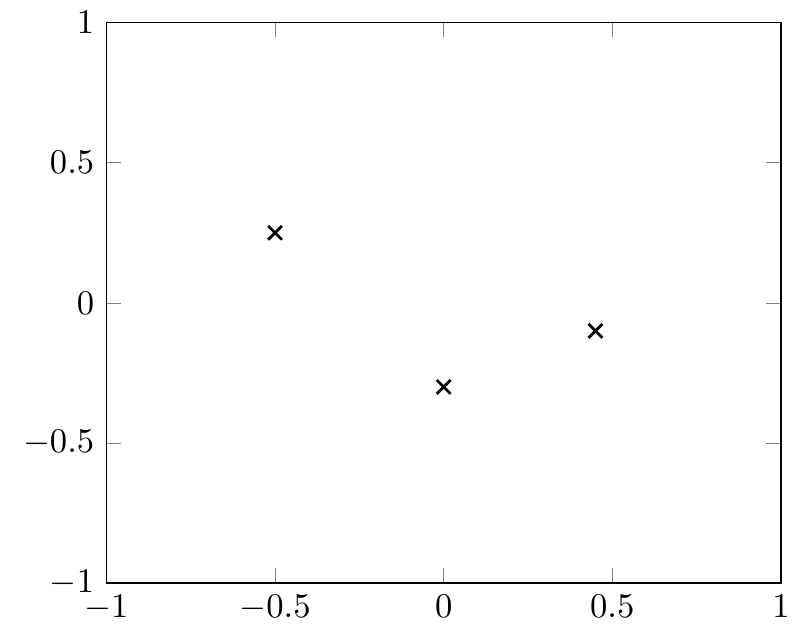}
\caption{\label{fig:spikedata1}}
\end{subfigure}
\begin{subfigure}[h]{0.3\textwidth}
\includegraphics[scale=\sdfraction]{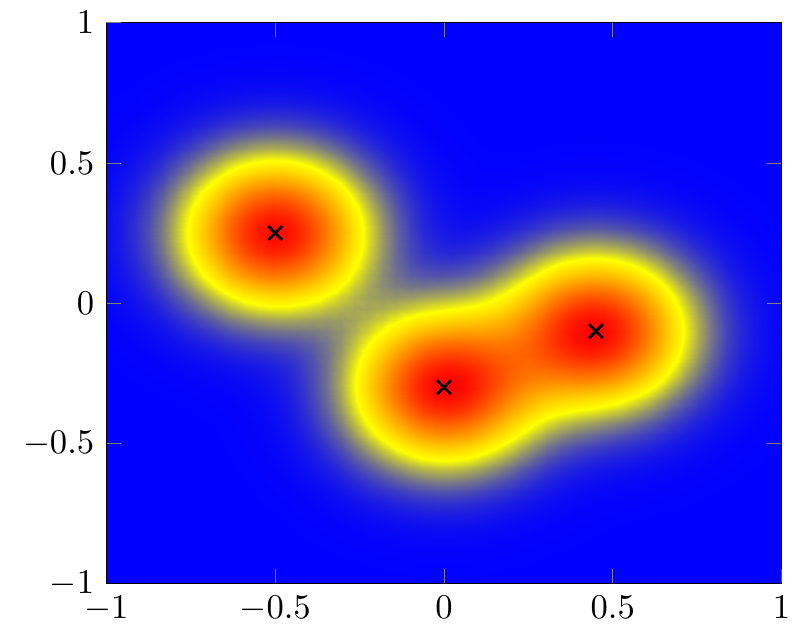}
\caption{\label{fig:spikedata2}}
\end{subfigure}
\begin{subfigure}[h]{0.3\textwidth}
\includegraphics[scale=\sdfraction]{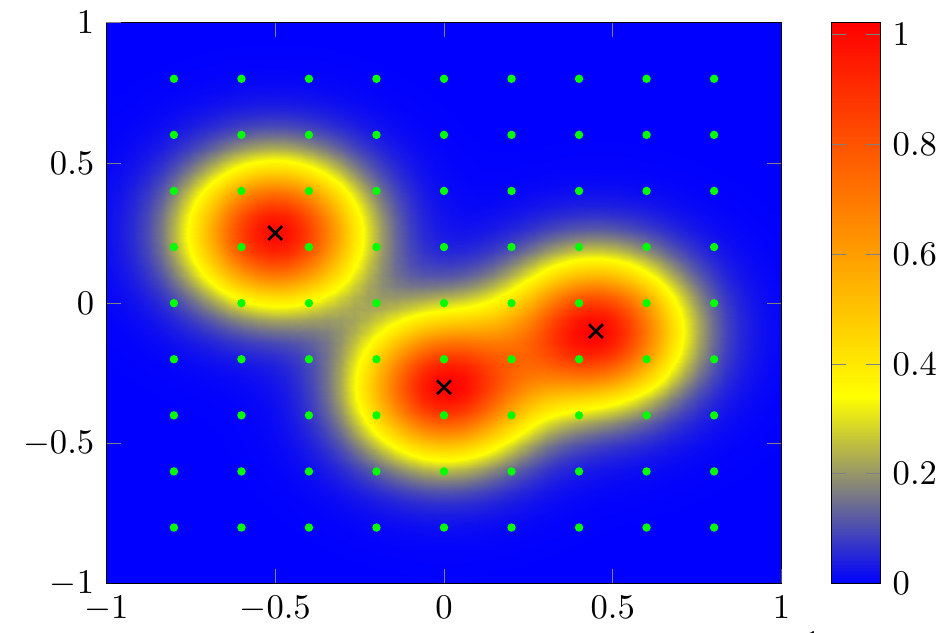}
\caption{\label{fig:spikedata3}}
\end{subfigure}
\caption[Deconvolution example]{Illustration of the deconvolution problem in two dimensions. \Cref{fig:spikedata1} shows a signal consisting of point sources represented by crosses. \Cref{fig:spikedata2} shows the convolution of the signal with a Gaussian kernel. \Cref{fig:spikedata3} shows a grid of sample locations in green.
}
\label{fig:spikedata}
\end{figure}

In the 1970s and 1980s, geophysicists working on reflection seismology developed numerical methods for the deconvolution problem based on $\ell_1$-regularized least squares \cite{claerbout,taylor1979deconvolution,chapman}.
The method works well in practice and has been applied in numerous settings such as marine seismic data \cite{chapman}, signal processing \cite{mallat1999wavelet}, and ultrasound imaging \cite{bendory64stable}.

In \cite{bernstein2018}, the authors developed a theoretical framework for analyzing deconvolution via $\ell_1$-norm minimization in one-dimension ($d=1$).  To allow for arbitrary support discretizations, they considered a continuous analog of the $\ell_1$-norm known as the total-variation (TV) norm \cite[Section~3.1]{folland}:
\begin{equation}
\begin{aligned}
\underset{\tilde{\mu}}{\op{minimize}} \quad& \normTV{ \tilde{\mu} } \\ \text{subject to}
\quad& (K*\tilde{\mu})(s_i) = y_i, \quad i=1,\ldots,n,
\end{aligned}
\label{pr:TVnormIntro}
\end{equation}
where $\tilde{\mu}$ is minimized over the space of signed measures.  Their main result \cite[Theorem 2.4]{bernstein2018} characterizes when the solution to problem~\eqref{pr:TVnormIntro} exactly recovers the signal $\mu$ for Gaussian kernels, and Ricker wavelets (a popular model for impulse responses in geophysics). Stated simply, the result shows that exact recovery is possible when the support is sufficiently separated, and each support element has two nearby samples.  The authors also show that $\ell_1$-norm minimization robustly solves the deconvolution problem when the samples are corrupted by different types of noise.
 
In this paper we extend the results of \cite{bernstein2018} to two dimensions.  This is significant because many applications of deconvolution, specifically imaging applications such as microscopy or telescopy involve two-dimensional data.
Our theory establishes that $\ell_1$-norm minimization achieves exact deconvolution of 2D point sources as long as there are three samples per source, and the sources are separated by a certain minimum distance, as in 1D. The proof relies on a dual-certificate construction, which can be used to derive robustness guarantees. \bdb{In contrast to the 1D case, proving the validity of the certificate in 2D for any possible configuration of sources with bounded minimum separation requires a careful geometric analysis, which is our main technical contribution.} The paper is structured as follows. \Cref{sec:MainResults} presents our main theoretical result, a theorem establishing exact recovery via convex programming for Gaussian deconvolution problems in two dimensions. \Cref{sec:Proof} describes the certificate construction used to prove the main result. Finally, in \Cref{sec:Numerical} we provide numerical experiments illustrating the performance of the method for the Gaussian kernel, and also for other two-dimensional kernels relevant to microscopy and telescopy.

\section{Main Results}\label{sec:MainResults}
Our main result is a sampling theorem for deconvolution via convex optimization in two dimensions.  We show that solving problem~\eqref{pr:TVnormIntro} achieves exact recovery under certain conditions on the spike and sample locations.  For concreteness and brevity, we fix $K$ to be the Gaussian kernel given by
\begin{equation}
    K(t) := \exp\brac{-\frac{\|t\|^2}{2\sigma^2}},
\end{equation}
where $\|t\|$ denotes the standard Euclidean norm on $\RR^2$.  Our results extend to other Gaussian-like kernels, with evidence given in \Cref{sec:Numerical}.

We assume the sample locations $s_1,\ldots,s_n\in\RR^2$ form a uniformly-spaced two-dimensional grid that surrounds the spike locations $t_j\in T$.  Uniform sampling is a natural choice when no prior assumptions are made on the spike locations.  \Cref{fig:spikedata} depicts an instance of the two dimensional Gaussian deconvolution problem with uniform samples.

Our main result, \Cref{thm:Exact}, shows that we can exactly recover $\mu$ when the spike locations are sufficiently separated and the sampling grid is dense enough.  In the next section we motivate these conditions.

\subsection{Minimum Separation and Grid Spacing}\label{sec:MinimumSeparationandGridSpacing}

Without assumptions on the underlying signals, deconvolution is an ill-posed problem. The numerical experiments in Section~\ref{sec:SVD} show that when two signals have very clustered supports, their difference may lie almost in the nullspace of the convolution operator (see also Section~2.1.1 in~\cite{bernstein2018} and Section 3.2 in \cite{superres} for more details). Following previous works on deconvolution~\cite{bernstein2018} and point-source super-resolution~\cite{superres}, we restrict our attention to signals with supports satisfying a \textit{minimum-separation} condition.
\begin{definition}\label{def:minsep}
The minimum separation of the support $T=\{t_1,\ldots,t_N\}\subset\RR^2$ of a signal is
\begin{equation}
\spikesep(T):=\min_{i\neq i'}\norm{t_i-t_{i'}}.
\end{equation}
\end{definition}
In \Cref{sec:SVD} we present numerical evidence that a minimum separation of at least $\sigma$ is needed for the deconvolution problem to be well-posed, in the sense that 
there exist pairs of signals with smaller minimum separation whose difference approximately lies in the nullspace of the 2D convolution operator. \bdb{Section~2.1.1 in~\cite{bernstein2018} provides an explicit example of two nonzero signals with small minimum separation that produce almost indistinguishable samples.}

Our ability to robustly solve the deconvolution problem depends on the relative location of the samples and the support of the true signal. For convolution kernels with decaying tails, like the Gaussian, samples that are too distant from the support contain almost no information. If the samples lie on an uniform grid, robust recovery is only possible if the grid spacing is small enough to ensure that there are always some measured samples close to the spike locations (see Section~2.1.2 in \cite{bernstein2018} for a more detailed discussion).

\begin{definition}
\label{def:gridsep}
The grid spacing $\gridsep>0$ of the sampling grid is the distance between consecutive sample points in both directions.  The samples on the grid can be expressed as
\begin{equation}
    s_1 + \gridsep\cdot(p,q),
\end{equation}
where $s_1\in\RR^2$, $p=0,\ldots,S_1-1$, $q=0,\ldots,S_2-1$, and $S_1,S_2$ are the grid dimensions.
\end{definition}
The minimum separation $\spikesep(T)$ and grid spacing $\gridsep$ are depicted in \Cref{fig:minsepgridsep}.

\begin{figure}[t!]
\centering
\includegraphics{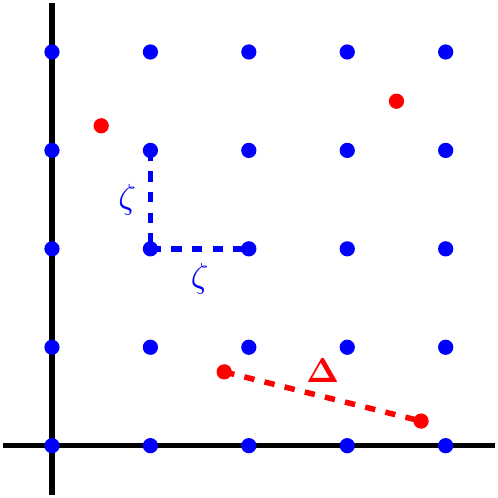}
\caption[Minimum Separation and Grid Spacing]{The minimum separation $\spikesep$ is the smallest distance between any pair of spikes (red points), while the grid spacing $\gridsep$ is the distance between the sample points (depicted in blue) in either direction.}
\label{fig:minsepgridsep}
\end{figure}

\subsection{Sampling Theorem for Exact Recovery}\label{sec:SamplingTheorem}

The main contribution of this paper is a sampling theorem that establishes $\ell_1$-norm minimization as an accurate method for deconvolution in two dimensions.
We make our analysis independent of the discretization of the signal support by considering the total variation (TV) norm for sparse measures, defined as
\begin{equation}
\normTV{\mu} = \sup_{f:|f|\leq 1}\int f\,d\mu,
\end{equation}
where the supremum is over continuous functions bounded by one in absolute value \cite{folland}.
This norm is analogous to the $\ell_1$ norm in discrete spaces. In fact, for atomic measures $\sum_{j=1}^N a_j\delta_{t_j}$ the TV norm is exactly the $\ell_1$ norm of the vector of coefficients, $\sum\abs{a_j}$.

We show that TV-norm minimization achieves exact deconvolution under certain conditions on the grid spacing and minimum separation.
Roughly speaking, if the grid spacing $\gridsep$ is slightly less than the standard deviation $\sigma$ of the convolution kernel $K$ and the support $T$ of the signal $\mu$ has a large enough minimum separation then $\mu$ is the unique solution of the convex program
\begin{equation}
\begin{aligned}
\underset{\tilde{\mu}}{\op{minimize}} \quad& \normTV{ \tilde{\mu} } \\ \text{subject to}
\quad& (K*\tilde{\mu})(s_i) = y_i, \quad i=1,\ldots,n.
\end{aligned}
\label{pr:TVnorm}
\end{equation}
\begin{theorem}[Proof in \Cref{sec:Proof}]\label{thm:Exact}
Let $\mu$ be a signal defined by \eqref{eq:signal}. The corresponding data are of the form \eqref{eq:samples}, where $K$ is the Gaussian kernel. Assume that the signal support has a minimum separation $\spikesep(T)$ and samples are measured on a square grid with spacing $\gridsep$, where the support of $\mu$ lies within the perimeter of the grid edges \bdb{and $\spikesep(T)$ and $\gridsep$ are expressed in units of $\sigma$}. If the pair $(\spikesep(T),\gridsep)$ lies in the orange region in \Cref{fig:RecoveryMapFirst},
then $\mu$ is the unique solution to problem \eqref{pr:TVnorm}.
\end{theorem}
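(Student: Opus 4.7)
The plan is to prove exact recovery by constructing an appropriate dual certificate. By standard convex duality for TV-norm minimization (cf.\ the 1D argument in \cite{bernstein2018}), $\mu=\sum_j a_j\delta_{t_j}$ is the unique solution of \eqref{pr:TVnorm} provided there exists a function of the form
\[
q(t)=\sum_{i=1}^n c_i\,K(s_i-t)
\]
satisfying $q(t_j)=\sign(a_j)$ for every $t_j\in T$ and $|q(t)|<1$ for every $t\in\RR^2\setminus T$. The entire proof is thus reduced to exhibiting such a $q$ under the hypotheses on $\spikesep(T)$ and $\gridsep$ encoded in \Cref{fig:RecoveryMapFirst}.

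To build $q$, for each spike $t_j$ I would select three nearby samples---chosen so that their convex hull anchors $t_j$---and impose three interpolation conditions at $t_j$: $q(t_j)=\sign(a_j)$ together with $\nabla q(t_j)=0$. These three scalar conditions per spike match the three coefficients $c_i$ per spike available from the chosen anchors, so the linear system for the $c_i$'s is square. Its invertibility, with controlled inverse, should follow from a perturbation argument: when the minimum separation $\spikesep(T)$ is large enough relative to $\sigma$, the Gaussian tails from the anchors of one spike contribute only a small amount to the conditions at another spike, so the full block matrix is a small perturbation of a block-diagonal one that is invertible by direct inspection. This mirrors the 1D construction of \cite{bernstein2018}, lifted to the 2D parameter count (value plus two partials).

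With $q$ in hand, I would then verify $|q(t)|<1$ off $T$ in two complementary regimes. Locally, near each $t_j$: since $q(t_j)=\pm 1$ and $\nabla q(t_j)=0$, a second-order Taylor expansion combined with a strict definiteness bound on $\nabla^2 q(t_j)$ (with the opposite sign of $a_j$) yields the strict inequality on a neighborhood of $t_j$. Globally, away from every spike: one sets up radial decay envelopes for the contribution of each anchor sample, observes that the anchors nearest to the point $t$ under consideration dominate the sum, and bounds cross-contributions from the anchors of other spikes using the minimum-separation hypothesis together with the rapid Gaussian decay.

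The hard part---and the principal geometric contribution highlighted in the introduction---will be the global bound in the ``mid-field'' region surrounding each spike, where neither the local Taylor expansion nor the far-field tail bound is sharp. In 1D one partitions $\RR\setminus T$ into intervals and argues monotonicity on each; in 2D the analogous region is an annulus around each spike on which $q$ is not radially monotone and on which the anchor configuration depends, in a discontinuous-looking way, on where $t_j$ sits within its grid cell. I would handle this by partitioning the annulus into angular sectors, deriving a direction-sensitive envelope for each anchor's contribution, summing these envelopes with explicit geometric packing arguments, and taking worst-case over the position of $t_j$ in a grid cell. The admissible orange region in \Cref{fig:RecoveryMapFirst} is precisely the set of $(\spikesep,\gridsep)$ for which these envelope bounds can be closed simultaneously; because the bounds involve nonelementary Gaussian sums they will be numerical rather than closed-form, which is why the recovery region is presented as a plotted domain.
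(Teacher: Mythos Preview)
Your overall strategy matches the paper's: dual certificate, three samples per spike with value-plus-gradient interpolation, a perturbation argument for invertibility, and a near/far split to verify $|q|<1$ off $T$. Two technical choices differ from the paper and are worth flagging.

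First, the paper makes the block-diagonal structure explicit by a change of basis to \emph{bumps and waves}: for each spike it forms three linear combinations $B_j,W_j^1,W_j^2$ of the three anchor Gaussians so that $B_j(t_j)=1$, $\nabla B_j(t_j)=0$, and $W_j^i$ picks out one partial derivative at $t_j$. In this basis the $3|T|\times 3|T|$ system has identity on the diagonal by construction, and the off-diagonal entries are values/derivatives of bumps and waves at \emph{other} spikes. This is what lets one write clean $\|\cdot\|_\infty$ bounds on the nine block matrices and apply a Schur-complement argument; your ``small perturbation of block-diagonal'' is the same idea, but without this reparametrization the direct-inspection invertibility of the diagonal blocks and the uniform control of the off-diagonal ones are harder to state.

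Second, and more substantively, the geometric packing argument in the paper is not angular sectors with direction-sensitive envelopes. Instead the paper tiles $\RR^2$ by regular hexagons of side $\spikesep(T)/2$ centered at the spike in question; the minimum-separation condition forces at most one spike per hexagonal cell, so the $\ell_\infty$ row-sum bounds become finite sums of \emph{radially symmetric} envelope values over the (finitely many relevant) cells, with a tail estimate for layers $\ge 9$. The mid-field region is then handled along a single radial ray (rotational invariance of the envelopes) via a calculus lemma that stitches a Hessian-concavity zone, a negative-directional-derivative zone, and a direct $|Q|<1$ zone. Your angular-sector-plus-direction-sensitive-envelope plan could plausibly be made to work, but it is a different and likely more involved route; the hexagonal pigeonhole with rotation-invariant envelopes is what actually closes in the paper and is what produces the numerical recovery region.
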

\begin{figure}[t!]
\begin{subfigure}{0.5\textwidth}
\centering
\includegraphics[scale=0.5]{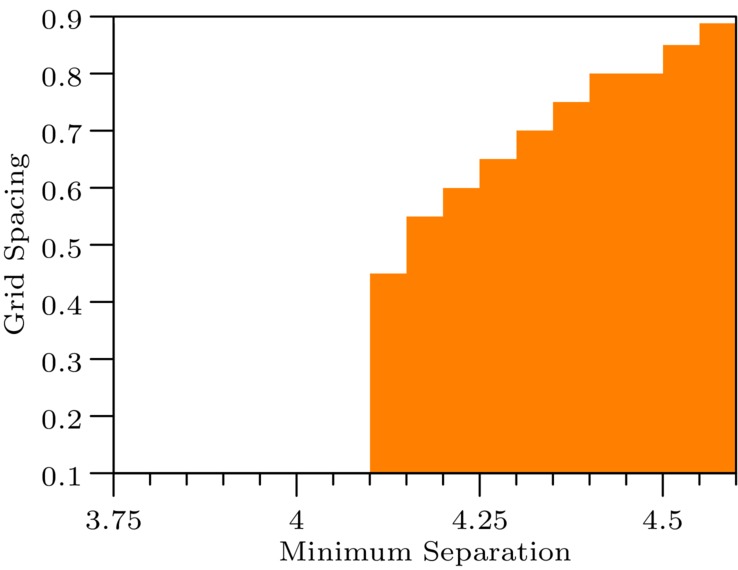}
\caption{}
\label{fig:RecoveryMapFirst}
\end{subfigure}
\begin{subfigure}{0.5\textwidth}
\centering
\includegraphics[scale=0.5]{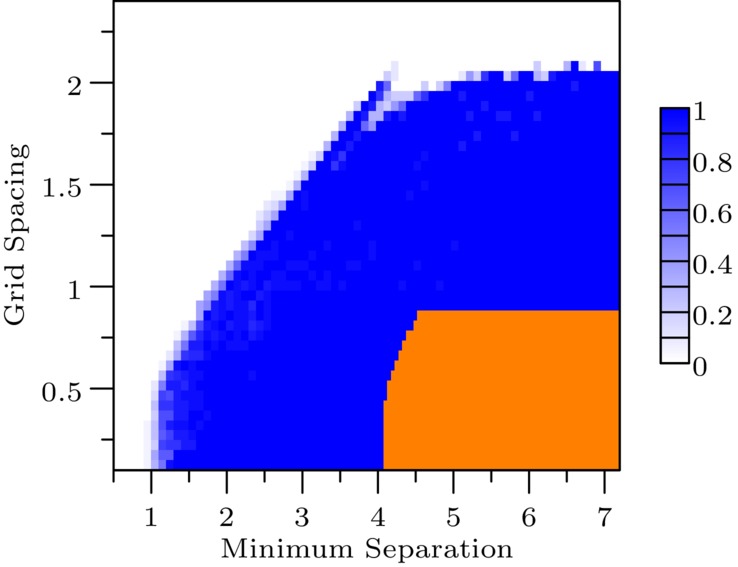}
\caption{}
\label{fig:ExactHeatMapAug}
\end{subfigure}
\caption[Recovery with $\ell_1$ minimization]{\Cref{fig:RecoveryMapFirst} shows the region where our theory guarantees exact recovery via convex programming for a range of minimum separation and grid spacing values (in units of $\sigma$). \Cref{fig:ExactHeatMapAug} shows the theoretical region superimposed on the region where we observe exact recovery numerically (see \Cref{sec:ExactSimulation}).}
\label{fig:RecoveryFigures}
\end{figure}
\Cref{thm:Exact} is a sampling theorem for deconvolution in two dimensions, providing exact recovery guarantees that only depend on the minimum separation and the resolution of the sampling grid.
\bdb{\Cref{tab:RecoveryTable} in \Cref{sec:RecoveryTable} provides a quantitative description of the boundaries of the recovery region shown in \Cref{fig:RecoveryMapFirst}.}
\Cref{fig:ExactHeatMapAug} compares our theoretical guarantees with the empirical performance of the method (see \Cref{sec:Numerical} for a description of the numerical experiments).
We prove \Cref{thm:Exact} by establishing the existence of a dual-feasible vector, known as a \emph{dual certificate} in the literature, as described in \Cref{sec:Proof}. 

An immediate corollary of \Cref{thm:Exact} is a recovery guarantee for $\ell_1$-norm minimization in a discretized setting, where the signal lies on a predefined grid.

\begin{restatable}[Proof in \Cref{sec:DualCertProof}]{corollary}{DiscreteTV}\label{cor:DiscreteTV}
Assume that the support $T$ of the measure $\mu$ in equation \eqref{eq:signal} lies on a known discretized grid $G\subset\RR^2$, and that the data are of the form \eqref{eq:samples}, where $K$ is the Gaussian kernel. Then if the minimum separation $\spikesep(T)$ and grid spacing $\gridsep$ satisfy the conditions of \Cref{thm:Exact}, the coefficients $a_1,\ldots,a_{\abs{G}}$ are the unique solution to \begin{equation}
\begin{aligned}
\underset{\tilde{a}\in\RR^{|G|}}{\op{minimize}} \quad& \normone{ \tilde{a} } 
\\ \text{subject to}
\quad& \sum_{t_j\in \bdb{G}} \tilde{a}_jK(s_i-t_j) = y_i, \quad i=1,\ldots,n.
\end{aligned}
\label{pr:TVnormDisc}
\end{equation}
\end{restatable}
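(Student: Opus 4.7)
The plan is to deduce the corollary from \Cref{thm:Exact} by viewing problem \eqref{pr:TVnormDisc} as the restriction of the continuous TV-norm problem \eqref{pr:TVnorm} to measures whose support is contained in the finite grid $G$. The key observation is that any feasible $\tilde{a} \in \RR^{|G|}$ for the discretized program corresponds to the atomic signed measure $\tilde{\mu} := \sum_{t_j \in G} \tilde{a}_j \delta_{t_j}$, which is feasible for \eqref{pr:TVnorm} and satisfies $\normTV{\tilde{\mu}} = \normone{\tilde{a}}$. Thus \eqref{pr:TVnorm} is a convex relaxation of \eqref{pr:TVnormDisc}, and uniqueness in the relaxation will transfer to uniqueness in the restriction.

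First I would note that since $T \subseteq G$, the true coefficient vector $a$ (extended by zeros on $G \setminus T$) is itself feasible for \eqref{pr:TVnormDisc}, and corresponds to $\mu$ under the identification above, with $\normone{a} = \normTV{\mu}$. Next, let $\hat{a}$ be any optimal solution of \eqref{pr:TVnormDisc} (existence is standard, since it is a finite-dimensional $\ell_1$-minimization with affine constraints and nonempty feasible set). The associated measure $\hat{\mu}$ is feasible for \eqref{pr:TVnorm}, so by the uniqueness conclusion of \Cref{thm:Exact} we have $\normTV{\hat{\mu}} \geq \normTV{\mu}$, with equality only if $\hat{\mu} = \mu$.

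The two optimality conditions then pinch: on the one hand, $\normone{\hat{a}} \leq \normone{a} = \normTV{\mu}$ because $\hat{a}$ is optimal for \eqref{pr:TVnormDisc} and $a$ is feasible; on the other hand, $\normone{\hat{a}} = \normTV{\hat{\mu}} \geq \normTV{\mu}$ by the previous paragraph. Therefore $\normTV{\hat{\mu}} = \normTV{\mu}$, and the uniqueness clause of \Cref{thm:Exact} forces $\hat{\mu} = \mu$. Since $\hat{\mu}$ and $\mu$ are atomic measures supported on the fixed discrete grid $G$, equality of measures translates directly into equality of coefficient vectors $\hat{a} = a$.

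There is no serious obstacle here; the entire argument is a routine lifting from the discretized $\ell_1$ setting to the continuous TV setting. The only technical care needed is the straightforward verification that the map $\tilde{a} \mapsto \tilde{\mu}$ is norm-preserving and constraint-preserving between \eqref{pr:TVnormDisc} and \eqref{pr:TVnorm}, which follows directly from the definition of the TV norm on atomic measures noted after \eqref{pr:TVnorm}.
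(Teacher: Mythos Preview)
Your proposal is correct and follows essentially the same approach as the paper: both recognize that problem \eqref{pr:TVnormDisc} is the restriction of the continuous TV-norm problem \eqref{pr:TVnorm} to atomic measures supported on $G$, and then use the uniqueness of $\mu$ in \eqref{pr:TVnorm} guaranteed by \Cref{thm:Exact} (together with $T\subseteq G$) to conclude uniqueness in the discrete problem. Your write-up simply makes the sandwich inequality $\normTV{\mu}\leq\normone{\hat a}=\normTV{\hat\mu}\leq\normTV{\mu}$ more explicit than the paper's terse version.
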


\subsection{Noisy Measurements and Discretization Errors}
In practice, real measurements are corrupted by noise.  We can account for noisy measurements
by perturbing equation~\eqref{eq:samples} with an additive noise vector $z\in\RR^n$:
\begin{equation}
    \hat{y}_i = z_i + \sum_{t_j\in T}a_j K(s_i-t_j),\quad i=1,\ldots,n.
\end{equation}
To adapt problem~\eqref{pr:TVnorm} to noisy measurements, we relax the 
data consistency constraint from an equality to an inequality:
\begin{equation}
\begin{aligned}
\underset{\tilde{\mu}}{\op{minimize}} \quad& \normTV{ \tilde{\mu} } \\ \text{subject to}
\quad& \sum_{i=1}^n( (K*\tilde{\mu})(s_i) - \hat{y}_i)^2\leq\xi^2,
\end{aligned}
\label{pr:TVnormNoise}
\end{equation}
where $\xi>0$ is a parameter that must be tuned to the level of noise.  Combining the arguments in \cite{bernstein2018,support_detection} with our dual-certificate construction in \Cref{sec:Proof} yields robustness guarantees for recovering $\mu$ in high signal-to-noise settings. We omit the details for brevity.

In contrast with \Cref{cor:DiscreteTV}, the true support of $\mu$ may not lie on a known discretized grid.  The same techniques used to derive robustness guarantees for additive noise can also give some control over the discretization error.  Sharpening these guarantees is an interesting direction for future research.

\subsection{Related Work}
As mentioned in the introduction, to the best of our knowledge $\ell_1$-norm minimization for deconvolution was originally proposed in the 1970s by researchers from geophysics~\cite{taylor1979deconvolution,claerbout,levy,santosa,debeye1990lp}.
The first theoretical results analyzed random convolution kernels~\cite{hauptToeplitz,romberg2009compressive} using techniques from compressed sensing~\cite{Candes:2005cs,donoho2006compressed}.
\bdb{\cite{decastro} introduced total-variation minimization as a method for recovering sparse signed measures from their generalized moments.}
The dual certificates used in compressed sensing are not directly applicable to our deterministic kernels because the corresponding linear operators do not satisfy incoherence conditions (see \cite{separable} for a more detailed explanation).
The style of proof employed here first appeared in~\cite{superres,superres_new} 
to establish exact recovery guarantees for super-resolution problems satisfying a minimum separation condition.  Subsequent papers build on these results to study noise~\cite{tang2014robust,support_detection,superres_noisy,peyreduval}, missing data~\cite{tang2012offgrid}, and outliers~\cite{fernandez2017demixing}.  

\bdb{Deconvolution via convex programming has been studied in one dimension in \cite{bernstein2018} for arbitrary sampling patterns and in \cite{schiebinger,eftekhari2018sparse,eftekhari2018stable} for nonnegative signals.} Previous works have analyzed two-dimensional deconvolution for randomized measurements~\cite{poon2018dual}, limiting cases of spike arrangements~\cite{poon}, in settings without discrete samples~\cite{bendory64stable,bendory2016robust},
and as part of the larger class of separable nonlinear problems \cite{separable}.
Our proof most closely follows the techniques in \cite{bernstein2018} for the one-dimensional setting.  The key difference \bdb{is the difficulty in characterizing how spikes may cluster in signals with a fixed minimum separation, which requires a careful geometric analysis described in \Cref{sec:Geometry,sec:Norms,sec:qboundproof}. This complication does not arise in one dimension. An alternative proof strategy for the 2D deconvolution problem would build upon the techniques in \cite{separable}, which provides qualitative guarantees through a certificate based on the correlation structure of the measurement operator. This makes it possible to obtain results for a wider variety of measurement operators, sacrificing precise constants. In contrast, in this work our goal is to derive sharper guarantees, which requires a tailored certificate construction.}

\bdb{The finite-rate-of-innovation (FRI)
framework~\cite{vetterli2002sampling,dragotti2007sampling,uriguen2013fri}, an approach to signal recovery inspired by Prony's method~\cite{deProny:tg}, provides an alternative framework to tackle deconvolution problems. In one dimension \cite{vetterli2002sampling} showed that this technique achieves exact deconvolution of point sources without a minimum-separation condition and without discretizing the parameter space, but does not provide robustness guarantees. As explained in Section~\ref{sec:MinimumSeparationandGridSpacing}, such guarantees would require conditions on the signal support.}
\bdb{These results have been extended to multi-dimensional settings in \cite{maravic,shukla}. The methodology is based on annihilating filters designed to recover 2D signals such as superpositions of points sources, lines, and polygons. 
More recently, \cite{pan2016towards} introduced a method to reconstruct FRI signals using nonuniform sampling patterns, 
and applied it to radio interferometry problems. 
}


\section{Proof of \Cref{thm:Exact}}\label{sec:Proof}
In the proof of \Cref{thm:Exact} we use a standardized Gaussian kernel with $\sigma=1$:
\begin{equation}
K(t):=\exp\brac{-\frac{\norm{t}^2}{2}}
\end{equation}
without loss of generality. This is equivalent to expressing $t$ in units of $\sigma$. Some parts of the proof require computations implemented using Mathematica code, which is available at \url{https://github.com/jpmcd/Deconvolution2D}.

\subsection{Dual Certificate}\label{sec:DualCertificate}
We prove \Cref{thm:Exact} by establishing the existence of a function that guarantees exact recovery:
\begin{restatable}[Proof in \Cref{sec:DualCertProof}]{proposition}{DualCert}
\label{prop:DualCert}
Let $T \subseteq \RR^2$ be the support of a signal $\mu$ of the
form~\eqref{eq:signal} and let $S=\{s_i\}$ be the set of sample points from a
sampling grid on $\RR^2$.
If for any sign pattern $\signs\in\{-1,1\}^{|T|}$
there exists a function of the form
\begin{align}
\tQ(t) := \sum_{i=1}^n \tq_i K(s_i-t) \label{eq:dualcomb_K}
\end{align}
with $\tq\in\RR^n$ satisfying
\begin{alignat}{2}
 & \tQ(t_j) = \signs_j, \qquad && \forall t_j \in
  T, \label{eq:conditionQT}\\ 
  & \abs{\tQ(t)} < 1, && \forall t \in
  T^c, \label{eq:conditionQTc}
\end{alignat}
then the unique solution to problem~\eqref{pr:TVnorm} is $\mu$.
\end{restatable}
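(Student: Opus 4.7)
The plan is the standard dual-certificate argument. First I would fix $\signs_j = \sign(a_j)$ (one of the allowed sign patterns) and let $\tQ$ be the associated certificate with coefficients $\tq$. For any feasible $\tilde{\mu}$, Fubini gives the pairing identity
\begin{equation}
\int \tQ\,d\tilde{\mu} = \sum_{i=1}^n \tq_i (K * \tilde{\mu})(s_i) = \sum_{i=1}^n \tq_i y_i,
\end{equation}
which depends only on the data. Evaluating at $\mu$ and using \eqref{eq:conditionQT} shows this common value equals $\sum_j a_j \signs_j = \normTV{\mu}$. Since $\abs{\tQ} \leq 1$ by \eqref{eq:conditionQT} and \eqref{eq:conditionQTc}, the pairing is bounded by $\normTV{\tilde{\mu}}$, so $\normTV{\mu} \leq \normTV{\tilde{\mu}}$, giving optimality.

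Next, for uniqueness I would examine the equality case. Suppose $\tilde{\mu}$ is also optimal and write $\tilde{\mu} = \tilde{\mu}^+ - \tilde{\mu}^-$ via Hahn--Jordan. Equality in the pairing bound forces
\begin{equation}
\int (1 - \tQ)\,d\tilde{\mu}^+ + \int (1 + \tQ)\,d\tilde{\mu}^- = 0.
\end{equation}
Both integrands are nonnegative, so $\tQ = 1$ holds $\tilde{\mu}^+$-a.e.\ and $\tQ = -1$ holds $\tilde{\mu}^-$-a.e. The strict inequality $\abs{\tQ} < 1$ on $T^c$ from \eqref{eq:conditionQTc} then forces both $\tilde{\mu}^+$ and $\tilde{\mu}^-$ to be supported on $T$, yielding $\tilde{\mu} = \sum_j c_j \delta_{t_j}$ with $\sign(c_j) = \signs_j$ whenever $c_j \neq 0$. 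Feasibility then reads $\sum_j (c_j - a_j) K(s_i - t_j) = 0$ for every $i$.

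Finally, to pin down $c_j = a_j$, I would exploit the \emph{full} strength of the hypothesis, namely that a certificate exists for every sign pattern $\signs \in \{-1,1\}^{|T|}$. This says the linear map $\tq \mapsto (\tQ(t_j))_j$ has image containing all $2^{|T|}$ vertices of the hypercube, and these span $\RR^{|T|}$, so the map is surjective. Equivalently, the $|T| \times n$ matrix with entries $K(s_i - t_j)$ has rank $|T|$, so the measurement operator restricted to measures supported on $T$ is injective; this gives $c_j = a_j$ and therefore $\tilde{\mu} = \mu$.

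The reasoning above is really bookkeeping around a pairing identity; the substantive difficulty of the paper lies elsewhere, in constructing a $\tQ$ satisfying \eqref{eq:conditionQT} and \eqref{eq:conditionQTc}. Within this proposition, the subtlest point is the Hahn--Jordan step: one must invoke the strict inequality $\abs{\tQ} < 1$ on $T^c$ (not merely $\abs{\tQ} \leq 1$) to rule out residual mass of the signed measure $\tilde{\mu}$ outside $T$, and then separately leverage the ``for any sign pattern'' clause to upgrade support localization into full coefficient recovery.
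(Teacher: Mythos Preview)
Your argument is correct, but it takes a different route from the paper's. The paper works with the \emph{difference} $h=\nu-\mu$ of a feasible $\nu$ and $\mu$, applies the Lebesgue decomposition $h=h_T+h_{T^c}$ with $h_T=\sum_j b_j\delta_{t_j}$, and then invokes the ``for any sign pattern'' hypothesis by choosing $\tQ$ with $\tQ(t_j)=\sign(b_j)$; this choice makes $\int \tQ\,dh_T=\|h_T\|_{TV}$ directly, so $\|\nu\|_{TV}\geq\|\mu\|_{TV}+\|h_{T^c}\|_{TV}+\int\tQ\,dh_{T^c}$, with strict inequality unless $h_{T^c}=0$, after which $\|h_T\|_{TV}=0$ falls out immediately. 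You instead fix $\signs_j=\sign(a_j)$ once, prove optimality via the pairing, localize the support of any optimal competitor via Hahn--Jordan, and then---separately---convert the ``all sign patterns'' hypothesis into the rank statement that $[K(s_i-t_j)]$ has full row rank, giving injectivity on measures supported on $T$. The paper's route is slightly shorter because the sign-matching trick on $h_T$ collapses the uniqueness step into one line; your route has the virtue of making explicit that the hypothesis forces the kernel matrix restricted to $T$ to be injective, which is a clean structural consequence worth noting.
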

The proposition establishes that exact recovery is guaranteed by the existence of an \textit{interpolation function} $\tQ$ that interpolates the sign pattern at the signal's support $T$ using scaled copies of the convolution kernel centered at the sample points in $S$. 
An interpolation function for an example with three spikes is depicted in \Cref{fig:ContourPlot}.
The vector $\tq$ is known as a {\em dual certificate} since it is a feasible solution for the dual of problem~\eqref{pr:TVnorm}:
\begin{equation}
\begin{aligned}
\underset{\nu}{\op{maximize}}\quad&\nu^Ty\\
\text{subject to}\quad&\sup_t\abs{\sum_{i=1}^n \nu_iK(s_i-t)}\leq 1.
\end{aligned}\label{pr:DualCert}
\end{equation}
Dual certificates have been widely used to derive recovery guarantees for convex-programming approaches in compressed sensing \cite{candes2006robust}, matrix completion~\cite{candes2012} and phase retrieval \cite{candes2015phase}.

\begin{figure}[t!]
\centering
\includegraphics[scale=0.75]{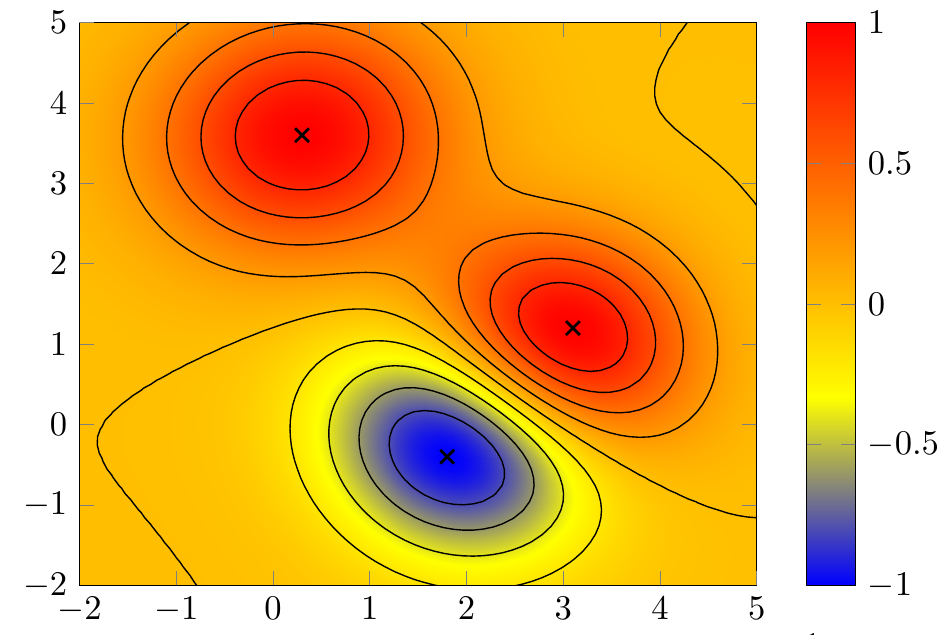}
\caption[Example of a Dual Certificate]{Contour plot of an example of the interpolating function defined in \Cref{prop:DualCert}. The function is a linear combination of copies of the convolution kernel centered \bdb{at samples on a grid with $\gridsep=0.5$}. It interpolates the sign pattern of signal consisting of three spikes and its magnitude is bounded by one.}
\label{fig:ContourPlot}
\end{figure}

By \eqref{eq:conditionQT}, any function $\tQ$ satisfying the conditions of \Cref{prop:DualCert} must interpolate the sign pattern $\signs$ on $T$.
To satisfy condition \eqref{eq:conditionQTc}, $\tQ$ must also have a local extremum at each element of the support:
\begin{equation}\label{eq:conditionQderiv}
\nabla \tQ(t_j)=0\qquad \forall t_j\in T.
\end{equation}
Combining the interpolation and derivative conditions on the support gives a system of $3\abs{T}$ scalar equations, which we refer to as the \textit{interpolation equations}:
\begin{equation}\begin{aligned}\label{eq:Interpolation}
\tQ(t_j) &= \signs_j,\\ 
\nabla \tQ(t_j)&=0,\qquad \forall t_j \in T.
\end{aligned}\end{equation}
The following lemma
establishes the existence of a function satisfying the interpolation equations when the assumptions of \Cref{thm:Exact} are met.
\begin{lemma}[Proof in \Cref{sec:InvertibilityProof}]
\label{lem:Invertibility}
Under the assumptions of \Cref{thm:Exact}, the system of equations \eqref{eq:Interpolation} has a solution.
\end{lemma}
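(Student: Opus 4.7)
The plan is to parameterize $\tQ$ as a small linear combination of template functions localized at each spike, turning the interpolation equations into a $3|T|\times 3|T|$ linear system for the coefficients. Concretely, for every $t_j\in T$ I would introduce a \emph{bump} template $B_j(t)=\sum_i b_i^{(j)}K(s_i-t)$ whose coefficient pattern on the sampling grid is designed so that $B_j$ is approximately a Gaussian-like bump centered at $t_j$ (value close to $1$, gradient close to $0$, and rapid decay away from $t_j$), together with two \emph{wave} templates $W_j^1(t),W_j^2(t)$ of the same form, engineered so that $W_j^k$ has a nonzero $k$-th partial derivative at $t_j$ while the value and the other partial are negligible. The ansatz
\[
\tQ(t)=\sum_{t_j\in T}\bigl[\bcoeff_j B_j(t)+\wocoeff_j W_j^1(t)+\wtcoeff_j W_j^2(t)\bigr]
\]
automatically has the form \eqref{eq:dualcomb_K}, and substituting into the interpolation equations \eqref{eq:Interpolation} yields the block linear system
\[
\begin{pmatrix}\BB&\WWo&\WWt\\ \DoB&\DoWo&\DoWt\\ \DtB&\DtWo&\DtWt\end{pmatrix}\begin{pmatrix}\bcoeff\\ \wocoeff\\ \wtcoeff\end{pmatrix}=\begin{pmatrix}\signs\\ 0\\ 0\end{pmatrix},
\]
where each $|T|\times|T|$ block collects the relevant values or partial derivatives of the templates at the spike locations. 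It suffices to prove this matrix is invertible.

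To build the templates I would exploit the product structure $K(t)=e^{-t_1^2/2}e^{-t_2^2/2}$ to tensorize the one-dimensional bump and wave constructions of \cite{bernstein2018}: a suitable pattern of nearby grid samples in one coordinate already produces a function close to a Gaussian, and taking tensor products in the two coordinate directions yields templates whose value, gradient, and Hessian at the center $t_j$ are explicit and easy to tune. With appropriate normalization, each of the three diagonal $|T|\times|T|$ blocks will be close to a multiple of the identity and each off-diagonal block close to zero in operator norm, so the $3|T|\times 3|T|$ system matrix is a small perturbation of a block-diagonal invertible matrix. Invertibility then follows from a Gershgorin/Neumann-series argument, with some of the constants evaluated with the accompanying Mathematica scripts.

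The main obstacle is obtaining uniform bounds on the off-diagonal and diagonal perturbations across all spike configurations satisfying the minimum-separation hypothesis. Each off-diagonal matrix entry $B_j(t_k)$ (and analogously for the wave and derivative templates with $k\neq j$) decays Gaussianly in $\|t_k-t_j\|$, but the resulting operator norm depends on how many spikes can pile up in each annular shell around a fixed $t_j$. In one dimension at most two spikes sit at distance $\spikesep(T)$; in the plane, hexagonal packing already allows up to six nearest neighbors, with the count growing in larger shells, so I would need a careful geometric enumeration of the worst-case spike arrangements inside annuli of the form $\{r\leq\|t-t_j\|<r+\spikesep(T)\}$. Combining this combinatorial count against the Gaussian decay of the template values, and verifying that the resulting tail bound is strictly smaller than the diagonal contribution whenever $(\spikesep(T),\gridsep)$ lies in the orange region of \Cref{fig:RecoveryMapFirst}, is the quantitative analysis carried out in \Cref{sec:Geometry,sec:Norms,sec:qboundproof}, and is where the bulk of the technical work lives.
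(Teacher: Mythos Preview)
Your overall strategy---parametrize $\tQ$ via per-spike bump and wave templates, reduce the interpolation equations to a nearly-diagonal $3|T|\times 3|T|$ block system, and prove invertibility by a Neumann-series/perturbation argument with a geometric packing count controlling the off-diagonal sums---is exactly the route the paper takes. Two execution choices differ from yours and are worth noting. First, rather than tensorizing one-dimensional templates, the paper builds $B_j,W_j^1,W_j^2$ directly from the \emph{three} grid samples nearest $t_j$ by solving a $3\times3$ system so that the local conditions $B_j(t_j)=1$, $\nabla B_j(t_j)=0$, etc., hold \emph{exactly}; consequently the diagonal of the $3|T|\times3|T|$ matrix is exactly the identity, and only the inter-spike entries need to be bounded (your ``close to a multiple of the identity'' is unnecessary slack). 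Second, the geometric count is carried out not with annular shells but with a hexagonal tiling of side $\spikesep(T)/2$ centered at each $t_j$: since any two points in one hexagon are closer than $\spikesep(T)$, each cell contains at most one spike, which turns the worst-case enumeration into a finite sum over cells in the inner eight layers plus a uniform tail bound. The invertibility itself is established via Schur complements (\Cref{lem:SchurInvEasy}) rather than a raw Gershgorin estimate, which also yields the coefficient bounds needed later for \Cref{lem:QBound}.
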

Below we outline the proof of \Cref{lem:Invertibility}, a 
two-dimensional analog of the argument given in \cite{bernstein2018}. To begin, we restrict our focus to the set of $3|T|$ samples obtained by choosing the three samples closest to each spike.  This allows us to express the interpolations equations \eqref{eq:Interpolation} as a $3|T|\times 3|T|$ linear system.
\bdb{Note that at least three samples are needed to determine an individual spike because it is encoded by three parameters: its amplitude and its two-dimensional location.} 

In \Cref{sec:BumpsWaves} we apply a change of basis that approximately diagonalizes this system, and allows us to state conditions guaranteeing invertibility in \Cref{sec:InvertibilityProof}.
In contrast with the one-dimensional case, establishing these invertibility conditions requires a geometric argument that we explain in \Cref{sec:Geometry}.
In \Cref{sec:Norms} we complete the proof of \Cref{lem:Invertibility} by showing that the linear system is invertible under the conditions of \Cref{thm:Exact}.

The proof of \Cref{lem:Invertibility} sketched above yields an explicit interpolation function candidate $Q$ that solves the system \eqref{eq:Interpolation}.  The following lemma shows that this candidate satisfies condition \eqref{eq:conditionQTc}, thereby establishing \Cref{thm:Exact}.
\begin{lemma}[Proof in \Cref{sec:qboundproof}]
\label{lem:QBound}
Under the assumptions of \Cref{thm:Exact}, the interpolation function $Q$
solving system~\eqref{eq:Interpolation} guaranteed by \Cref{lem:Invertibility} satisfies $\abs{ Q(t) } <1$ for all $t\in T^c$.
\end{lemma}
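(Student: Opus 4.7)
The plan is to partition $T^c$ according to the distance from the nearest spike and bound $|Q|$ in each region with a tailored argument. Since the interpolation equations \eqref{eq:Interpolation} already give $\tau_j Q(t_j)=1$ with $\nabla Q(t_j)=0$, the task reduces to showing (i) a strict local maximum of $\tau_j Q$ at each $t_j$ on a disk $B_r(t_j)$ of some explicit radius $r>0$, and (ii) $|Q(t)|<1$ everywhere outside $\bigcup_j B_r(t_j)$.

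For the near-spike bound (i), I would establish that the Hessian $\nabla^2(\tau_j Q)(t_j)$ is strictly negative definite, with a quantitative bound on its largest eigenvalue. The Hessian at $t_j$ is an explicit linear combination of second derivatives of $K$ evaluated at the three samples used to interpolate near $t_j$, with weights given by the entries of $\tq$ produced by \Cref{lem:Invertibility}. The bump-and-wave decomposition underlying that lemma supplies quantitative bounds on those entries, which propagate directly to an eigenvalue bound. A second-order Taylor expansion, together with a bound on the third derivatives of $Q$ on a fixed disk (obtained from Gaussian derivative bounds and $\|\tq\|_\infty$), then yields an inequality of the form $\tau_j Q(t)\leq 1-c\,\|t-t_j\|^2$ on $B_r(t_j)\setminus\{t_j\}$ for some $c>0$ and some explicit $r$.

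For the far-field portion of (ii), where $t$ is separated from every spike by more than some moderate threshold, I would rely on Gaussian decay: each summand $\tq_i K(s_i-t)$ is at most $|\tq_i|\exp(-\|s_i-t\|^2/2)$, and the minimum-separation condition bounds how many of the $3|T|$ sample centers used to form $Q$ can lie within any given annulus. Summing these contributions against the uniform coefficient bound from \Cref{lem:Invertibility} gives a bound that decays like a Gaussian in the distance to the support and falls strictly below $1$ once that distance exceeds an explicit threshold.

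The hard part will be the intermediate region, an annular zone around each spike cluster where the Taylor expansion no longer dominates but the Gaussian tails are not yet negligible. Here the rich variety of admissible spike geometries, controlled in \Cref{sec:Geometry,sec:Norms}, becomes a real obstacle: one must certify $|Q|<1$ uniformly over all spike configurations meeting the minimum-separation hypothesis and all sign patterns $\signs$. My approach would be to cover the intermediate region with a sufficiently fine mesh, evaluate $Q$ pointwise, and close the argument using a Lipschitz bound on $Q$ derived from an $L^\infty$ bound on $\nabla Q$ (again obtained from bounds on $\tq$ and derivatives of $K$). Because, by the minimum-separation hypothesis, only finitely many spikes can contribute non-negligibly near any point, the verification decomposes into finitely many geometric cases, which is what the Mathematica computations referenced at the beginning of \Cref{sec:Proof} are designed to check rigorously; combined with the local and far-field analyses, this completes the bound $|Q(t)|<1$ on all of $T^c$.
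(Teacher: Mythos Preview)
Your three-region outline (near, intermediate, far) matches the paper's architecture, but the mechanisms you propose in each region differ from what the paper actually does, and one of those differences hides the main difficulty.

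Near the spike, you invoke a second-order Taylor expansion with third-derivative remainder control. The paper instead uses the integral identity in \Cref{lem:Regions}: along every ray from $t_1$ it bounds $\int_0^r (v^T\nabla^2 Q\, v)(r-s)\,ds$ on $(0,u_1]$, then $\int_0^{u_1}(\cdot)+\int_{u_1}^r \nabla Q\cdot \hat t\,ds$ on $(u_1,u_2]$, and finally $|Q|<1$ directly on $[u_2,\spikesep]$. This avoids third derivatives entirely and lets the concavity zone extend as far as the integrated Hessian eigenvalue stays negative, which is sharper than a pointwise Hessian bound at $t_1$ alone.

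For the far field, you sum $|\tq_i|K(s_i-t)$ with a uniform bound on $\tq$. The paper never bounds the raw kernel coefficients $\tq$; it stays in the bump/wave parametrization and uses the matrix norm bounds of \Cref{lem:normbound} applied to the enlarged support $T'=T\cup\{t\}$ (which still has minimum separation $\spikesep(T)$). This is cleaner because the wave coefficients scale like $1/\gridsep$, so $\|\tq\|_\infty$ is not small.

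The serious gap is in your intermediate region. You propose to ``evaluate $Q$ pointwise'' on a 2D mesh and close with a Lipschitz bound, saying the verification ``decomposes into finitely many geometric cases.'' But $Q$ depends on the \emph{entire} configuration $T$ and the sample offsets, so there is no finite set of cases to check: one must bound $|Q(t)|$ uniformly over a continuum of admissible configurations. The paper solves this with two devices you have not mentioned. First, the radially symmetric envelope functions $\enva{B}$, $\enva{W^i}$, $\env{D(B)}$, $\env{\lambda(B)}$ of \Cref{sec:BWBounds}, which are precomputed suprema over all sample configurations, reduce the dependence on local geometry to a function of $\|t-t_j\|$ only. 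Second, the hexagonal partition of \Cref{sec:Geometry} converts the sum over the unknown spike set into a sum over cells with known worst-case distances $d_U$. Together these collapse the 2D, configuration-dependent verification to a single 1D sweep over $[0,\spikesep(T)]$ (the 100 segments in \Cref{sec:qboundproof}), with \Cref{lem:Qtri} supplying the interval bounds. Without an equivalent mechanism, your mesh-plus-Lipschitz step does not yield a bound that is uniform in $T$.
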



\subsection{Bumps and Waves}
\subsubsection{Interpolation with Two Dimensional Bumps and Waves}\label{sec:BumpsWaves}
To prove \Cref{lem:Invertibility} we construct an explicit interpolating function
\begin{equation}
    Q(t) = \sum_{i=1}^n q_iK(s_i-t),\quad q_i\in\RR,
\end{equation}
that is a solution to the interpolation equations \eqref{eq:Interpolation}. To find a $q\in\RR^n$ that satisfies equation \eqref{eq:Interpolation}, we must solve a $3|T|\times n$ linear system that is hard to analyze directly.  To avoid this difficulty we extend a key technique from \cite{bernstein2018} to the two-dimensional setting: we perform a reparametrization of $Q$ that yields an approximately diagonal $3|T|\times 3|T|$ system.  Formally, we write $Q$ in the form
\begin{equation}\label{eq:combination}
Q(t) = \sum_{j=1}^{|T|} \bcoeff_jB_j(t;s_j^1,s_j^2,s_j^3)+\wocoeff_jW^1_j(t;s_j^1,s_j^2,s_j^3)+\wtcoeff_jW^2_j(t;s_j^1,s_j^2,s_j^3),
\end{equation}
where for each $t_j\in T$, $s_j^1,s_j^2,s_j^3$ denote the three closest samples, and $B_j,W_j^1,W_j^2$ are modified kernels.  Each of these modified kernels is expressed as a linear combination of shifted copies of $K$:
\begin{equation}\label{eq:BWdefs}
\begin{aligned}
B_j(t;s_j^1,s_j^2,s_j^3)&:=\bcone_j K(s_j^1-t) + \bctwo_j K(s_j^2-t)+ \bcthree_j K(s_j^3-t),\\
W^1_j(t;s_j^1,s_j^2,s_j^3)&:=\wocone_j K(s_j^1-t) + \woctwo_j K(s_j^2-t)+ \wocthree_j K(s_j^3-t),\\
W^2_j(t;s_j^1,s_j^2,s_j^3)&:=\wtcone_j K(s_j^1-t) + \wtctwo_j K(s_j^2-t)+ \wtcthree_j K(s_j^3-t).
\end{aligned}
\end{equation}
Below we omit $s_j^i$ from the argument of these functions where convenient for ease of notation.
The $B_j$ functions, which we call \textit{bumps} due to their shape, are defined by the equations:
\begin{equation}\label{eq:Binterp}
\begin{aligned}
B_j(t_j)&=1,&\partial_x B_j(t_j)&=0,&\partial_y B_j(t_j)=0,
\end{aligned}
\end{equation}
for each $t_j\in T$.  Here $\partial_x$ and~$\partial_y$ denote the partial derivatives with respect to the two coordinates of $t_j\in\RR^2$.
Analogously, the \textit{wave} functions $W^1_j$ and $W^2_j$ are defined by
\begin{equation}\label{eq:Winterp}
\begin{aligned}
W^1_j(t_j)&=0,&\partial_x W^1_j(t_j)&=1,&\partial_y W^1_j(t_j)=0,\\
W^2_j(t_j)&=0,&\partial_x W^2_j(t_j)&=0,&\partial_y W^2_j(t_j)=1,
\end{aligned}
\end{equation}
for all $t_j\in T$.
In \Cref{sec:BWExistProof} we provide a detailed proof that the bumps and waves defined above exist
\bdb{
and are uniquely determined by the linear system derived from \eqref{eq:Binterp} and \eqref{eq:Winterp}, with coefficients
\begin{align}
\begin{bmatrix}
\bcone_j&\wocone_j&\wtcone_j\\
\bctwo_j&\woctwo_j&\wtctwo_j\\
\bcthree_j&\wocthree_j&\wtcthree_j
\end{bmatrix}
&=\frac{1}{D}
\begin{bmatrix}
e^{\|s^1_j-t_j\|^2/2}&0&0\\
0&e^{\|s^2_j-t_j\|^2/2}&0\\
0&0&e^{\|s^3_j-t_j\|^2/2}
\end{bmatrix}
\begin{bmatrix}
D_1 & (s^2_{(2)}-s^3_{(2)}) & (s^3_{(1)}-s^2_{(1)})\\
D_2 & (s^3_{(2)}-s^1_{(2)}) & (s^1_{(1)}-s^3_{(1)})\\
D_3 & (s^1_{(2)}-s^2_{(2)}) & (s^2_{(1)}-s^1_{(1)})
\end{bmatrix}
\end{align}
where $t_j=(t_{(1)},t_{(2)})$, $s^i_j=(s^i_{(1)},s^i_{(2})$ and $D\neq0$.
Here
\begin{equation}
\begin{aligned}
D_1&=(s^2_{(1)}-t_{(1)})(s^3_{(2)}-t_{(2)})-(s^3_{(1)}-t_{(1)})(s^2_{(2)}-t_{(2)})\\
D_2&=(s^3_{(1)}-t_{(1)})(s^1_{(2)}-t_{(2)})-(s^1_{(1)}-t_{(1)})(s^3_{(2)}-t_{(2)})\\
D_3&=(s^1_{(1)}-t_{(1)})(s^2_{(2)}-t_{(2)})-(s^2_{(1)}-t_{(1)})(s^1_{(2)}-t_{(2)})
\end{aligned}
\end{equation}
with $D=D_1+D_2+D_3$.
}
We also show that the bump coefficients, $\bcone_j$, $\bctwo_j$, and $\bcthree_j$, are always non-negative.  Examples of bumps and waves are shown in \Cref{fig:demobw}.

\begin{figure}[t!]\centering
\begin{subfigure}{.45\textwidth}
\centering
\includegraphics[scale=0.75]{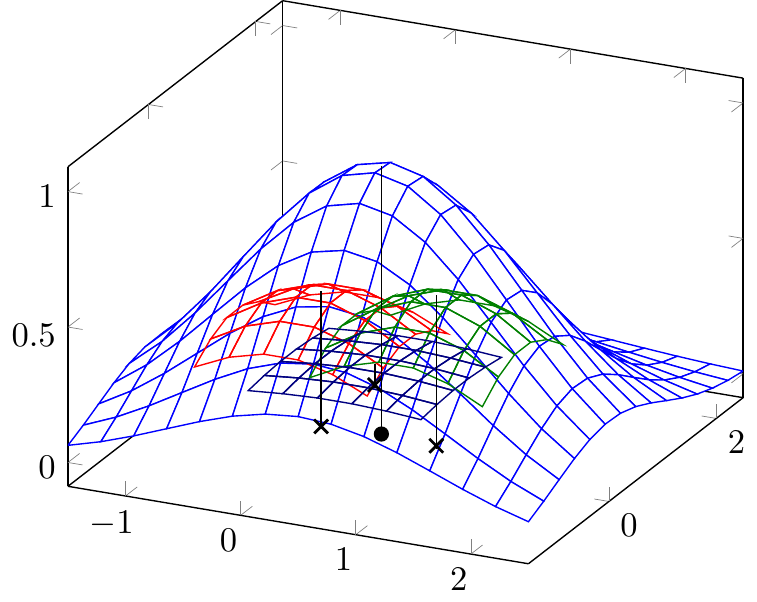}
\end{subfigure}
\begin{subfigure}{.45\textwidth}
\centering
\includegraphics[scale=0.75]{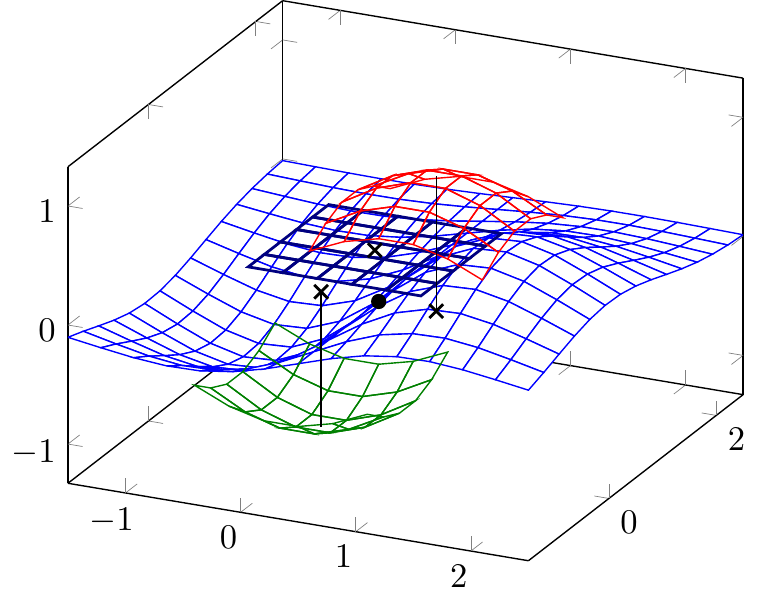}
\end{subfigure}
\caption[Plots of Bump and Wave Functions]{\label{fig:demobw}Examples of bump (left) and wave (right) functions. These functions are plotted in blue, \bdb{and their additive Gaussian components are plotted in red, green and black. The dot and three crosses represent the locations of the spike and the three closest samples respectively. The components in black have very small coefficients making them appear flat.}}
\end{figure}

The intuition behind this reparametrization is that each bump $B_j$ nearly interpolates the sign pattern $\signs_j$ at $t_j$ while the waves $W^1_j$ and $W^2_j$ alter the gradient of $Q$ to correct for the interactions from other bumps and waves.
In terms of the coefficients, this means that
$\bcoeff_j\approx\signs_j$
while both $\wocoeff_j, \wtcoeff_j\approx 0$.

Using the reparametrized $Q$ given in \eqref{eq:combination}, we can express the interpolation equations 
\begin{equation}
Q(t_j) = \signs_j,\quad \nabla Q(t_j)=0,\quad t_j\in T,
\end{equation}
in the matrix form
\begin{equation}
\begin{bmatrix}
\BB & \WWo & \WWt\\
\DoB & \DoWo& \DoWt\\
\DtB & \DtWo& \DtWt
\end{bmatrix}
\begin{bmatrix}
\bcoeff\\
\wocoeff\\
\wtcoeff
\end{bmatrix}
=
\begin{bmatrix}
\signs\\
0\\
0
\end{bmatrix}
\label{eq:BlockEquation}
\end{equation}
where the nine $|T|\times|T|$ block matrices above are defined by 
\begin{equation}
\begin{aligned}\label{eq:MatrixDefinition}
(\BB)_{jk} =&B_k(t_j) & (\WWo)_{jk}=&W^1_k(t_j) & (\WWt)_{jk}=&W^2_k(t_j)\\
(\DoB)_{jk} =&\partial_xB_k(t_j) & (\DoWo)_{jk}=&\partial_xW^1_k(t_j) & (\DoWt)_{jk}=&\partial_xW^2_k(t_j)\\
(\DtB)_{jk} =&\partial_yB_k(t_j) & (\DtWo)_{jk}=&\partial_yW^1_k(t_j) & (\DtWt)_{jk}=&\partial_yW^2_k(t_j),
\end{aligned}
\end{equation}
for $t_j,t_k\in T$.  By construction, the $3|T|\times 3|T|$ matrix equation \eqref{eq:BlockEquation} will be approximately equal to the identity matrix.  We exploit these properties in \Cref{sec:InvertibilityProof} to prove that equation \eqref{eq:BlockEquation} has a solution $(\bcoeff,\wocoeff,\wtcoeff)$.  This yields an interpolation function $Q$ defined by \eqref{eq:combination}, thereby completing the proof of \Cref{lem:Invertibility}.

\subsubsection{Bounding Bumps and Waves}
\label{sec:BWBounds}
In this section we construct radially symmetric upper bounds for the bumps, waves and their derivatives that are used in the proofs of \Cref{lem:Invertibility,lem:QBound}. 
For a fixed grid spacing distance $\gridsep$ (see \Cref{def:gridsep}), we define the following envelope functions for the bump, the waves, their partial derivatives, and the largest absolute eigenvalues of their Hessians:
\begin{equation}
\begin{aligned}\label{eq:Envelopes}
\enva{B}(r)&=\sup_{\substack{\norm{t-t_1}\geq r\\ s_1^1,s_1^2,s_1^3\text{ nearest }t_1}} \abs{B_1(t;s^1_1,s^2_1,s^3_1)},\\
\enva{\partial_z B}(r)&=\sup_{\substack{\norm{t-t_1}\geq r\\ s_1^1,s_1^2,s_1^3\text{ nearest }t_1}} \abs{\partial_z B_1(t;s^1_1,s^2_1,s^3_1)},\\
\enva{\lambda(B)}(r)&=\sup_{\substack{\norm{t-t_1}\geq r\\ s_1^1,s_1^2,s_1^3\text{ nearest }t_1\\\|v\|=1}} \abs{v^T\nabla^2 B_1(t;s^1_1,s^2_1,s^3_1)v}\\
\enva{W^i}(r)&=\sup_{\substack{\norm{t-t_1}\geq r,\\ s_1^1,s_1^2,s_1^3\text{ nearest }t_1}}\abs{W^i_1(t;s^1_1,s^2_1,s^3_1)},\\
\enva{\partial_z W^i}(r)&=\sup_{\substack{\norm{t-t_1}\geq r\\ s_1^1,s_1^2,s_1^3\text{ nearest }t_1}} \abs{\partial_z W^i_1(t;s^1_1,s^2_1,s^3_1)},\\
\enva{\lambda(W^i)}(r)&=\sup_{\substack{\norm{t-t_1}\geq r\\ s_1^1,s_1^2,s_1^3\text{ nearest }t_1\\\|v\|=1}} \abs{v^T\nabla^2 W^i_1(t;s^1_1,s^2_1,s^3_1)v},\\
\end{aligned}
\end{equation}
where $z\in\{x,y\}$. 
\bdb{The suprema are taken over all possible relative positions of the spike $t_1$ and its nearest three samples when the grid spacing is fixed at $\gridsep$.}
By construction the envelopes are monotonically decreasing as $r$ grows.

Our proofs also require non-monotonic upper bounds on the directional derivatives and Hessian eigenvalues of the bumps:
\begin{equation}
\begin{aligned}\label{eq:RawEnvelopes}
\env{D( B)}(r)&=\sup_{\substack{\norm{t-t_1}=r\\ s_1^1,s_1^2,s_1^3\text{ nearest }t_1}} \nabla B_1(t;s^1_1,s^2_1,s^3_1)\cdot \frac{t-t_1}{\norm{t-t_1}},\\
\env{\lambda(B)}(r)&=\sup_{\substack{\norm{t-t_1}=r\\ s_1^1,s_1^2,s_1^3\text{ nearest }t_1\\\|v\|=1}} v^T\nabla^2 B_1(t;s^1_1,s^2_1,s^3_1)v.
\end{aligned}
\end{equation}

In \Cref{sec:Envelopes} we compute piecewise-constant upper bounds of these envelopes for $r\in[0,10]$ and $\gridsep\in[0.1,0.9]$. 
For $r\geq10$ and $\gridsep\leq 2$, \Cref{lem:BumpBounds,lem:WaveBounds,lem:EVTail} establish that the envelopes are upper bounded by $2\cdot10^{-9}$. Examples of the envelopes $\enva{B}$ and $\enva{W^1}$ are shown in \Cref{fig:Envelopes} for a selection of grid spacings.

\begin{figure}
\begin{subfigure}{0.5\textwidth}
    \centering
    \includegraphics[scale=0.5]{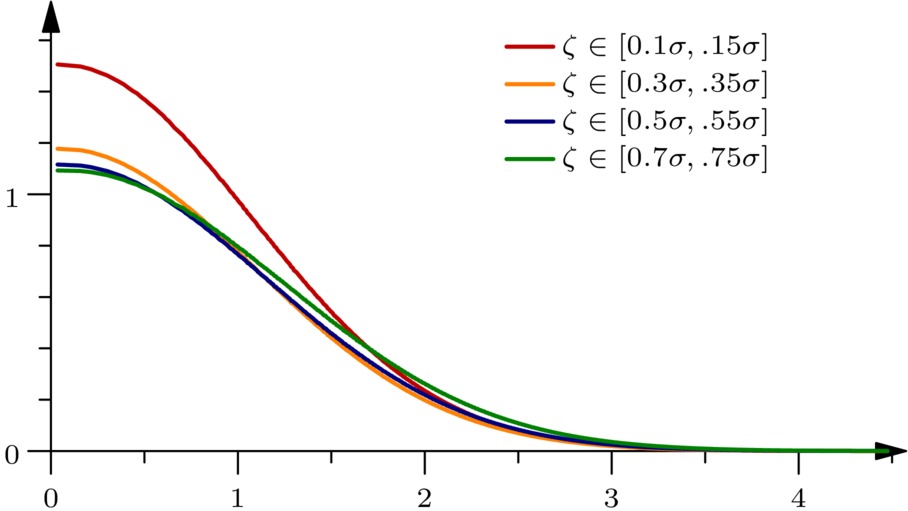}
    \caption{Bump envelopes $\enva{B}$}
    \label{fig:BumpEnvelopes}
\end{subfigure}
\begin{subfigure}{0.5\textwidth}
    \centering
    \includegraphics[scale=0.5]{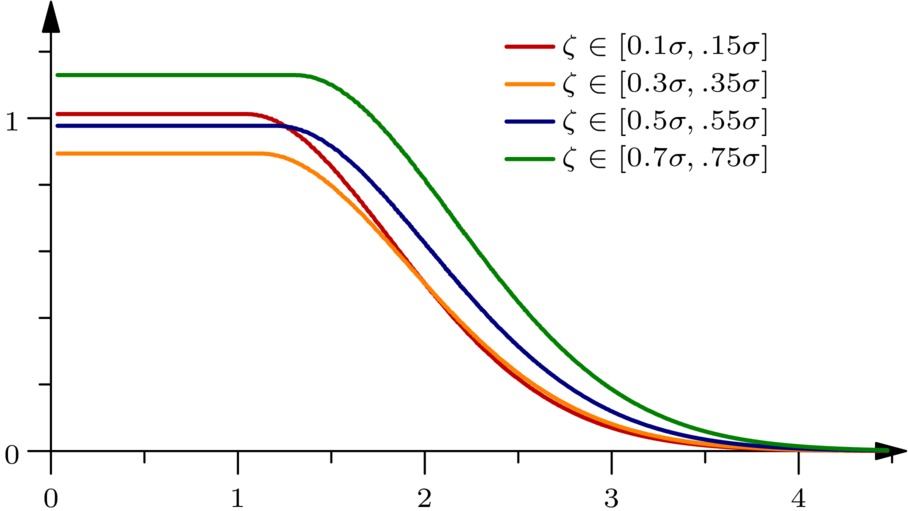}
    \caption{Wave envelopes $\enva{W^1}$}
    \label{fig:WaveEnvelopes}
\end{subfigure}
\caption{\label{fig:Envelopes} Monotonized envelopes for bump and wave functions. Horizontal axis units are in terms of $\sigma$.}
\end{figure}

\subsection{Invertibility of the Interpolation 
Equations}
\label{sec:InvertibilityProof}
To prove \Cref{lem:Invertibility} we show that the reparametrized interpolation equations 
\eqref{eq:BlockEquation} have a unique solution.  
Intuitively, when the minimum separation $\spikesep(T)$ satisfies the conditions of \Cref{thm:Exact}, this system of equations will be approximately diagonal. We formalize this intuition in the following result,
which shows that bounding the norms of the nine block matrices in equation \eqref{eq:BlockEquation} is enough to prove that a unique solution exists, and also yields bounds on the solutions $\bcoeff$, $\wocoeff$, and $\wtcoeff$.  Throughout, for an $n\times n$ matrix $A$, we write $\norminf{A}$ to denote the matrix norm
\begin{equation}
    \norminf{A} = \sup_{\norminf{x}\leq1}\norminf{Ax}.
\end{equation}

\begin{lemma}[Proof in \Cref{sec:SchurInvProof}]\label{lem:SchurInvEasy}
Suppose
\begin{enumerate}
\item $\norminf{\II-\DtWt}<1$,
\item $\norminf{\II-\SSo}<1$, and
\item $\norminf{\II-\SSth}<1$,
\end{enumerate}
where
\begin{align}
\SSo&=\DoWo-\DoWt\DtWti\DtWo,\\
\SSt&=\DoB-\DoWt\DtWti\DtB,\\
\SSth&=\BB-\WWo\SSoi\SSt+\WWt\DtWti(\DtWo\SSoi\SSt-\DtB).
\label{eq:SubMatrix}
\end{align}
Then
\begin{enumerate}
\item Equation \eqref{eq:BlockEquation} has a unique solution,
\item $\norminf{\bcoeff}\leq\norminf{\SSthi},$
\item $\norminf{\wocoeff}, \norminf{\wtcoeff}\leq\norminf{\SSoi}\norminf{\SSt}\norminf{\SSthi}$,
\item $\norminf{\bcoeff-\signs}\leq\norminf{\SSthi}\norminf{I-\SSth}$,
\item $\abs{\bcoeff_i}\geq 1-\norminf{\SSthi}\norminf{I-\SSth}$ for all $i$.
\end{enumerate}
\end{lemma}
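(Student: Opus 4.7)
My strategy is block Gaussian elimination on the $3|T|\times 3|T|$ system~\eqref{eq:BlockEquation}: I eliminate $\wtcoeff$ using the bottom block row, then $\wocoeff$ using the middle block row, reducing the system to a single equation in $\bcoeff$ whose coefficient matrix will turn out to be exactly $\SSth$. The three hypotheses each supply one invertibility via the standard Neumann-series argument: whenever $\norminf{I-A}<1$, the series $\sum_{k\geq 0}(I-A)^k$ converges in operator norm to $A^{-1}$, so $A$ is invertible. I apply this in turn to $\DtWt$, $\SSo$, and $\SSth$.

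Invoking hypothesis~(1), I solve the third block row of~\eqref{eq:BlockEquation} for
\begin{equation*}
\wtcoeff=-\DtWti(\DtB\bcoeff+\DtWo\wocoeff).
\end{equation*}
Substituting into the middle row collapses it to $\SSt\bcoeff+\SSo\wocoeff=0$, exactly matching the definitions in the hypothesis, and hypothesis~(2) inverts this to $\wocoeff=-\SSoi\SSt\bcoeff$. Plugging both expressions into the first block row and simplifying yields $\SSth\bcoeff=\signs$ with $\SSth$ as in~\eqref{eq:SubMatrix}, and hypothesis~(3) produces the unique solution $\bcoeff=\SSthi\signs$. Uniqueness of the full triple $(\bcoeff,\wocoeff,\wtcoeff)$ follows because each back-substitution is forced.

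The norm bounds follow by propagating $\norminf{\cdot}$ through sub-multiplicativity. Immediately $\norminf{\bcoeff}\leq\norminf{\SSthi}\norminf{\signs}=\norminf{\SSthi}$; then $\wocoeff=-\SSoi\SSt\bcoeff$ yields $\norminf{\wocoeff}\leq\norminf{\SSoi}\norminf{\SSt}\norminf{\SSthi}$. Rewriting $\SSth\bcoeff=\signs$ as $\bcoeff-\signs=(I-\SSth)\bcoeff$ gives $\norminf{\bcoeff-\signs}\leq\norminf{I-\SSth}\norminf{\SSthi}$, and the coordinate-wise reverse triangle inequality delivers $\abs{\bcoeff_i}\geq 1-\norminf{\SSthi}\norminf{I-\SSth}$.

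The step I expect to require the most care is the claimed bound on $\wtcoeff$: direct back-substitution gives $\wtcoeff=-\DtWti(\DtB-\DtWo\SSoi\SSt)\bcoeff$, which does not visibly collapse to $\norminf{\SSoi}\norminf{\SSt}\norminf{\SSthi}$ by bare sub-multiplicativity. The natural route is to exploit the near-symmetry between the two coordinate directions---the roles of $W^1$ and $W^2$ swap under exchange of $x$ and $y$, and likewise for the two rows of partial derivatives---and redo the elimination in the dual order to obtain a parallel Schur-complement decomposition. Identifying precisely which dual quantities are controlled by the stated hypotheses is the main bookkeeping obstacle; once that is in hand, the remaining steps are routine matrix algebra.
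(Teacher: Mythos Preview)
Your approach---block Gaussian elimination yielding successive Schur complements, with invertibility supplied at each stage by the Neumann series---is exactly what the paper does (it phrases it via the block-inversion formula, but the algebra is identical). Your derivations of conclusions 1, 2 (for $\bcoeff$ and $\wocoeff$), 4, and 5 are correct and match the paper's.

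You are right to flag the $\wtcoeff$ bound as the delicate step. Back-substitution gives
\[
\wtcoeff=-\DtWti\bigl(\DtB-\DtWo\,\SSoi\SSt\bigr)\bcoeff,
\]
and this does \emph{not} reduce to $\norminf{\SSoi}\norminf{\SSt}\norminf{\SSthi}$ by sub-multiplicativity alone. The paper's own proof (Lemma~\ref{lem:Schur} in Section~\ref{sec:SchurInvProof}) writes down this same expression and then simply asserts the bound, so the paper does not close the gap either. Your instinct to use the $x\leftrightarrow y$ symmetry is the right idea, but as you suspected, the stated hypotheses are \emph{asymmetric}: they give $\norminf{I-\DtWt}<1$ and $\norminf{I-\SSo}<1$, not the dual conditions $\norminf{I-\DoWo}<1$ and $\norminf{I-\widetilde{\SSo}}<1$ (with $\widetilde{\SSo}=\DtWt-\DtWo(\DoWo)^{-1}\DoWt$) that the mirrored elimination would require. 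So from the hypotheses of Lemma~\ref{lem:SchurInvEasy} alone the symmetry argument cannot be completed.

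The resolution is external to the lemma: in the paper's application (Lemma~\ref{lem:normbound} and Section~\ref{sec:Norms}) all nine block norms are bounded via radially symmetric envelopes, so the estimates for $\DoWo,\DoWt,\DtWo,\DtWt$ are pairwise equal under $x\leftrightarrow y$. This makes the dual hypotheses hold with the same numerical values, and the mirrored elimination then yields $\norminf{\wtcoeff}\le\norminf{\widetilde{\SSo}^{-1}}\norminf{\widetilde{\SSt}}\norminf{\SSthi}$ with bounds identical to those for $\SSo,\SSt$. In short: your proof is as complete as the paper's, and you have correctly located the point where both rely on an unstated symmetry of the surrounding setup.
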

The proof of \Cref{lem:SchurInvEasy} follows the same arguments given in Appendix C.1 of \cite{superres} and Lemma 3.9 in \cite{bernstein2018}. 

To complete the proof of \Cref{lem:Invertibility}, we must determine how to bound the matrix norms in \Cref{lem:SchurInvEasy} in terms of the minimum separation $\spikesep(T)$.
Bounding these norms in the two-dimensional case is more challenging than in one dimension.
We explain how to overcome this challenge in \Cref{sec:Geometry}.
Then, in \Cref{sec:Norms} we calculate these norm bounds using the envelope functions \eqref{eq:Envelopes} defined in \Cref{sec:BWBounds}.

\subsection{Spike Distances and Geometric Considerations}\label{sec:Geometry}
In this section we calculate bounds on the norms of the nine block matrices in equation~\eqref{eq:BlockEquation}.  This requires taking into account geometric considerations that do not arise in the one-dimensional setting.

Using the envelopes defined in \Cref{sec:BWBounds}, we have
\begin{align}
    \norminf{\II-\BB}
    &= \max_j \sum_{i\neq j}|B_i(t_j)|\label{eq:matrixBound1}\\
    \label{eq:matrixBound2}
    &\leq \max_j \sum_{i\neq j}\enva{B}(\|t_j-t_i\|),
\end{align}
with analogous formulas for the other blocks.
Since $\enva{B}(r)$ decreases monotonically as $r$ increases, the above bound is maximized when the spikes are clustered. Determining which spike configuration maximizes \eqref{eq:matrixBound2} while satisfying the minimum separation condition appears to be a very difficult task.
Instead, we bound this quantity by applying the pigeonhole principle to a hexagonal tiling of the plane.  

\begin{figure}[t!]\centering
\begin{subfigure}{.45\textwidth}
\centering
\includegraphics[scale=1]{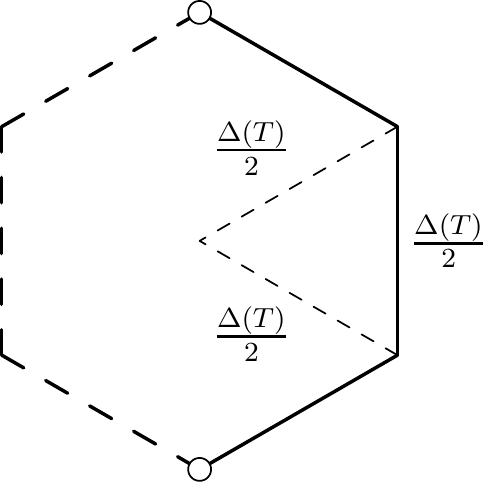}
\end{subfigure}
\begin{subfigure}{.45\textwidth}
\centering
\includegraphics[scale=1]{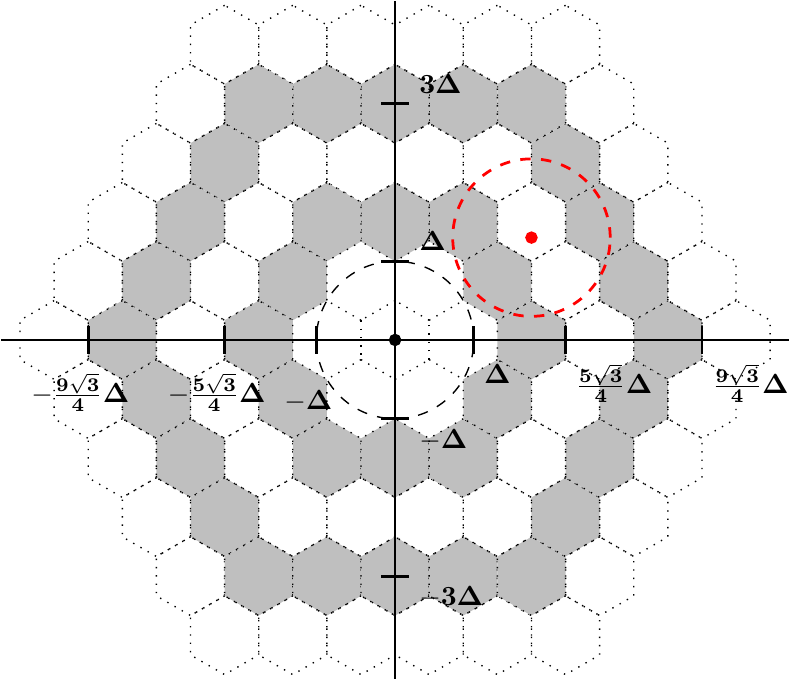}
\end{subfigure}
\caption{Depiction of a hexagonal partition of $\RR^2$.  The left image shows a single regular hexagon with radius and side length $\spikesep(T)/2$.  Note that the left border, top, and bottom points are omitted from the hexagon.  The right image shows a partition of the plane into disjoint hexagons.  The spike at the origin precludes any other spike existing within a disk of radius $\spikesep$.  The red spike and circle of radius $\spikesep(T)$ illustrate that there can be at most one spike per hexagonal cell. The hexagons are divided into layers, depending on their distance to the origin.}
\label{fig:hexgrid}
\end{figure}

For each $t_j\in T$ we consider a coordinate system where $t_j$ is at the origin.  As depicted in \Cref{fig:hexgrid}, we partition $\RR^2$ into a collection of disjoint regular hexagons with sides of length $\spikesep(T)/2$.  Let $\{U_i\}_{i=0}^\infty$ be an enumeration of these hexagons, which we call \textit{cells}, with $U_0$ denoting the cell centered at the origin. Note that if $x,y\in U_i$ for some $i$, then $\|x-y\|<\spikesep(T)$, so each cell contains at most one spike.   
The cells can be arranged in concentric rings, called \textit{layers}, also depicted in \Cref{fig:hexgrid}.
Each layer, shaded with the same color, forms a contiguous ring of hexagons surrounding the previous layer. Layer one is formed by six white inner hexagons that overlap with the central circle, layer two is the ring of twelve gray hexagons surrounding the first layer, and so on.

The next lemma shows that, when bounding the sums in equation~\eqref{eq:matrixBound1}, contributions from spikes in layers nine and higher are negligible. Its proof, given in \Cref{sec:BumpTailProof}, exploits the rapidly decaying tails of the Gaussian kernel.

\begin{restatable}[Proof in \Cref{sec:BumpTailProof}]{lemma}{BumpTail}\label{lem:BumpTail}
Fix $t_j\in T$ at the origin. Let $10^{-2}<\gridsep\leq1$, and $\spikesep(T)\geq2$. Let $\UU_{\geq 9}$ denote the union of all hexagonal cells in layers nine and higher.  If $z$ is any point with $\norm{z}_2\leq\spikesep(T)$, then
\begin{equation}
\sum_{t_k\in T\cap\,\UU_{\geq9}}\abs{f(t_k-z)}< 2\times10^{-12}=:\epsbump\mbox{\quad and\quad}
\sum_{t_k\in T\cap\,\UU_{\geq9}}\abs{g(t_k-z)}< 2\times 10^{-10}=:\epswave,
\end{equation}
where $f$ is the bump $B_j$, or any of its first and second partial derivatives,
and $g$ is the wave $W^i_j$, or any of its first and second partial derivatives, for $i=1,2$.
\end{restatable}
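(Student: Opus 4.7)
The plan is to exploit the fast Gaussian decay of each bump and wave together with the sparse packing enforced by the hexagonal partition: each summand $|f(t_k-z)|$ decays super-polynomially in $\|t_k\|$ because $\|z\|\leq\spikesep(T)$ is much smaller than $\|t_k\|$ when $t_k$ lies in layer nine or beyond, while the number of spikes allowed in layer $\ell$ is bounded by the cell count $6\ell$. Since each of the six relevant functions---the bump $B_j$, the wave $W^i_j$, and their first and second partials---is a linear combination of at most three shifted Gaussians, the tail decay is inherited directly from $K$ up to coefficient and polynomial prefactors.

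First, I would derive a uniform pointwise envelope of the shape
\begin{equation*}
|f(u)| \;\leq\; C_f\, P_f(\|u\|)\, \exp\!\bigl(-(\|u\| - \sqrt{2}\,\gridsep)^2/2\bigr),
\end{equation*}
valid for all admissible sample configurations with grid spacing $\gridsep\in(10^{-2},1]$, where $P_f$ is a polynomial of degree at most two that absorbs differentiation of $K$, and $C_f$ is an explicit constant. The constant $C_f$ can be read directly off the closed-form expression for the bump/wave coefficients displayed just after \eqref{eq:Winterp}: the factor $1/D$ is bounded because $|D|$ equals twice the area of the triangle formed by the three nearest samples to $t_j$, which in turn admits a lower bound of order $\gridsep^2$; the exponential weights $e^{\|s^i_j - t_j\|^2/2}$ contribute at most $e$ since the nearest samples lie within distance $\sqrt{2}\,\gridsep \leq \sqrt{2}$ of $t_j$; and the geometric entries in the right factor are all of order $\gridsep$.

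Next, I would sum over hexagonal layers. A standard counting argument for the hexagonal tiling shows that layer $\ell$ contains exactly $6\ell$ cells and that every point in layer $\ell$ lies at distance at least $c\,\ell\,\spikesep(T)$ from the origin for an explicit constant $c>0$ determined by the tiling geometry. Combined with $\|z\|\leq\spikesep(T)$ and $\spikesep(T)\geq 2$, the triangle inequality gives $\|t_k-z\| \geq (c\ell - 1)\spikesep(T) \geq 2(c\ell - 1)$ for every $t_k$ in layer $\ell$. Substituting into the envelope yields
\begin{equation*}
\sum_{t_k \in T\cap\UU_{\geq 9}} |f(t_k - z)| \;\leq\; \sum_{\ell=9}^{\infty} 6\ell\, C_f\, P_f\bigl(2(c\ell-1)\bigr)\, \exp\!\bigl(-(2(c\ell-1) - \sqrt{2})^2/2\bigr),
\end{equation*}
a rapidly convergent series whose sum is dominated by its first term. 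A direct numerical evaluation (plausibly via the Mathematica routines referenced at the start of \Cref{sec:Proof}) using the worst-case $C_f$ and $P_f$ over $\gridsep\in(10^{-2},1]$ then produces bounds strictly less than $\epsbump$ for the bump family and strictly less than $\epswave$ for the wave family.

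The main obstacle I anticipate is uniformly controlling $1/D$ as $\gridsep$ shrinks, since for nearly collinear nearest-sample configurations the denominator can become small relative to $\gridsep^2$. The hypothesis $\gridsep>10^{-2}$ is tailored precisely to this: one needs a geometric lower bound of order $\gridsep^2$ on the area of the triangle formed by the three grid-closest samples of an arbitrary $t_j$, and this estimate then feeds into the constants $C_f$. The two-orders-of-magnitude gap between $\epsbump$ and $\epswave$ reflects an additional inverse-$\gridsep$-type factor that appears in the wave coefficients relative to the bump coefficients---visible in the off-diagonal columns of the coefficient matrix displayed after \eqref{eq:Winterp}---which must be propagated carefully through the argument to land on the right order of magnitude.
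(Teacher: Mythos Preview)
Your proposal is correct and follows essentially the same route as the paper: pointwise Gaussian-type envelopes on the bump/wave and their derivatives (the paper's \Cref{lem:BumpBounds,lem:WaveBounds}), the $6\ell$-cells-per-layer count, a layer-distance lower bound of the form $\|t_k-z\|\gtrsim\ell\,\spikesep(T)$, and summation of the resulting super-exponentially decaying series. One small simplification you are missing: the three grid-nearest samples always form an axis-aligned right triangle with legs $\gridsep$, so $|D|=\gridsep^2$ exactly (\Cref{lem:crossprod}) and the ``nearly collinear'' obstacle you anticipate does not arise; the hypothesis $\gridsep>10^{-2}$ is used only to control the extra $1/\gridsep$ factor in the wave coefficients, exactly as you note.
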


\Cref{lem:BumpTail} shows that distant spikes have negligible contribution to the sums in \eqref{eq:matrixBound1}.  In the following lemma we use this fact to provide bounds on the norms of the nine matrices in \eqref{eq:BlockEquation}.

\begin{lemma}\label{lem:normbound} 
Suppose $\spikesep(T)\geq 2$ and $10^{-2}<\gridsep\leq2$.  Let $\CC_{\leq 8}$ denote the collection of hexagonal cells in the inner eight layers (which excludes the cell $U_0$ containing the origin). We have
\begin{equation}
\begin{gathered}\label{eq:normbounds}
\norminf{\II-\BB}\leq\sum_{U\in\CC_{\leq8}}\enva{B}(d_U)+\epsbump,\\
\norminf{\DoB}\leq\sum_{U\in\CC_{\leq8}}\enva{\partial_xB}(d_U)+\epsbump,\quad
\norminf{\DtB}\leq\sum_{U\in\CC_{\leq8}}\enva{\partial_yB}(d_U)+\epsbump\\
\norminf{\WWo}\leq\sum_{U\in\CC_{\leq8}}\enva{W^1}(d_U)+\epswave,\quad
\norminf{\WWt}\leq\sum_{U\in\CC_{\leq8}}\enva{W^2}(d_U)+\epswave,\\
\norminf{\II-\DoWo}\leq\sum_{U\in\CC_{\leq8}}\enva{\partial_xW^1}(d_U)+\epswave,\quad
\norminf{\DoWt}\leq\sum_{U\in\CC_{\leq8}}\enva{\partial_xW^2}(d_U)+\epswave,\\
\norminf{\DtWo}\leq\sum_{U\in\CC_{\leq8}}\enva{\partial_yW^1}(d_U)+\epswave,\quad
\norminf{\II-\DtWt}\leq\sum_{U\in\CC_{\leq8}}\enva{\partial_yW^2}(d_U)+\epswave,
\end{gathered}
\end{equation}
where $d_U=\inf\{\norm{x}:x\in U,\norm{x}\geq\spikesep(T)\}$,  $\epsbump=2\times10^{-12}$, $\epswave=2\times10^{-10}$.
\end{lemma}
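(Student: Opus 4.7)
I would prove all nine inequalities in~\eqref{eq:normbounds} via a single template applied nine times. First, by the interpolation equations~\eqref{eq:Binterp} and~\eqref{eq:Winterp}, the diagonals of the relevant blocks are pinned: $(\BB)_{jj}=(\DoWo)_{jj}=(\DtWt)_{jj}=1$, while $\DoB$, $\DtB$, $\WWo$, $\WWt$, $\DoWt$, and $\DtWo$ vanish on the diagonal. Each matrix norm in \eqref{eq:normbounds} therefore reduces to $\max_j\sum_{k\neq j}|M_{jk}|$, where $M_{jk}$ is the corresponding off-diagonal bump or wave value. Since the envelopes in~\eqref{eq:Envelopes} are defined as suprema over all admissible spike-and-sample configurations, every off-diagonal entry is pointwise controlled by its envelope; for instance $|B_k(t_j)|\leq\enva{B}(\|t_j-t_k\|)$ and $|\partial_y W^1_k(t_j)|\leq\enva{\partial_y W^1}(\|t_j-t_k\|)$.

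Next, I would fix an index $j$ attaining the row maximum, translate coordinates so that $t_j$ lies at the origin, and activate the hexagonal partition from \Cref{fig:hexgrid}. Each cell has diameter strictly less than $\spikesep(T)$ once the boundary points noted in the caption are excluded, so by the minimum-separation hypothesis every cell contains at most one spike, and $U_0$ contains only $t_j$. The row sum then splits as $\sum_{k\neq j}|M_{jk}|=\sum_{U\in\CC_{\leq 8}}\sum_{t_k\in T\cap U}|M_{jk}|+\sum_{t_k\in T\cap\UU_{\geq 9}}|M_{jk}|$. For the inner contribution, any $t_k\in T\cap U$ with $U\in\CC_{\leq 8}$ satisfies both $\|t_k\|\geq\spikesep(T)$ (by minimum separation from $t_j=0$) and $t_k\in U$, hence $\|t_k\|\geq d_U$; combining monotonicity of the envelope with the one-spike-per-cell bound yields an inner contribution of at most $\sum_{U\in\CC_{\leq 8}}\enva{\cdot}(d_U)$.

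For the outer contribution I would invoke \Cref{lem:BumpTail} with $z=0$, which satisfies $\|z\|_2\leq\spikesep(T)$ trivially. The lemma directly bounds the tail sums by $\epsbump$ for the bump-related blocks ($\BB$, $\DoB$, $\DtB$) and by $\epswave$ for the wave-related blocks ($\WWo$, $\WWt$, $\DoWo$, $\DoWt$, $\DtWo$, $\DtWt$), because the envelopes are realized by precisely the family of bump/wave functions that \Cref{lem:BumpTail} controls. Adding the inner and outer contributions produces the stated inequality for whichever block is under consideration; only the envelope and tail constant change from one instantiation to the next.

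The only real subtlety is geometric bookkeeping rather than analytic difficulty: one must verify that the hexagonal cells of side $\spikesep(T)/2$ enforce at most one spike per cell (ensured by the diameter being strictly less than $\spikesep(T)$ after the boundary exclusion shown in \Cref{fig:hexgrid}) and that the envelope suprema in \eqref{eq:Envelopes} genuinely cover every pair $(t_j,t_k)$ together with the three samples nearest $t_k$. Both points are already settled by the setup in \Cref{sec:Geometry,sec:BWBounds}, so the remainder of the proof is a uniform application of the template above for each of the nine norms.
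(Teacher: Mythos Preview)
Your proposal is correct and follows essentially the same approach as the paper's proof: reduce each norm to a maximum of off-diagonal row sums via the diagonal values fixed by~\eqref{eq:Binterp}--\eqref{eq:Winterp}, bound each off-diagonal entry by the appropriate monotone envelope, split the sum into the inner eight layers and the remainder, apply the one-spike-per-cell property together with monotonicity and the definition of $d_U$ for the inner part, and invoke \Cref{lem:BumpTail} (with $z=0$) for the tail. Your write-up is, if anything, slightly more explicit than the paper's about why the diagonals vanish and why each cell holds at most one spike.
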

\begin{proof}
Fix $t_j\in T$ and assume, without loss of generality, that it is positioned at the origin.
Let $S$ denote the set of all spikes (excluding $t_j$) in the first eight layers closest to the origin. Then we have
\begin{align}
\sum_{t_i:t_i\neq t_j}\abs{B_i(t_j)}
&\leq\sum_{t_i\in S}\abs{B_i(t_j)}+\epsbump\\
&\leq\sum_{t_i\in S}\enva{B}(\norm{t_i-t_j})+\epsbump\\
&\leq\sum_{U\in\CC_{\leq8}}\enva{B}(d_U)+\epsbump\label{eq:mono}
\end{align}
where the first inequality uses \Cref{lem:BumpTail}, the second follows by the definition of the envelope $\enva{B}$, and the last by the monotonicity of $\enva{B}$ and the definition of $d_U$.
This bound applies to any spike $t_j$, so we have that $\norminf{\II-\BB}$ is less than the value in \eqref{eq:mono}. The other eight norm bounds are derived in the same way.
\end{proof}

For brevity, we omit the simple yet tedious calculation of the $d_U$-values required by \Cref{lem:normbound}.  The locations that determine the $d_U$-values for the four innermost layers are depicted in \Cref{fig:spikegrid}.
In the next section we combine \Cref{lem:normbound} with \Cref{lem:SchurInvEasy} to prove \Cref{lem:Invertibility}.

\begin{figure}[!t]
\centering
\includegraphics{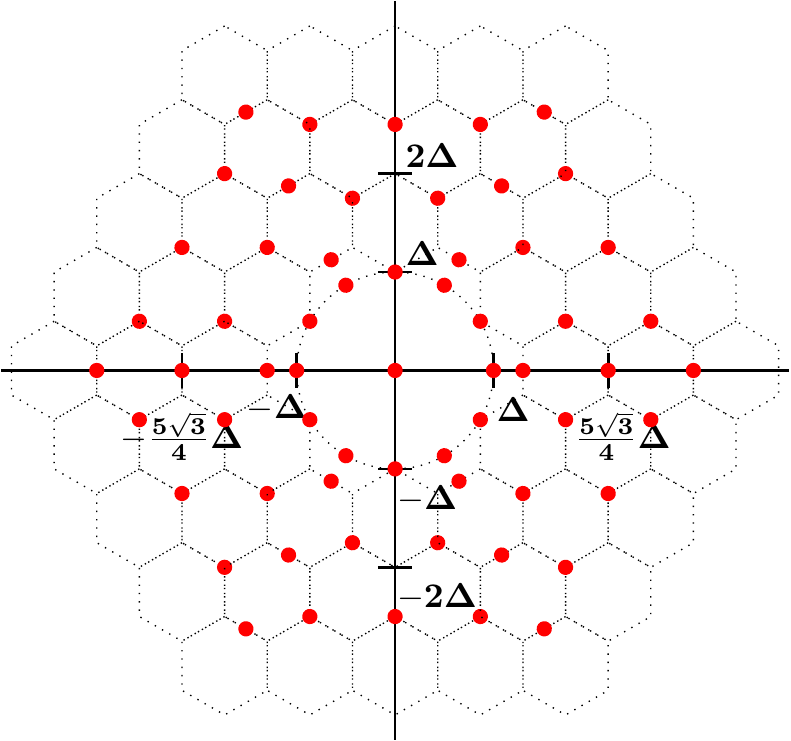}
\caption[Nearest Hexagonal Grid Points ]{The red dots indicate the locations of the closest points to the origin with norm at least $\spikesep(T)$ that belong to the each of the four innermost layers.
The norms of these points determine 
the values of $d_U$ in \Cref{lem:normbound}.}\label{fig:spikegrid}
\end{figure}

\subsection{Proof of \Cref{lem:Invertibility}}\label{sec:Norms}

\begin{figure}[!t]
\centering
\includegraphics[scale=0.5]{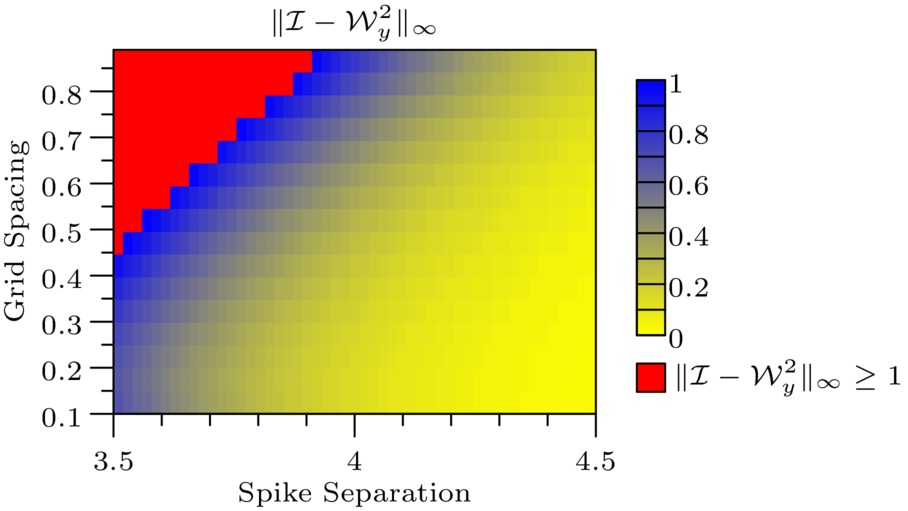}
\includegraphics[scale=0.5]{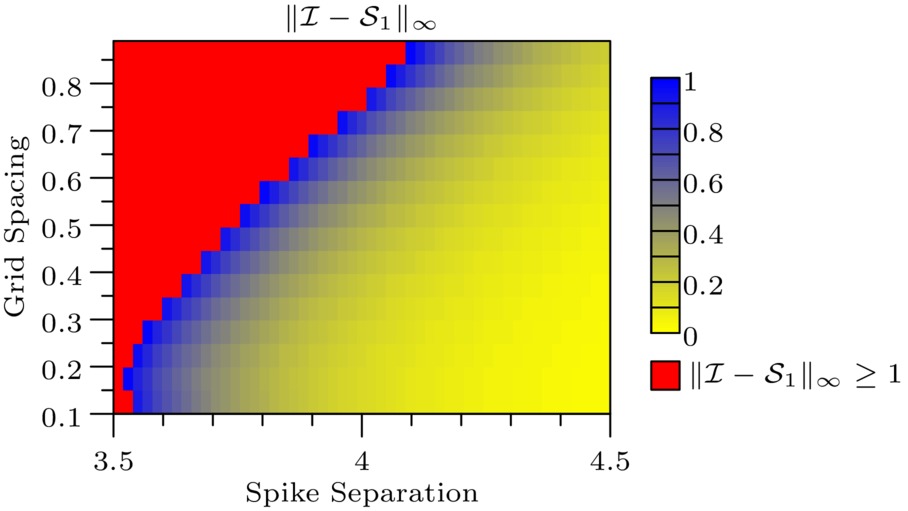}
\includegraphics[scale=0.5]{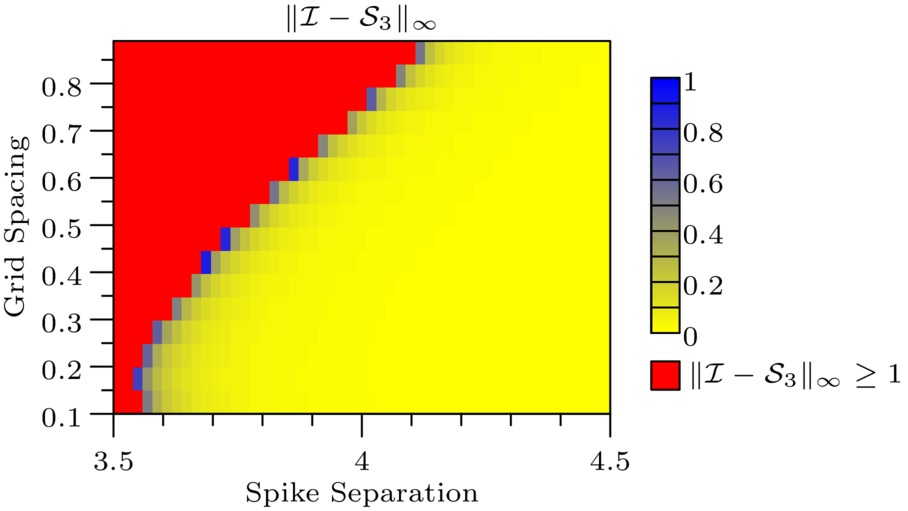}
\includegraphics[scale=0.5]{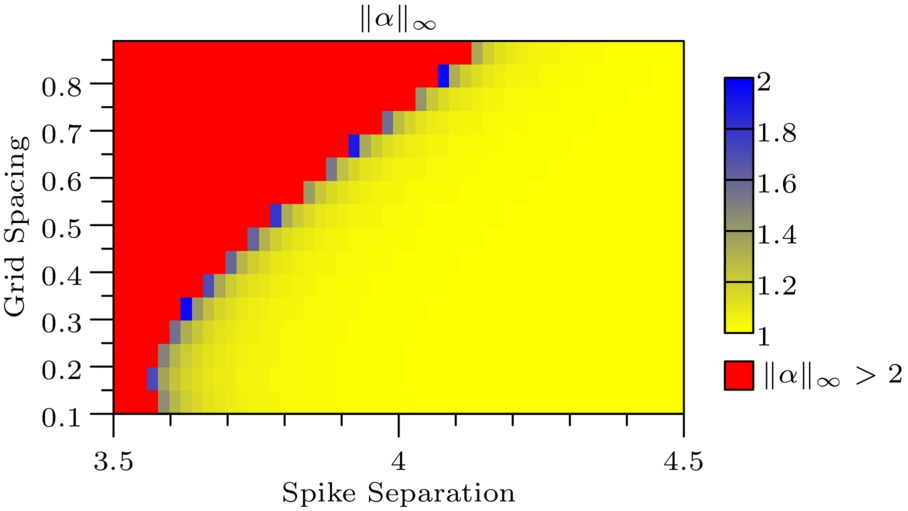}
\includegraphics[scale=0.5]{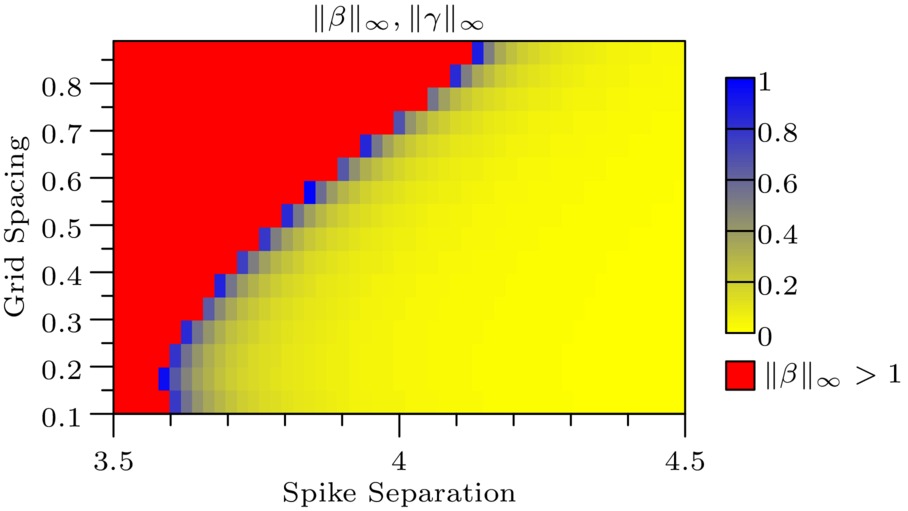}
\includegraphics[scale=0.5]{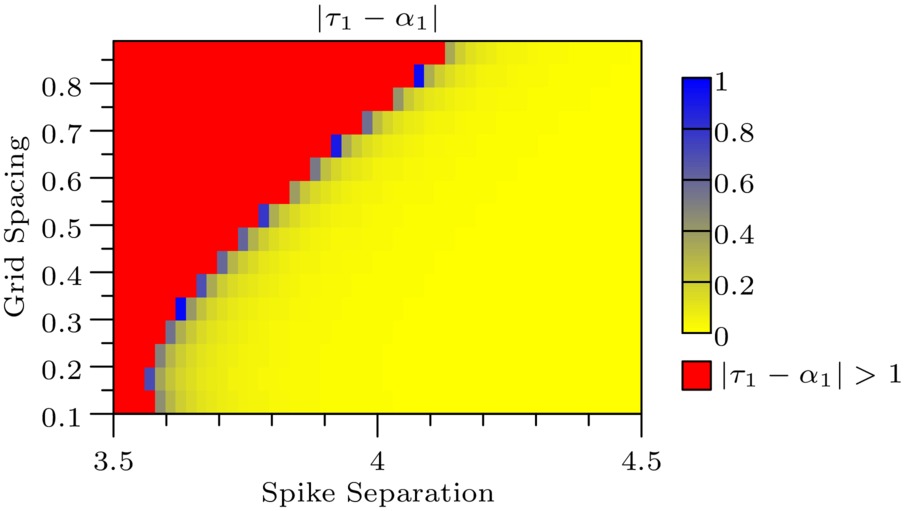}
\caption[Plots of Values of Coefficients and Matrix Norms]{The graphs show upper bounds on the matrix norms in \Cref{lem:SchurInvEasy}, and on the coefficient vectors $\bcoeff$, $\wocoeff$ and $\wtcoeff$ for a range of spike separations and grid spacings. In the red regions the bounds are too large to prove that \eqref{eq:conditionQTc} holds.}
\label{fig:NormsCoeffs}
\end{figure}

\Cref{lem:normbound} provides norm bounds on the nine block matrices defined in \eqref{eq:BlockEquation}.  The bounds are computed by evaluating the envelope functions defined in 
\Cref{sec:BWBounds} (and calculated in \Cref{sec:Envelopes}) at each $d_U$-value.  To satisfy the conditions of \Cref{lem:SchurInvEasy}, we must prove that
\begin{equation}
    \norminf{\II-\DtWt}<1,\quad
    \norminf{\II-\SSo}<1, \quad\text{and}\quad
    \norminf{\II-\SSth}<1,
\end{equation}
where $\SSo$ and $\SSth$ are defined in \Cref{lem:SchurInvEasy}.  Note that
\begin{align}
    \norminf{\II-\SSo}
    & = \norminf{\II-\DoWo+\DoWt\DtWti\DtWo}\\
    & \leq \norminf{\II-\DoWo}+\norminf{\DoWt\DtWti\DtWo}
    & \text{(Triangle Inequality)}\\
    & \leq \norminf{\II-\DoWo}+\norminf{\DoWt}
    \norminf{\DtWti}\norminf{\DtWo} & \text{(Sub-multiplicativity)}\\
    & \leq \norminf{\II-\DoWo}+\frac{\norminf{\DoWt}
    \norminf{\DtWo}}{1-\norminf{\II-\DtWt}},
\end{align}
where the last inequality follows from
\begin{equation}
    \norminf{\DtWti} \leq \frac{1}{1-\norminf{\II-\DtWti}},
\end{equation}
since $\norminf{\II-\DtWt}<1$ (see the proof of \Cref{lem:SchurInv} for more details).  Thus we have bounded $\norminf{\II-\SSo}$ in terms of quantities computed in \Cref{lem:normbound}.  By similar logic we can also bound $\norminf{\II-\SSth}$ in terms of the matrix bounds computed in \Cref{lem:normbound}.  In \Cref{fig:NormsCoeffs} we compute these bounds over a range of $(\gridsep,\spikesep(T))$ pairs.
The plots are divided into colored rectangles, each representing a length $0.05$ interval of grid separation values, and a length $0.01$ interval of spike separation values.  The color displayed in each rectangle is the corresponding upper bound that applies to all $(\gridsep,\spikesep(T))$-values in that rectangle.  
As required, the region where the parameters $(\spikesep(T),\gridsep)$ satisfy the assumptions of \Cref{thm:Exact} is a subset of the region where 
the assumptions of \Cref{lem:SchurInvEasy} are satisfied.
This guarantees the existence of a solution $(\bcoeff, \wocoeff,\wtcoeff)$ to \eqref{eq:BlockEquation} which in turn yields an interpolation function $Q$ defined by \eqref{eq:combination} that proves \Cref{lem:Invertibility}. As a byproduct, we obtain bounds on the solutions $\bcoeff$, $\wocoeff$ and $\wtcoeff$ that are used in the following sections.

\subsection{Bounding the Interpolation Function (Proof of \Cref{lem:QBound})}\label{sec:qboundproof}

In \Cref{lem:Invertibility} we prove the existence of an interpolating function $Q$ 
when the assumptions of \Cref{thm:Exact} hold.
Here we show $\abs{Q}<1$ on $T^c$ proving \Cref{lem:QBound}.  For points in $T^c$ close to an element of $T$, we exploit the curvature and slope of the interpolating function to establish the bound.  For points distant from $T$, we upper bound the magnitude of $Q$ directly. 

Fix $t\in T^c$.  We first assume $\|t-t_j\|\geq \spikesep(T)$ for all $t_j\in T$.   Using \eqref{eq:combination} we can decompose $Q$ into a sum of bumps and waves:
\begin{equation}
    Q(t) = \sum_{t_j\in T} \bcoeff_jB_j(t)+\wocoeff_jW_j^1(t)+\wtcoeff_jW_j^2(t)
\end{equation}
Noting that $T'=T\cup\{t\}$ has minimum separation $\spikesep(T)$, we bound $Q(t)$ using the matrix norms in \Cref{lem:normbound} replacing $T$ by $T'$:
\begin{align}
    |Q(t)| & \leq \sum_{t_j\in T} |\bcoeff_j||B_j(t)|+|\wocoeff_j||W_j^1(t)|+|\wtcoeff_j||W_j^2(t)|\\
    & \leq \norminf{\bcoeff}\sum_{t_j\in T'\setminus\{t\}}|B_j(t)| + 
    \norminf{\wocoeff}\sum_{t_j\in T'\setminus\{t\}}|W_j^1(t)| + 
    \norminf{\wtcoeff}\sum_{t_j\in T'\setminus\{t\}}|W_j^2(t)|\\
    & \leq \norminf{\bcoeff}\norminf{I-\wt{\BB}}+\norminf{\wocoeff}\norminf{\wt{\WWo}}
    +\norminf{\wtcoeff}\norminf{\wt{\WWt}},
\end{align}
where $\wt{\BB}$, $\wt{\WWo}$, and $\wt{\WWt}$ are the submatrices corresponding to the enlargened spike set $T'$, and the coefficient bounds are seen in \Cref{fig:NormsCoeffs}.
The values of these norms are less than the bounds determined in \Cref{lem:normbound}.
In \Cref{fig:QRecoveryDistant} we compute this bound over a range of $(\spikesep(T),\gridsep)$ pairs and show that it is strictly less than 1 when the assumptions of \Cref{thm:Exact} are satisfied.  This completes the proof when $\norm{t-t_j}\geq\spikesep(T)$ for all $t_j\in T$.

\begin{figure}[t!]
    \centering
    \includegraphics[scale=0.5]{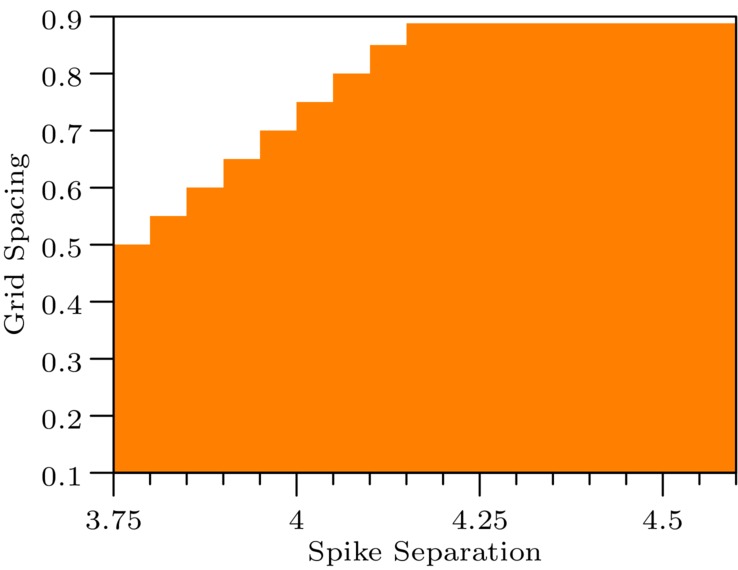}
    \caption{$(\spikesep(T),\gridsep)$ pairs where $\abs{Q(t)}<1$ when $\norm{t-t_j}\geq\spikesep(T)$ for all $t_j\in T$.}
    \label{fig:QRecoveryDistant}
\end{figure}

Next we handle the case when $\norm{t-t_j}<\spikesep(T)$ for some $t_j\in T$.  The following lemma from calculus, applied with $h:=Q$ and $L:=\spikesep(T)$, allows us to use derivative information to produce sharper bounds on $Q$.

\begin{lemma}\label{lem:Regions}
Assume that $h:\RR^n\to\RR$ has continuous second partial derivatives, and satisfies $h(x_0)=1$ and $\nabla h(x_0)=0$ for some $x_0\in\RR^n$. If there are $u_1$, $u_2\in(0,L]$ 
such that for all unit vectors $v$
\begin{enumerate}
\item $\int_{0}^{r}(v^T\nabla^2 h(x_0+sv)v)(r-s)\,ds<0$ for $r\in(0,u_1]$,
\item $\int_{0}^{u_1}(v^T\nabla^2 h(x_0+sv)v)(r-s)\,ds+\int_{u_1}^{r}\nabla h(x_0+sv)^T v\,ds<0$ for $r\in(u_1,u_2]$,
\item $|h(x_0+sv)|<1$ for $s\in[u_2,L]$, and
\item $h(x_0+sv)>-1$ for $s\in(0,L]$,
\end{enumerate}
then $|h(x)|<1$ for all $x$ with $0<\norm{x-x_0}\leq L$.
\end{lemma}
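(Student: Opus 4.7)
The plan is to reduce the multivariable statement to a one-dimensional argument along an arbitrary direction, and then apply Taylor's theorem with integral remainder on each of the three subintervals $(0,u_1]$, $(u_1,u_2]$, and $[u_2,L]$ singled out by the hypotheses. Fix $x$ with $0<\norm{x-x_0}\leq L$, set $r:=\norm{x-x_0}$ and $v:=(x-x_0)/r$, and define $\phi(s):=h(x_0+sv)$ for $s\in[0,L]$. Then $\phi\in C^2$, $\phi(0)=h(x_0)=1$ and $\phi'(0)=\nabla h(x_0)\cdot v=0$, while $\phi'(s)=\nabla h(x_0+sv)^T v$ and $\phi''(s)=v^T\nabla^2 h(x_0+sv)v$. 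It therefore suffices to prove $|\phi(r)|<1$.

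For $r\in(0,u_1]$, Taylor's theorem with integral remainder (applied using $\phi(0)=1$, $\phi'(0)=0$) gives
\[
\phi(r)-1 \;=\; \int_0^r \phi''(s)(r-s)\,ds,
\]
so condition 1 directly yields $\phi(r)<1$, and condition 4 gives $\phi(r)>-1$; together these produce $|\phi(r)|<1$ on this interval.

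For $r\in(u_1,u_2]$, the strategy is to split the Taylor remainder at $s=u_1$ and convert the tail piece into first-derivative form via integration by parts. Starting from the identity above, write $\phi(r)-1=\int_0^{u_1}\phi''(s)(r-s)\,ds+\int_{u_1}^r\phi''(s)(r-s)\,ds$ and integrate the second integral by parts (taking $u=r-s$ and $dv=\phi''(s)\,ds$); this produces
\[
\phi(r)-1 \;=\; \int_0^{u_1}\phi''(s)(r-s)\,ds \,+\, \int_{u_1}^r\phi'(s)\,ds \,-\, (r-u_1)\phi'(u_1).
\]
The first two terms are exactly the left-hand side of condition 2 and are therefore strictly negative; one then controls the residual boundary term $-(r-u_1)\phi'(u_1)$ by combining condition 1 (which, taken at $r=u_1$, forces $\phi(u_1)<1$ and constrains the growth of $\phi$ on $(0,u_1]$) with the continuity of $\phi'$ at $u_1$ to pin down its sign. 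This yields $\phi(r)<1$, and condition 4 supplies the matching lower bound $\phi(r)>-1$.

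For $r\in[u_2,L]$, condition 3 gives $|\phi(r)|<1$ immediately. Since $v$ and $x$ were arbitrary, $|h(x)|<1$ on the entire punctured ball $0<\norm{x-x_0}\leq L$, as claimed. The main obstacle is the middle regime $(u_1,u_2]$: the two outer cases are textbook Taylor estimates, whereas in the middle the natural Taylor expansion of $\phi(r)-1$ carries a weight $(u_1-s)$ on $[0,u_1]$ while condition 2 uses the weight $(r-s)$, and the resulting boundary discrepancy $(r-u_1)\phi'(u_1)$ is the delicate step where the joint use of conditions 1 and 2 (rather than either alone) becomes essential.
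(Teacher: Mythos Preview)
Your reduction to the one-dimensional function $\phi(s)=h(x_0+sv)$ and your handling of the outer regimes $(0,u_1]$ and $[u_2,L]$ are correct and match the paper. The gap is in the middle regime $(u_1,u_2]$. You correctly derive
\[
\phi(r)-1=\int_0^{u_1}\phi''(s)(r-s)\,ds+\int_{u_1}^r\phi'(s)\,ds-(r-u_1)\phi'(u_1),
\]
but the claim that conditions 1 and continuity ``pin down the sign'' of the boundary term $-(r-u_1)\phi'(u_1)$ is not justified. Condition 1 at $r=u_1$ yields only $\phi(u_1)<1$; it says nothing about $\phi'(u_1)=\int_0^{u_1}\phi''(s)\,ds$. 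In the intended application $\phi$ is a bump decaying away from the spike, so one expects $\phi'(u_1)<0$, making the boundary term strictly positive and working \emph{against} the inequality you want. No combination of the stated hypotheses forces $\phi'(u_1)\geq 0$.

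The paper sidesteps the boundary term altogether by a different decomposition. For any $a\in[0,r]$ it writes
\[
\phi(r)-\phi(0)=\bigl(\phi(r)-\phi(a)\bigr)+\bigl(\phi(a)-\phi(0)\bigr)=\int_a^r\phi'(s)\,ds+\int_0^a\phi''(s)(a-s)\,ds,
\]
the last step using integration by parts together with $\phi'(0)=0$. Taking $a=r$ recovers condition 1; taking $a=u_1$ gives
\[
\phi(r)-1=\int_0^{u_1}\phi''(s)(u_1-s)\,ds+\int_{u_1}^r\phi'(s)\,ds,
\]
with no residual term. Observe that the weight on $[0,u_1]$ here is $(u_1-s)$, not the $(r-s)$ appearing in condition 2 as printed; the paper's own proof matches the $(u_1-s)$ version, so the $(r-s)$ in the statement is evidently a typo. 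With that reading, the middle case is immediate and your boundary-term difficulty disappears.
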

\begin{proof}
Fix $v$ with $\|v\|=1$ and define $g:\RR\to\RR$ by $g(s)=h(x_0+sv)$.  For fixed $r>0$ define $x=x_0+rv$ and note that
\begin{align}
h(x)-h(x_0)&=g(r)-g(a)+g(a)-g(0)\\
&=\int_{a}^{r}g^\prime(s)\,ds+\int_{0}^{a}g'(s)\,ds\\
&=\int_{a}^{r}g^\prime(s)\,ds+\int_{0}^{a}g^{\prime\prime}(s)(a-s)\,ds\\
&=\int_{a}^{r}\nabla h(x_0+sv)^T v\,ds+\int_{0}^{a}(v^T\nabla^2 h(x_0+sv)v)(a-s)\,ds,\label{eq:FTC}
\end{align}
for any $a$, where the second equality follows by the fundamental theorem of calculus, and the third by integration by parts, and the assumption that $g'(0)=\nabla h(x_0)^Tv=0$.
The result follows by letting $a=r$ for $r\in(0,u_1]$, and $a=u_1$ for $r\in[u_2,L]$.
\end{proof}

\begin{figure}[t!]
\begin{subfigure}{0.5\textwidth}
    \centering
    \includegraphics[scale=1.4]{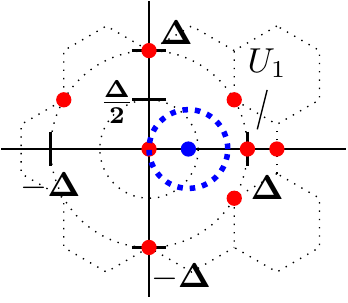}
    \caption{}\label{fig:nearpoint}
\end{subfigure}
\begin{subfigure}{0.5\textwidth}
    \centering
    \includegraphics[scale=1.4]{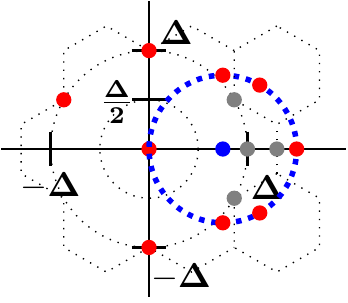}
    \caption{}\label{fig:distantpoint}
\end{subfigure}
    \caption{In both figures, the point $t\in T^c$, colored blue, is closest to the red spike at the origin.  In \Cref{fig:nearpoint} the distance $\norm{t}$ is less than $\spikesep(T)/2$, so the closest possible spikes in each hexagonal cell are only constrained by the minimum separation condition.  In contrast, \Cref{fig:distantpoint} shows that as $\norm{t}$ approaches $\spikesep(T)$, the constraint that the spike at the origin is closest to $t$ pushes the other spikes away from the boundary (the unconstrained points are shown in grey). }
    \label{fig:neardistantpoint}
\end{figure}

Let $t_1$ denote the point in $T$ that is closest to $t$.  
Without loss of generality, we can assume that $t_1$ is at the origin, that $Q(t_1)=1$, and that $t$ lies on the positive horizontal axis.
Although the samples and partial derivatives may not be axis-aligned in this rotated coordinate system, this does not affect our argument as the envelopes in \Cref{sec:BWBounds} are  radially symmetric and thus invariant under rotation.

As shown in \Cref{fig:neardistantpoint}, if $t$ lies in the interval $[a,b]$ on the positive horizontal axis with $0<a\leq b\leq\spikesep(T)$ then all spikes must have distance at least $a$ from $t$ since the spike $t_1$ at the origin is closest to $t$.  Combining this fact with the coefficient bounds in \Cref{lem:SchurInvEasy}, and the envelopes from \Cref{sec:BWBounds}, we obtain the following result bounding $Q$ and its derivatives.  We define the distance $d(A,B)$ between two sets $A,B\subseteq\RR^2$ by
\begin{equation}
    d(A,B) := \inf_{\substack{x\in A\\y\in B}}\|x-y\|.
\end{equation}

\begin{restatable}[Proof in \Cref{sec:QtriProof}]{lemma}{Qtri}
\label{lem:Qtri}
Assume the conditions of \Cref{thm:Exact} hold, and let $Q$ be the interpolation function constructed in the proof of \Cref{lem:Invertibility}. Furthermore, assume there is a spike $t_1$ at the origin with $Q(t_1)=1$.  Let $[a,b]$ denote an interval on the positive horizontal axis with $0< a\leq b\leq \Delta(T)$, and let $\CC_{\leq8}$ denote the collection of hexagonal cells in the inner eight layers (which excludes the cell $U_0$ containing the origin). For $U\in \CC_{\leq 8}$, define $d_U:=\max(d([a,b],U),a)$. Let $\bcoeff_{\LB}:=1-\norminf{\bcoeff-\signs}\geq0$ denote our lower bound on the coordinates of $\bcoeff$ from \Cref{lem:SchurInvEasy}, and let $\epsilon=10^{-9}$. Then by the envelope function definitions \eqref{eq:Envelopes} and \eqref{eq:RawEnvelopes}
we have, for all $t\in [a,b]$,
\begin{align}
\begin{split}
    |Q(t)| \leq&\norminf{\bcoeff}\enva{B}(a)+\norminf{\wocoeff}\enva{W^1}(a)+\norminf{\wtcoeff}\enva{W^2}(a)\\
    &+\sum_{U\in \CC_{\leq 8}}\norminf{\bcoeff}\enva{B}(d_U)+\norminf{\wocoeff}\enva{W^1}(d_U)+\norminf{\wtcoeff}\enva{W^2}(d_U)+\epsilon,
\end{split}\label{eq:QBound}\\
\begin{split}
    Q(t)\geq&-\norminf{\wocoeff}\enva{W^1}(a)-\norminf{\wtcoeff}\enva{W^2}(a)\\
    &-\sum_{U\in \CC_{\leq 8}}\left(\norminf{\bcoeff}\enva{B}(d_U)+\norminf{\wocoeff}\enva{W^1}(d_U)+\norminf{\wtcoeff}\enva{W^2}(d_U)\right)-\epsilon.
\end{split}\label{eq:QLowerBound}
\end{align}
If we also have $\omega:=\sup_{t\in[a,b]}\env{D (B)}(\|t\|)$ then, for all $t\in[a,b]$,
\begin{equation}
\begin{aligned}\label{eq:RawGradBound}
    \nabla Q(t)\cdot\hat{t} \leq&~\max(\bcoeff_{\LB}\ \omega,\norminf{\bcoeff}\omega) +\norminf{\wocoeff}\norm{\nabla W^1}_{\downarrow}(a)+\norminf{\wtcoeff}\norm{\nabla W^2}_{\downarrow}(a)\\
    &+\sum_{U\in \CC_{\leq 8}}\norminf{\bcoeff}\norm{\nabla B}_{\downarrow}(d_U)+\norminf{\wocoeff}\norm{\nabla W^1}_{\downarrow}(d_U)+\norminf{\wtcoeff}\norm{\nabla W^2}_{\downarrow}(d_U)+\epsilon,
\end{aligned}
\end{equation}
where $\hat{t}=t/\norm{t}$ and
\begin{equation}
    \envan{\nabla B}(s):=(\enva{\partial_x B}(s)^2+\enva{\partial_y B}(s)^2)^{1/2}
\end{equation}
with analogous definitions for $\envan{\nabla W^1}$ and $\envan{\nabla W^2}$.
If we also have $\eta:=\sup_{t\in[a,b]}\env{\lambda(B)}(\|t\|)$ then, for all $t\in[a,b]$,
\begin{align}
\begin{split}\label{eq:RawEigBound}
    \lambda(Q)(t) \leq&~\max(\bcoeff_{\LB}\ \eta,\norminf{\bcoeff}\eta) +\norminf{\wocoeff}\enva{\lambda(W^1)}(a)+\norminf{\wtcoeff}\enva{\lambda(W^2)}(a)\\
    &+\sum_{U\in \CC_{\leq 8}}\norminf{\bcoeff}\enva{\lambda(B)}(d_U)+\norminf{\wocoeff}\enva{\lambda(W^1)}(d_U)+\norminf{\wtcoeff}\enva{\lambda(W^2)}(d_U)+\epsilon
\end{split}
\end{align}
where
\begin{equation}
    \lambda(Q)(t) := \sup_{\|v\|=1} v^T\nabla^2 Q(t) v.
\end{equation}
\end{restatable}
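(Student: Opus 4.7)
The plan is to decompose $Q$ via \eqref{eq:combination} as a sum of bumps and waves, apply the triangle inequality term by term, and bound each resulting scalar using the envelopes from \Cref{sec:BWBounds} together with the $\ell_\infty$ coefficient bounds from \Cref{lem:SchurInvEasy}. The central geometric ingredient is that, since $t_1=0$ is the spike closest to $t\in[a,b]$, every other spike $t_j$ satisfies $\|t-t_j\|\geq\|t\|\geq a$; when $t_j$ additionally lies in a cell $U\in\CC_{\leq 8}$, one also has $\|t-t_j\|\geq d([a,b],U)$, so $\|t-t_j\|\geq d_U$. Spikes in layers nine and beyond are handled by \Cref{lem:BumpTail} applied to the enlarged spike set $T\cup\{t\}$ (which still has minimum separation at least $\spikesep(T)$), yielding tail contributions of at most $\epsbump$ for bumps and $\epswave$ for waves and their first and second derivatives; combined with the $\ell_\infty$ coefficient bounds, the total tail is at most $\epsilon=10^{-9}$.

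For \eqref{eq:QBound} I would split $Q(t)=\bcoeff_1 B_1(t)+\wocoeff_1 W^1_1(t)+\wtcoeff_1 W^2_1(t)+\sum_{j\neq 1}(\bcoeff_j B_j(t)+\wocoeff_j W^1_j(t)+\wtcoeff_j W^2_j(t))$, apply the triangle inequality, and pull $\norminf{\bcoeff}$, $\norminf{\wocoeff}$, $\norminf{\wtcoeff}$ out of each sum. The origin terms are bounded by $\enva{B}(a)$, $\enva{W^1}(a)$, $\enva{W^2}(a)$ since $\|t\|\geq a$. Terms from spikes $t_j\neq t_1$ in $\CC_{\leq 8}$ are bounded by the monotone envelopes evaluated at $d_U$, and the tail is absorbed into $\epsilon$ as above. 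For \eqref{eq:QLowerBound}, the only change is that $\bcoeff_1 B_1(t)\geq 0$: the coefficients $\bcone_1,\bctwo_1,\bcthree_1$ are non-negative by \Cref{sec:BumpsWaves}, so $B_1$ is a non-negative combination of Gaussians, and $\bcoeff_1\geq\bcoeff_{\LB}\geq 0$ by \Cref{lem:SchurInvEasy} (using that $\signs_1=Q(t_1)=1$). The origin bump therefore contributes non-negatively to $Q(t)$ and is dropped from the lower bound; all other terms are bounded below by the negatives of the corresponding upper bounds.

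The gradient estimate \eqref{eq:RawGradBound} follows the same scheme, but with a subtle twist at the origin. The envelope $\env{D(B)}$ is \emph{signed} rather than absolute, and this is essential because $\nabla B_1(t)\cdot\hat{t}$ is typically negative as the bump decreases away from its center. Since $\bcoeff_1\in[\bcoeff_{\LB},\norminf{\bcoeff}]$, the correct upper bound on $\bcoeff_1\,\nabla B_1(t)\cdot\hat{t}$ is $\max(\bcoeff_{\LB}\,\omega,\norminf{\bcoeff}\,\omega)$, which is monotone in $\bcoeff_1$ for either sign of $\omega$. The origin wave contributions are bounded via Cauchy--Schwarz: $|\nabla W^i_1(t)\cdot\hat{t}|\leq\|\nabla W^i_1(t)\|\leq\envan{\nabla W^i}(a)$, using the definition of $\envan{\nabla W^i}$ and the monotonicity of its components. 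The remaining contributions are handled by the absolute-value envelopes at $d_U$ and by \Cref{lem:BumpTail} applied to first partial derivatives. The Hessian bound \eqref{eq:RawEigBound} is proved identically: for fixed unit $v$, decompose $v^T\nabla^2 Q(t) v$, bound the origin bump by $\max(\bcoeff_{\LB}\,\eta,\norminf{\bcoeff}\,\eta)$ using the signed envelope $\env{\lambda(B)}$, bound the origin waves by $\enva{\lambda(W^i)}(a)$, and bound the remaining terms by the absolute eigenvalue envelopes at $d_U$ plus a tail from \Cref{lem:BumpTail}. Taking the supremum over $v$ then yields the stated bound on $\lambda(Q)(t)$.

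The main obstacle is bookkeeping rather than any deep inequality: one must carefully maintain the distinction between the signed envelopes $\env{D(B)}$ and $\env{\lambda(B)}$ used for the origin bump (needed because the bump's radial derivative and quadratic form are negative near $t_1$ while $\bcoeff_1$ is only sandwiched between a positive lower bound and $\norminf{\bcoeff}$) and the absolute-value envelopes used everywhere else, justify $d_U=\max(d([a,b],U),a)$ as the correct distance lower bound coming from the combination of the cell geometry and the closest-spike assumption, and verify that the tail contributions---once weighted by $\norminf{\bcoeff},\norminf{\wocoeff},\norminf{\wtcoeff}$ from \Cref{lem:SchurInvEasy}---combine to at most $\epsilon=10^{-9}$.
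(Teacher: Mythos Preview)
Your approach is essentially the same as the paper's: decompose $Q$ into bumps and waves, bound the origin contributions with the appropriate (signed or absolute) envelopes at $a$, bound the $\CC_{\leq 8}$ contributions with the monotone envelopes at $d_U$, and absorb layers $\geq 9$ into $\epsilon$ via \Cref{lem:BumpTail}. One slip to fix: the enlarged set $T\cup\{t\}$ does \emph{not} have minimum separation $\spikesep(T)$ here, since $\|t-t_1\|=\|t\|$ can be as small as $a$; so you cannot invoke \Cref{lem:BumpTail} through that route. The paper instead applies \Cref{lem:BumpTail} with the hexagonal partition centered at $t_1=0$ and with $z=t$, which is legitimate because $\|t\|\leq b\leq\spikesep(T)$; this yields the same tail bound and the rest of your argument goes through unchanged.
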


\begin{figure}[t]
    \centering
    \includegraphics{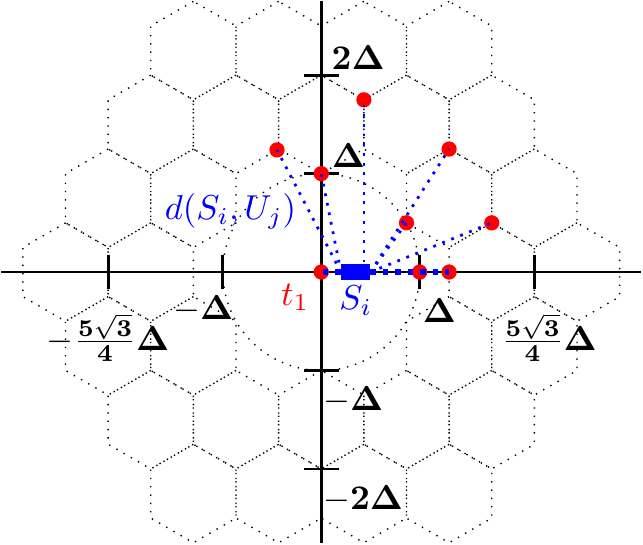}
    \caption{The distances $d(S_i,U)$ for a fixed interval $S_i$ and several choices of $U\in\CC_{\leq 8}$.  These distances are used in \Cref{lem:Qtri} to compute bounds on the interpolation function $Q$.}
    \label{fig:intervalcelldists}
\end{figure}

By repeatedly applying \Cref{lem:Qtri} we obtain the following procedure for establishing exact recovery for a fixed minimum separation $\spikesep(T)$, and fixed range of grid spacings $[\gridsep_1,\gridsep_2]$:
\begin{enumerate}
    \item Partition the interval $[0,\spikesep(T)]$ of the positive horizontal axis into 100 segments of equal length $S_1,\ldots,S_{100}$ where $S_i:=((i-1)\Delta/100,i\Delta/100]$.
    \item Apply \Cref{lem:Qtri} to obtain bounds on $Q$ and its derivatives over each segment $S_i$. 
    Note that the distances $d(S_i,U)$ (depicted in \Cref{fig:intervalcelldists}) for $U\in \CC_{\leq 8}$ can be precomputed for  $\spikesep(T)=1$.
    The distances for other values of $\spikesep(T)$ are then obtained through dilation.
    \item Use the bounds computed in the previous part to determine if there are choices of $u_1$ and $u_2$ that satisfy \Cref{lem:Regions}.
    \eqref{eq:QBound}, \eqref{eq:RawGradBound}, and \eqref{eq:RawEigBound} are used to bound $Q<1$, while \eqref{eq:QLowerBound} guarantees $Q>-1$ up to $u_2$.
    Note that this can be done efficiently since our bounds on $Q$ and its derivatives are constant on each segment.
    \item If $u_1$ and $u_2$ exist, report success.
\end{enumerate}
\begin{figure}[!t]
\begin{subfigure}{0.5\textwidth}
\includegraphics[scale=0.5]{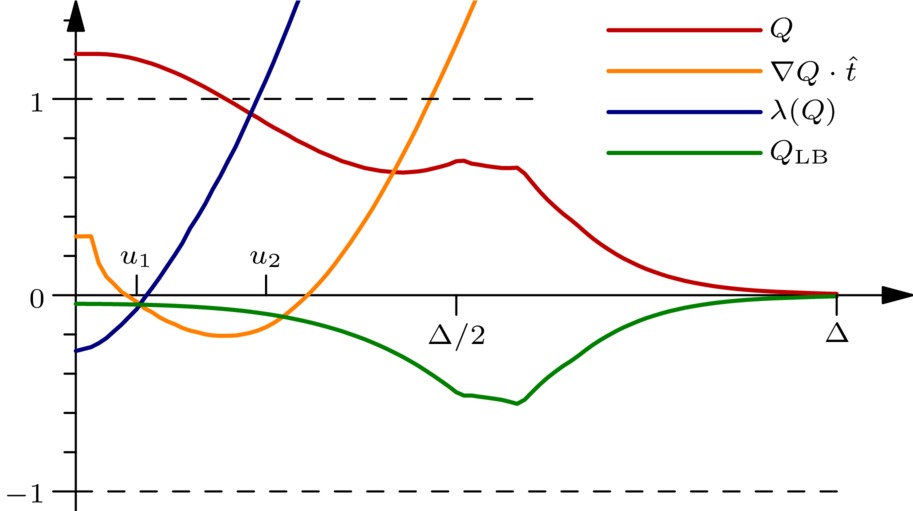}
\caption{\label{fig:QRecovery}}
\end{subfigure}
\begin{subfigure}{0.5\textwidth}
\centering
\includegraphics[scale=0.5]{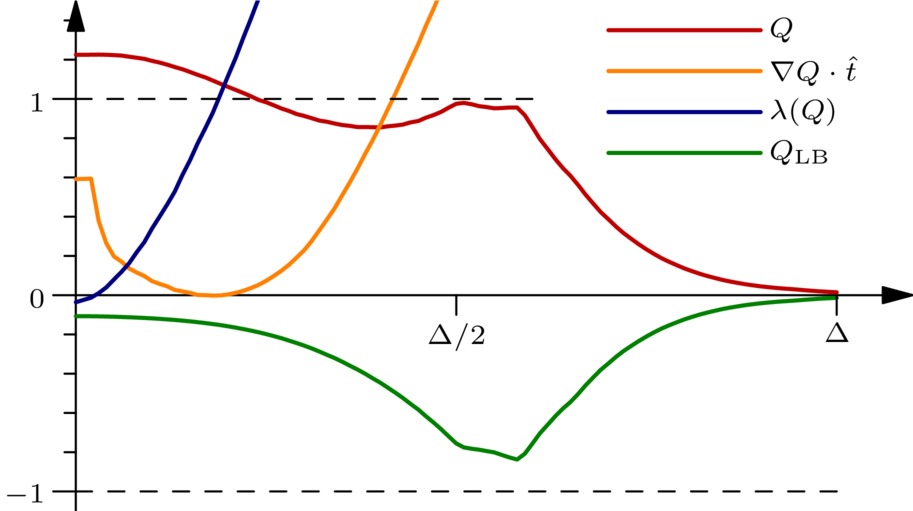}
\caption{\label{fig:QNoRecovery}}
\end{subfigure}
\caption{\label{fig:RecoveryExamples} \Cref{fig:QRecovery} illustrates the computations proving recovery when $\spikesep=4.15\sigma$ and $\gridsep\in[0.3\sigma,0.35\sigma]$. \Cref{fig:QNoRecovery} shows an example where recovery could not be proven when $\spikesep=4.2\sigma$ and $\gridsep\in[0.6\sigma,0.65\sigma]$.}
\end{figure}
The above procedure is applied using Mathematica\footnote{The code is available online at \url{https://github.com/jpmcd/Deconvolution2D}.} to resolve \Cref{lem:QBound} and establish the exact recovery region in \Cref{thm:Exact}.  
In \Cref{fig:RecoveryExamples} we illustrate this computation for two choices of minimum separation and grid spacing values, showing a case where recovery is possible and another where recovery cannot be proven. The curve $Q_{\LB}$ depicts the lower bound for $Q$ in \eqref{eq:QLowerBound}.
In \Cref{fig:QRecovery} the region where $\lambda(Q)(t)$ is negative overlaps with the region where $\nabla Q\cdot\hat{t}$ is negative which extends into the region where $\abs{Q}<1$. Possible choices for $u_1$ and $u_2$ are shown.

\section{Numerical Results}\label{sec:Numerical}

\subsection{Conditioning of Convolution Measurements with Fixed Support}\label{sec:SVD}



In stark contrast to compressed sensing, where randomized measurements preserve the norm of sparse vectors with high probability, the problem of deconvolving sparse signals can be ill-posed. Signals with clustered supports may yield essentially indistinguishable measurements after being convolved with a Gaussian kernel. Suppose $\tilde{x}=\sum_{t_j\in T}\tilde{a}_j\delta_{t_j}$ represents the difference between two signals where $\norm{\tilde{a}}_2=1$.
The observable difference in the signals measured at sample points $s_i$ is given by $\sum_{t_j\in T} \tilde{a_j}K(s_i-t_j)$.
If $\KK$ is the matrix with entries $K(s_i-t_j)$ then the $\ell_2$ norm of this difference, $\|\KK\tilde{a}\|_2$, ranges between the largest and smallest singular values of $\KK$.
In a noisy setting, if the noise is comparable to the size of the smallest singular values then the measured difference between the two signals can be completely corrupted by noise.
The two signals would then produce indistinguishable measurements. In order to characterize when the problem is ill-posed, we compute the singular values of $\KK$ numerically for signals with different separations. 

\begin{figure}[!t]
\begin{subfigure}{0.5\textwidth}
\centering
\includegraphics[scale=0.5]{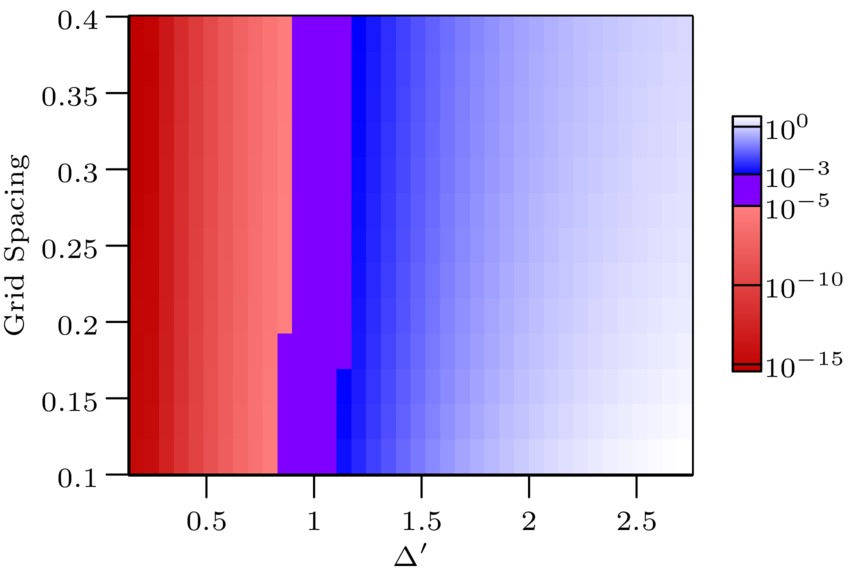}
\caption{\label{fig:gaussSingSepVsSepMin}Smallest singular value.}
\end{subfigure}
\begin{subfigure}{0.5\textwidth}
\includegraphics[scale=0.5]{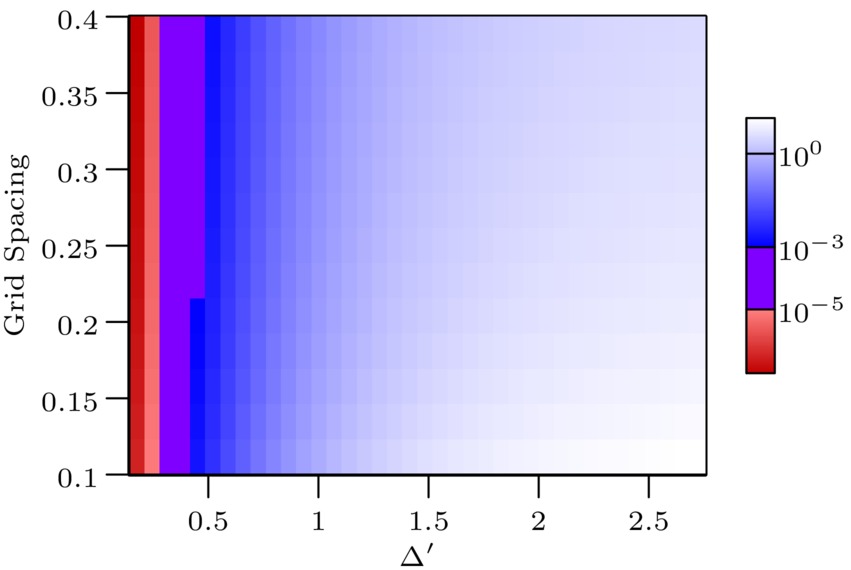}
\caption{\label{fig:gaussSingSepVsSepMed}Middle singular value.}
\end{subfigure}
\caption[Singular Values of the Convolution Kernel]{\label{fig:gaussSingSepVsSep}\Cref{fig:gaussSingSepVsSepMin} shows the value of the smallest singular value of the kernel matrix defined in \Cref{sec:SVD}. \Cref{fig:gaussSingSepVsSepMed} shows the value of the middle singular value. The units for both axes are in terms of $\sigma$.}
\end{figure}

In more detail, we fix an $8\times8$ grid of points $T=\{t_1,\ldots,t_{64}\}\in\RR^2$ with a separation of $\spikesep'$, and a square grid of samples $s_i$ separated by a fixed grid spacing $\gridsep$.
$T$ can be interpreted as the support of the difference between two signals.
We compute the singular values of $\KK$ for different values of $\spikesep'$. 
The smallest singular value corresponds to the smallest observable difference between any two signals with difference of size 1, a worst-case scenario.
The middle singular value quantifies the average observable difference.
We plot the smallest and middle singular values for different values of $\spikesep'$ and $\gridsep$ in \Cref{fig:gaussSingSepVsSep}.
For the smallest singular value a transition occurs for $\spikesep'$ around $\sigma$. Beyond that point its value diminishes dramatically as $\spikesep'$ decreases. The middle singular value reaches a similar transition at $\spikesep'$ equal to $0.5\sigma$.
The grid spacing does not noticeably affect the conditioning of $\KK$.
These results show that the deconvolution problem is ill posed for classes of signals clustered enough to allow for the minimum separation of their difference to be below $\sigma$, and essentially hopeless when the minimum separation fo their difference is below $0.5\sigma$. 

\subsection{Numerical Recovery of Signals in Two Dimensions}\label{sec:ExactSimulation}

In this section, we evaluate the numerical performance of convex programming for deconvolution in two dimensions. 
 We simulate signals consisting of 25 spikes with amplitudes sampled independently at random from a standard Gaussian distribution. The spikes are positioned on a hexagonal grid with separation $\spikesep$. We set the standard deviation of the Gaussian convolution kernel to $\sigma=1$. Samples of the convolution between the signals and the Gaussian kernel are measured on a square grid with separation $\gridsep$. Recovery is performed by solving Problem \eqref{pr:TVnormDisc} using CVX, a popular convex optimization library~\cite{CVX}. The recovery rate is the fraction of signals for which the $\ell_2$ norm of the difference between the estimated and true signal is below a small tolerance ($10^{-3}$).



\begin{figure}[!t]
\begin{subfigure}{.49\textwidth}
\centering
\includegraphics[scale=0.5]{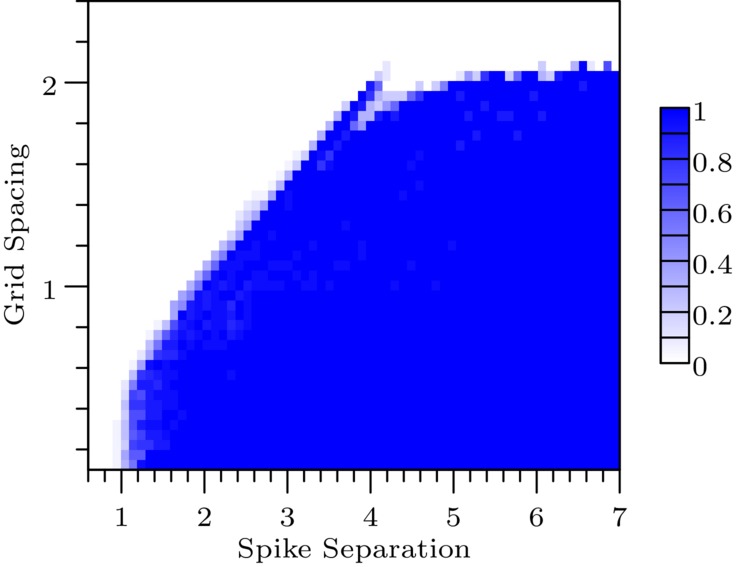}
\caption{\label{fig:ExactCVXFine}}
\end{subfigure}
\begin{subfigure}{.49\textwidth}
\centering
\includegraphics[scale=0.5]{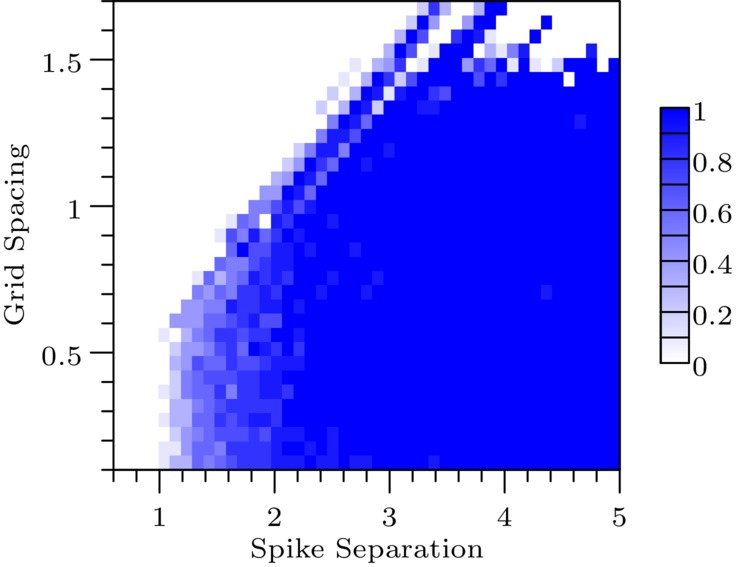}
\caption{\label{fig:ExactCVXLtd}}
\end{subfigure}
\centering
\caption[Numerical Recovery Simulations]{\label{fig:ExactCVX} 
The plots show the results of the numerical experiments described in \Cref{sec:ExactSimulation}. \Cref{fig:ExactCVXFine} shows the fraction of exact recovery for different values of the grid spacing and spike separation. \Cref{fig:ExactCVXLtd} shows results for an artificial sampling pattern inspired by our proof technique, where the measurements consist of three samples close to each spike.}
\end{figure}

The results are shown in \Cref{fig:ExactCVX}. The region of exact recovery is larger than the region in our theoretical guarantees (see \Cref{fig:RecoveryFigures}). When the grid spacing ranges between $0.1\sigma$ and $0.5\sigma$, exact recovery occurs for spike separations greater than roughly $1.4\sigma$. 
The grid spacing of the samples can be as large as $2\sigma$, as long as the spikes are separated enough.

In the proof of our theoretical results, we only use the three nearest samples to each spike. 
In order to evaluate to what extent this may artificially limit the results of the analysis, in \Cref{fig:ExactCVXLtd} we present results for an articial sampling pattern that only contains three samples close to each spike.
This results in a smaller recovery region with fuzzier borders. Exact recovery occurs beyond a spike separation of approximately $2\sigma$ for small grid spacings, and up to a grid spacing of about $1.5 \sigma$ for large spike separations. 



\subsection{Simulations with Convolution Kernels from Microscopy and Telescopy}


In this section we report numerical simulations with point spread functions from two application areas: microscopy and telescopy imaging. Our aim is to show that deconvolution via convex programming yields similar results for these kernels behave as for the Gaussian kernel that is the subject of our theoretical analysis. 
For microscopy, we follow \cite{zhu2012}, which proposed applying $l_1$-norm minimization to perform deconvolution in the context of fluorescence microscopy. The authors experimentally measure the point spread function of the microscope, and find that the radial profile is well fit by a centered Gaussian with smaller off-center ridges, as illustrated on the left of \Cref{fig:ExactCVXMicro}. The precise expression for the point-spread function is 
\begin{equation}
K(x) = e^{-\frac{2\norm{x-x_0}^2}{(1.72)^2}}+0.0208 e^{- \frac{2(\norm{x-x_0}-2.45)^2}{(1.10)^2}}.
\label{eq:MicroKernel}
\end{equation}
For telescopy, we consider a popular model for the point-spread function, the Airy kernel~\cite{goodmanOptics}:
\begin{equation}\label{eq:TeleKernel}
K(x)=\left(\frac{2J_1(3.8317\norm{x})}{3.8317\norm{x}}\right)^2,\ K(0)=1,
\end{equation}
where $J_1$ is the first-order Bessel function of the first kind. The constant factor $3.8317$ scales the kernel so that its minimum occurs at approximately $\norm{x}=1$.

\begin{figure}[!t]
\centering
\includegraphics[scale=0.5]{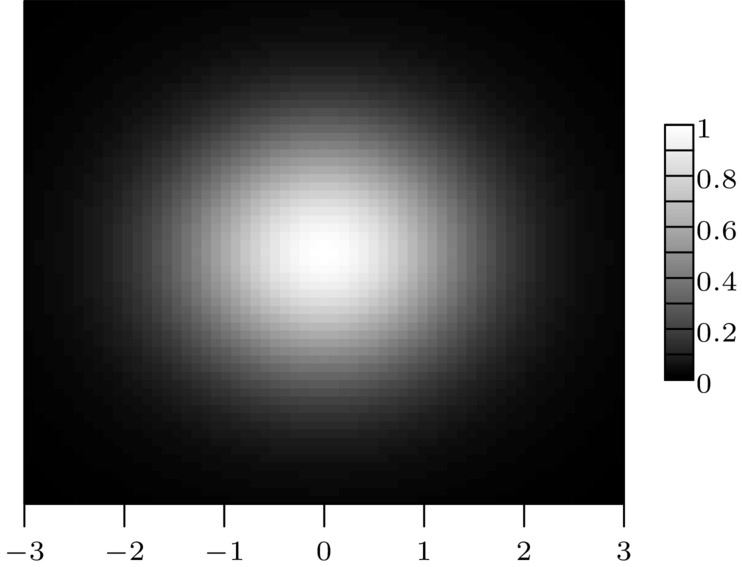}
\includegraphics[scale=0.5]{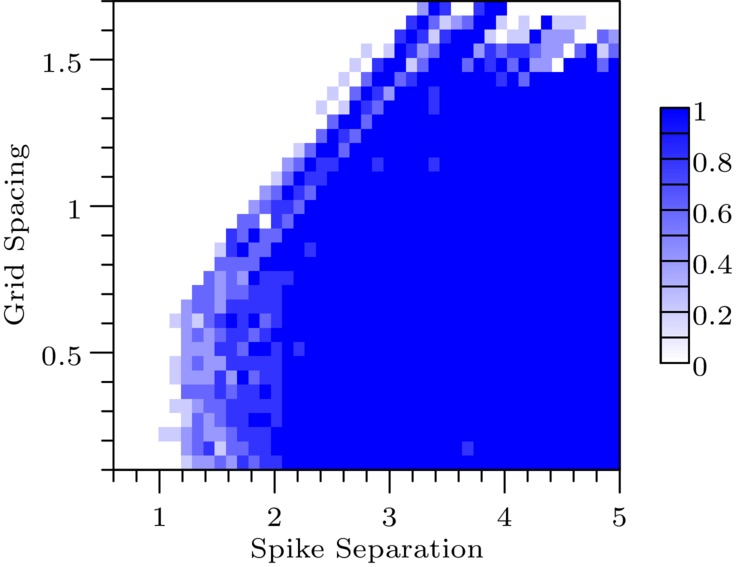}
\caption[Microscopy Simulation]{\label{fig:ExactCVXMicro}Microscopy kernel used in our simulations (left). Fraction of exact recovery for different values of the grid spacing and spike separation (right).}
\end{figure}
\begin{figure}[!t]
\centering
\includegraphics[scale=0.5]{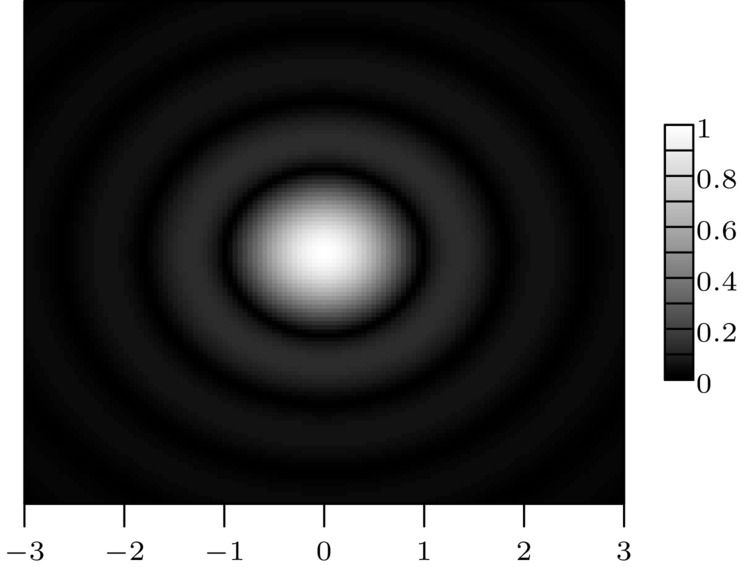}
\includegraphics[scale=0.5]{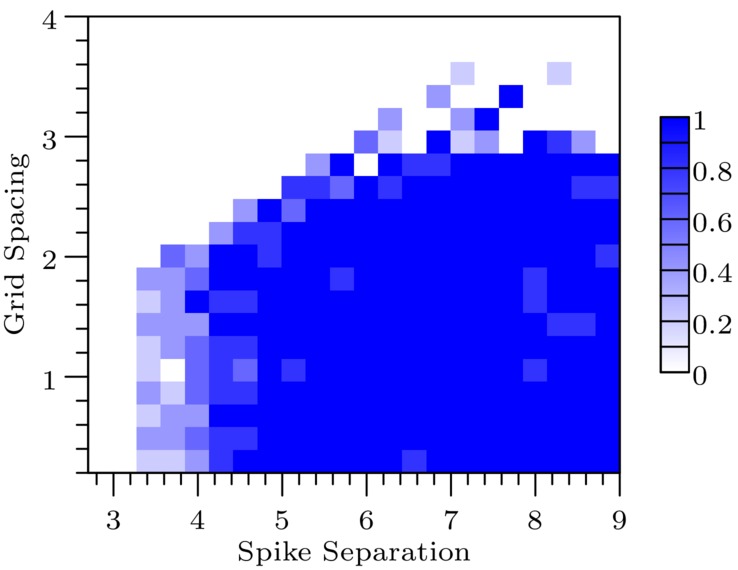}
\caption[Telescopy Simulation]{\label{fig:ExactCVXTele} Square root of the Airy kernel defined in Eq.\eqref{eq:TeleKernel} (left) (we take the square root to make the surrounding rings more apparent). Fraction of exact recovery for different values of the grid spacing and spike separation (right).}
\end{figure}

\Cref{fig:ExactCVXMicro} shows the results of repeating the numerical experiments described in \Cref{sec:ExactSimulation} for the microscopy kernel. 
The parameters $\spikesep$ and $\gridsep$ are scaled by $\sigma_0$ where $2\sigma_0^2 = 1.72^2/2$, the exponential denominator in the first term in the kernel's sum.
The results are similar to those for the Gaussian kernel and suggest that successful recovery can be achieved when the spike separation $\spikesep\geq 2\sigma_0$. 

The Airy pattern has a significantly different shape from the microscopy and Gaussian kernels. \Cref{fig:ExactCVXTele} shows an image of this kernel \eqref{eq:TeleKernel}; note the faint sequence of rings surrounding the bright center. Nonetheless, the results of repeating the numerical results described in \Cref{sec:ExactSimulation} for this kernel are very similar, as shown in \Cref{fig:ExactCVXTele}; exact recovery again occurs for a large enough spike separation.

\section{Conclusion}\label{sec:Conclusion}
In this work we prove a sampling theorem for Gaussian deconvolution in two-dimensions.  In particular, we give an explicit region of minimum-separation and grid-spacing values where convex optimization exactly recovers the true signal. This extends the results of \cite{bernstein2018} to two dimensions, a setting important in many applications.  We give numerical evidence that our results extend to two non-Gaussian convolution kernels arising in microscopy and telescopy.
\bdb{
The extension to two dimensions is accomplished by a geometric approach, where the plane is partitioned into sets that contain at most one spike.
This provides a strategy to extend the proof to higher dimensions: define an interpolation function using nearby samples (this would require $d+1$ samples per spike in $d$ dimensions), reparametrize the function using extensions of bumps and waves to $d$ dimensions, partition the space, and analyze the corresponding function on the partition by exploiting a minimum separation condition.}

An interesting direction for future research is to find a new dual-certificate construction that utilizes all of the sample data (our presented construction only uses the closest three samples to each spike).  This could bridge the gap between our theoretical results and the numerical experiments in \Cref{sec:ExactSimulation}. Other directions of future research include the analysis of the discretization error incurred when solving the $\ell_1$-norm minimization problem on a grid, and obtaining recovery guarantees for blind-deconvolution settings where the convolution kernel must be jointly estimated from the data.

\subsection*{Acknowledgements}
This research was enabled by NSF NRT-HDR Award 1922658. C.F. was supported by NSF award DMS-1616340. B.B.~is generously supported by the MacCracken Fellowship, and the Isaac Barkey and Ernesto Yhap Fellowship.

\bibliographystyle{abbrv}
\bibliography{refs} 

\appendix
\renewcommand{\appendixpagename}{Appendix}
\appendixpage
\section{Proof of \Cref{cor:DiscreteTV} and \Cref{prop:DualCert}}\label{sec:DualCertProof}
\DiscreteTV*
\begin{proof}
Problem \eqref{pr:TVnormDisc} is equivalent to problem \eqref{pr:TVnorm} restricted to measures supported on $G$. As
a result, any respective solutions $\hat{a}$ and $\hat{\mu}$ must satisfy $\normone{\hat{a}} \geq \normTV{\hat{\mu}}$. By \Cref{thm:Exact}, \eqref{pr:TVnorm} is
uniquely minimized by $\mu$. Since $\mu$ is supported on $T$, $a$ is the unique solution of \eqref{pr:TVnormDisc}.
\end{proof}

\DualCert*

\begin{proof}
This proof is identical to that in \cite[Appendix A]{bernstein2018} and is included for completeness. Suppose $\nu$ is feasible for problem \eqref{pr:TVnorm}. Then if $h=\nu-\mu$ by the Lebesgue decomposition $h=h_T+h_{T^c}$ where $h_T$ is absolutely continuous with respect to $\mu$ and $h_{T^c}$ is singular with respect to $\mu$. Thus $h_T$ can be written as
\begin{equation}
h_T = \sum_{j=1}^N b_j\delta_{t_j}
\end{equation}
where $t_j\in T$. Since both $\mu$ and $\nu$ are feasible we have that $(K\ast h)(s_i)=0$ for $s_i\in S$. From the assumptions, let $Q(t)=\sum_{i=1}^n q_i K(s_i-t)$ be such that $Q(t_j)=\sign(b_j)$. Then
\begin{align}
0 &= \sum_{i=1}^n q_i (K\ast h)(s_i) = \sum_{i=1}^n q_i \int K(s_i-t)\ dh(t)\\
& = \int Q(t)\ dh(t)
=\normTV{h_T} + \int Q(t)\ dh_{T^c}(t).
\end{align}
Then the total variation norm of $\nu$ is
\begin{align}
\normTV{\nu}&=\normTV{\mu+h_{T}} + \normTV{h_{T^c}}\\
&\geq \normTV{\mu} - \normTV{h_{T}} + \normTV{h_{T^c}}\\
&=\normTV{\mu} + \int Q(t)\ dh_{T^c}(t) + \normTV{h_{T^c}}\\
&\geq \normTV{\mu}
\end{align}
The last inequality is strict if $\normTV{h_{T^c}}>0$ since $\abs{Q(t)}<1$ on $T^c$. Thus $\mu$ is optimal. Since any other optimal solution must be supported on $T$ we have $\normTV{h_T}=\int Q(t)\ dh_{T^c}(t)=0$, and so $\mu$ is unique.
\end{proof}

\section{Reference Table for Exact Recovery}\label{sec:RecoveryTable}
\bdb{\Cref{tab:RecoveryTable} provides a quantitative description of the boundaries of the recovery region for \Cref{thm:Exact} shown in \Cref{fig:RecoveryMapFirst}, where the values for $\gridsep$ and $\spikesep(T)$ are given in units of $\sigma$. The right column indicates the smallest $\spikesep(T)$ at which \Cref{thm:Exact} guarantees recovery if $\gridsep$ belongs to the intervals in the left column.}
\begin{table}[h]
\centering
\begin{tabular}{|c|c|}
    \hline
    $\gridsep$ (in $\sigma$) & $\spikesep(T)$ (in $\sigma$) \\ \hline
    0.10--0.15 & 4.10\\
    0.15--0.20 & 4.10\\
    0.20--0.25 & 4.10\\
    0.25--0.30 & 4.10\\
    0.30--0.35 & 4.10\\
    0.35--0.40 & 4.10\\
    0.40--0.45 & 4.10\\
    0.45--0.50 & 4.15\\
    \hline
\end{tabular}\qquad
\begin{tabular}{|c|c|}
    \hline
    $\gridsep$ (in $\sigma$) & $\spikesep(T)$ (in $\sigma$) \\ \hline
    0.50--0.55 & 4.15\\
    0.55--0.60 & 4.20\\
    0.60--0.65 & 4.25\\
    0.65--0.70 & 4.30\\
    0.70--0.75 & 4.35\\
    0.75--0.80 & 4.40\\
    0.80--0.85 & 4.50\\
    0.85--0.89 & 4.55\\
    \hline
\end{tabular}
\caption{\label{tab:RecoveryTable} The table provides the minimum value of $\spikesep(T)$ at which recovery is guaranteed for each interval for $\gridsep$. The values for $\gridsep$ and $\spikesep(T)$ are given in units of $\sigma$.}
\end{table}

\section{Proofs for Bumps and Waves}
\subsection{Proof that Bump and Wave Functions Exist} \label{sec:BWExistProof}
Here we show that the bump and wave functions used in our dual certificate construction exist.  We also establish that the bumps are formed from a \textit{non-negative} linear combination of sample-centered Gaussians, a fact used in \Cref{sec:EVEnvelopes} where we bound the Hessian of the bump.

Combining equations \eqref{eq:BWdefs}, \eqref{eq:Binterp} and \eqref{eq:Winterp} for a fixed $j$ gives the linear system (where $K$ is the Gaussian kernel):
\begin{equation}\label{eq:bwlin}
\begin{bmatrix}
K(s_j^1-t_j)&K(s_j^2-t_j)&K(s_j^3-t_j)\\
\partial_xK(s_j^1-t_j)&\partial_xK(s_j^2-t_j)&\partial_xK(s_j^3-t_j)\\
\partial_yK(s_j^1-t_j)&\partial_yK(s_j^2-t_j)&\partial_yK(s_j^3-t_j)
\end{bmatrix}
\begin{bmatrix}
\bcone_j&\wocone_j&\wtcone_j\\
\bctwo_j&\woctwo_j&\wtctwo_j\\
\bcthree_j&\wocthree_j&\wtcthree_j
\end{bmatrix}
=
\begin{bmatrix}
1&0&0\\
0&1&0\\
0&0&1
\end{bmatrix}
\end{equation}
Thus the coefficient matrix is the inverse of the matrix with kernel measurements at the spike $t_j$. We will show that this inverse exists and thus these functions are well-defined.
Since we are considering the bump and waves from a single spike in the following proof we omit the subscript $j$ where it is convenient. Additionally the first and second coordinates of $t$ and $s^i$ are denoted by $t_{(1)}$ and $t_{(2)}$  and $s^i_{(1)}$ and $s^i_{(2)}$.
\begin{restatable}[Proof in \Cref{sec:BWExistProof}]{lemma}{BWExist}\label{lem:BWExist}
The coefficients for the bump $B_j$ and waves $W^1_j$ and $W^2_j$ are given by
\begin{align}
\label{eq:coeffmat}
\begin{bmatrix}
\bcone_j&\wocone_j&\wtcone_j\\
\bctwo_j&\woctwo_j&\wtctwo_j\\
\bcthree_j&\wocthree_j&\wtcthree_j
\end{bmatrix}
&=\frac{1}{D}
\begin{bmatrix}
e^{\|s^1-t\|^2/2}&0&0\\
0&e^{\|s^2-t\|^2/2}&0\\
0&0&e^{\|s^3-t\|^2/2}
\end{bmatrix}
\begin{bmatrix}
D_1 & (s^2_{(2)}-s^3_{(2)}) & (s^3_{(1)}-s^2_{(1)})\\
D_2 & (s^3_{(2)}-s^1_{(2)}) & (s^1_{(1)}-s^3_{(1)})\\
D_3 & (s^1_{(2)}-s^2_{(2)}) & (s^2_{(1)}-s^1_{(1)})
\end{bmatrix}
\end{align}
when $D\neq0$.
Here
\begin{equation}
\label{eq:deq}
\begin{aligned}
D_1&=(s^2_{(1)}-t_{(1)})(s^3_{(2)}-t_{(2)})-(s^3_{(1)}-t_{(1)})(s^2_{(2)}-t_{(2)})\\
D_2&=(s^3_{(1)}-t_{(1)})(s^1_{(2)}-t_{(2)})-(s^1_{(1)}-t_{(1)})(s^3_{(2)}-t_{(2)})\\
D_3&=(s^1_{(1)}-t_{(1)})(s^2_{(2)}-t_{(2)})-(s^2_{(1)}-t_{(1)})(s^1_{(2)}-t_{(2)})
\end{aligned}
\end{equation}
with $D=D_1+D_2+D_3$.
\end{restatable}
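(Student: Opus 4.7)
The plan is to invert the $3\times 3$ coefficient matrix on the left-hand side of \eqref{eq:bwlin} by first extracting the scalar $K(s^i-t)$ from column $i$, and then computing the remaining inverse via the adjugate formula. Since $K(x)=\exp(-\norm{x}^2/2)$, differentiation gives
\begin{equation*}
\partial_x K(s^i-t) = (s^i_{(1)}-t_{(1)})\,K(s^i-t),\qquad \partial_y K(s^i-t) = (s^i_{(2)}-t_{(2)})\,K(s^i-t),
\end{equation*}
so the $i$-th column of the matrix in \eqref{eq:bwlin} equals $K(s^i-t)$ times the vector $(1,\,s^i_{(1)}-t_{(1)},\,s^i_{(2)}-t_{(2)})^{\top}$. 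Letting $V$ be the $3\times 3$ matrix whose columns are these three unit-first-entry vectors, the matrix in \eqref{eq:bwlin} factors as $V\cdot\operatorname{diag}(K(s^1-t),K(s^2-t),K(s^3-t))$, and its inverse is $\operatorname{diag}(e^{\norm{s^i-t}^2/2})\,V^{-1}$. This already explains the diagonal exponential factor in front of the right-hand side of \eqref{eq:coeffmat}.

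What remains is to verify that $V^{-1}=\operatorname{adj}(V)/\det V$ matches the rightmost matrix in \eqref{eq:coeffmat}. Cofactor-expanding $\det V$ along its top row of ones produces three signed $2\times 2$ minors which, after applying the cofactor signs, are exactly $D_1$, $D_2$, and $D_3$ from \eqref{eq:deq}; hence $\det V = D_1+D_2+D_3 = D$, and invertibility is equivalent to the hypothesis $D\neq0$. The cofactors of the first column of $V$ (the entries of ones) are $D_1, D_2, D_3$ themselves, which end up in the first column of $V^{-1}$ after transposition. The six remaining cofactors come from $2\times 2$ minors involving a row of ones together with either the $x$-row or the $y$-row of $V$, and they reduce to pairwise differences of sample coordinates; once the $(-1)^{i+j}$ chessboard is applied and the matrix is transposed, the $y$-coordinate differences populate the second column and the $x$-coordinate differences the third, yielding precisely the formula in \eqref{eq:coeffmat}.

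The argument is purely algebraic, so no deep obstacle arises. The main item requiring care is matching signs between the cyclic conventions used for $D_1,D_2,D_3$ in \eqref{eq:deq} and the $(-1)^{i+j}$ pattern from the cofactor expansion, together with the transposition that converts the cofactor matrix to the adjugate. The nonvanishing of $D$ is simply the geometric condition that $s^1,s^2,s^3$ are not collinear; this holds automatically in the setting of the paper, where $s^1,s^2,s^3$ are the three distinct nearest grid samples to $t_j$, and is handled outside the present lemma.
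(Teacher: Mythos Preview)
Your proposal is correct and follows essentially the same approach as the paper: factor the matrix in \eqref{eq:bwlin} as $V\cdot\operatorname{diag}(K(s^i-t))$ using the derivative identities, then invert. The paper simply writes out this factorization and says ``the result follows by inverting,'' whereas you additionally spell out the adjugate computation that identifies $\det V=D$ and matches the cofactors to the entries of \eqref{eq:coeffmat}; this extra detail is helpful but not a different method.
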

\begin{proof}
If
\begin{align}
& \begin{bmatrix}
 K(s^1-t)&K(s^2-t)&K(s^3-t)\\
 \partial_xK(s^1-t)&\partial_xK(s^2-t)&\partial_xK(s^3-t)\\
 \partial_yK(s^1-t)&\partial_yK(s^2-t)&\partial_yK(s^3-t)
 \end{bmatrix}\\
 &=
\begin{bmatrix}
K(s^1-t) & K(s^2-t) & K(s^3-t)\\
-(t_{(1)}-s_{(1)}^1)K(s^1-t) & -(t_{(1)}-s_{(1)}^2)K(s^2-t) & -(t_{(1)}-s_{(1)}^3)K(s^3-t)\\
-(t_{(2)}-s_{(2)}^1)K(s^1-t) & -(t_{(2)}-s_{(2)}^2)K(s^2-t) & -(t_{(2)}-s_{(2)}^3)K(s^3-t)
\end{bmatrix}\\
&=
\begin{bmatrix}
1 & 1 & 1\\
(s_{(1)}^1-t_{(1)}) & (s_{(1)}^2-t_{(1)}) & (s_{(1)}^3-t_{(1)})\\
(s_{(2)}^1-t_{(2)}) & (s_{(2)}^2-t_{(2)}) & (s_{(2)}^3-t_{(2)})
\end{bmatrix}
\begin{bmatrix}
e^{-\|s^1-t\|^2/2}&0&0\\
0&e^{-\|s^2-t\|^2/2}&0\\
0&0&e^{-\|s^3-t\|^2/2}
\end{bmatrix}
\end{align}
then the result follows by inverting.
\end{proof}
\begin{lemma}\label{lem:crossprod}
Let $D$ be defined as in \Cref{lem:BWExist}.
Then $|D|=\gridsep^2\neq0$ and the $D_i$'s each have the same sign, so $0\leq D_i/D\leq1$ for $i=1,2,3$.  \end{lemma}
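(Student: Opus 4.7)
The plan is to interpret $D$ and the $D_i$ geometrically as twice the signed areas of triangles, and then leverage the structure of the uniform square grid.

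First, observe that each $D_i$ is the two-dimensional cross product of two edge vectors emanating from $t$: e.g.\ $D_1 = (s^2 - t) \times (s^3 - t)$, which equals twice the signed area of the triangle $\triangle(t, s^2, s^3)$, and analogously for $D_2, D_3$. Expanding each cross product by bilinearity, the terms linear in $t$ cancel by antisymmetry, and the sum collapses to
\[
D = D_1 + D_2 + D_3 = (s^2 - s^1) \times (s^3 - s^1),
\]
i.e.\ twice the signed area of $\triangle(s^1, s^2, s^3)$, a quantity that does not depend on $t$.

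Next, I would exploit the fact that the $s^i$ are the three samples on a uniform square grid of spacing $\gridsep$ nearest to $t$. A case analysis based on which quadrant of the cell containing $t$ the spike lies in shows that these three samples are precisely three corners of the cell, namely the three not diagonally opposite to $t$'s quadrant. These three corners form a right isoceles triangle with legs of length $\gridsep$, whose area equals $\gridsep^2/2$, so $|D| = \gridsep^2$ and in particular $D \neq 0$. The same case analysis shows that $t$ always lies inside the closed triangle $\triangle(s^1, s^2, s^3)$: for instance, if $t$ belongs to the quadrant of the cell closest to a corner $s^1 = (a_1, a_2)$, then the farthest corner is $(a_1 + \gridsep, a_2 + \gridsep)$ and the triangle $s^1 s^2 s^3$ is the half-cell $\{(x,y) : (x - a_1) + (y - a_2) \leq \gridsep\}$, which contains that quadrant; the other three quadrants are handled symmetrically.

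Since $t$ lies in the closed triangle $\triangle(s^1, s^2, s^3)$, the three sub-triangles $\triangle(t, s^2, s^3)$, $\triangle(t, s^3, s^1)$, $\triangle(t, s^1, s^2)$ triangulate it with consistent orientation. Hence their signed areas all share the same sign as that of $\triangle(s^1, s^2, s^3)$, which gives the sign claim for the $D_i$, and $|D_1| + |D_2| + |D_3| = |D|$, which gives $0 \leq D_i/D \leq 1$. The main obstacle is purely geometric: rigorously verifying which three grid points are the nearest neighbors of a generic $t$, and that $t$ lies in their convex hull, requires a per-quadrant case analysis and a tiebreaking convention on the measure-zero set of points on cell midlines or at cell centers where several corners are equidistant. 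These cases are tedious but routine.
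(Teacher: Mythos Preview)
Your proposal is correct and follows essentially the same geometric argument as the paper: identify each $D_i$ as a $2\times 2$ determinant giving (twice) a signed area, observe that $D=D_1+D_2+D_3$ is twice the signed area of $\triangle(s^1,s^2,s^3)$, and conclude using the fact that $t$ lies in this right triangle so the three sub-triangles inherit a common orientation. The paper's proof simply takes ``$t$ sits in the right triangle with vertices $s^1,s^2,s^3$'' as an assumption, whereas you spell out the per-quadrant case analysis verifying it; otherwise the two arguments coincide.
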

\begin{proof}
By assumption the spike location $t$ sits in the right triangle with vertices given by $s^1$, $s^2$, and $s^3$, its three closest samples.  Let $v^i:=s^i-t$ for $i=1,2,3$.  Then the determinants of the matrices $[v^2\ v^3]$, $[v^3\ v^1]$, and $[v^1\ v^2]$ all have the same sign, since they are the signed areas of parallelograms with the same orientation.  These determinants are precisely $D_1$, $D_2$, and $D_3$, respectively. Furthermore, $|D|=|D_1+D_2+D_3|$ gives twice the area of the right triangle with vertices $s^1$, $s^2$, and $s^3$, which is exactly $\gridsep^2$.
\end{proof}

\subsection{Envelope Construction}\label{sec:Envelopes}

\Cref{eq:Envelopes} presents a construction of the envelopes providing radially symmetric upper bounds for any bump or wave and their derivatives. These envelopes must account for all possible positions of the spike relative to its three nearest sample points. To compute such an envelope we rely on Mathematica's Interval Arithmetic package \cite{Mathematica}. A detailed description of interval arithmetic is given in Appendix B.6 of \cite{bernstein2018}. This package computes hard limits on the possible range of a specified function depending on the range of its arguments and the operators used. If the range of each argument is narrow then bounds on the range of values the function takes is also narrow and a relatively sharp upper bound on the function can be obtained for that region in parameter space.

Below we describe how we compute the envelope for a bump function, but the same method applies to the wave, and the bump and wave derivatives.
Five parameters determine the value of a bump $B_1$ \bdb{corresponding to a spike location $t_1$} at a given position $t\in\RR^2$: the grid spacing $\gridsep$, the two-dimensional spike offset $u=t_1-s_1^1$, and the two-dimensional positional argument $t$.
\bdb{For simplicity of exposition, we assume without loss of generality that $t_1$ is located at the origin.}
The offset $u$ gives the difference between the spike and its nearest sample point.  Combined with the grid separation $\gridsep$, the offset determines the position of the other two samples $s_1^2$ and $s_1^3$. $\gridsep$ and $u$ affect the values of the coefficients $\coeffone$, $\coefftwo$ and $\coeffthree$ and thus the shape of the function while $t$ specifies where the function is being measured. We partition the space for these parameters into sections $I_\gridsep$, $I_u$ and $I_t$ defined as follows:
\begin{equation}
\begin{gathered}
I_\gridsep(k)=[0.1+0.8(k-1)/16,0.1+0.8k/16],\ 1\leq k\leq 16,\\
I_t(j,k)=[-10+(j-1)/40,-10+j/40]\times[-10+(k-1)/40,-10+k/40],\ 1\leq j,k\leq 800,\\
I_u(j,k)=[(j-1)\gridsep/40, j\gridsep/40]\times [(k-1)\gridsep/40, k\gridsep/40],\ 1\leq j,k\leq 20.
\end{gathered}
\end{equation}
\bdb{The number of intervals in each partition are selected so that the resulting envelopes are sharp enough to facilitate the remainder of the proof.}
Fixing values for $k_i$ for $i=1,2,3,4,5$ we use interval arithmetic to compute an upper bound $\tilde{B}$ satisfying
\begin{equation}
\wt{B}(k_1,k_2,k_3,k_4,k_5)
\geq \sup_{\substack{\gridsep\in I_\gridsep(k_1)\\
t\in I_t(k_2,k_3)\\ t_1-s_1^1\in I_u(k_4,k_5)}} |B_1(t;s_1^1,s_1^2,s_1^3)|.
\end{equation}
By symmetry, and since we are taking absolute values, we only consider values of $u$ with non-negative coordinates that are smaller than $\gridsep/2$.  We only consider values of $t$ in $[-10,10]^2$ since by \Cref{lem:BumpBounds} and \Cref{lem:WaveBounds} all bumps, waves and their derivatives are smaller in absolute value than $2\cdot 10^{-9}$ for $\norm{t}\geq 10$.

Using $\wt{B}$ we can compute an upper bound on $\enva{B}(r)$ for $r\leq 10$ and for a fixed value of $\gridsep$ as follows:
\begin{equation}
    \enva{B}(r)
    \leq \max_{k_2,k_3,k_4,k_5}\max(\wt{B}(k_1,k_2,k_3,k_4,k_5),2\cdot 10^{-9}),
\end{equation}
where $\gridsep\in I_\gridsep(k_1)$, $k_2,k_3$ range over all values where $I_t(k_2,k_3)$ contains a point $t$ with $\|t\|\geq r$, and $k_4,k_5$ take all possible values.
Since there are only finitely many possible intervals $I_t(j,k)$, there are only finitely many possible values of $\enva{B}(r)$.
Note that a separate envelope $\enva{B}(r)$ is computed for each $k_1$ specifying the range of $\gridsep$.
Wave and derivative envelopes are calculated similarly.

\subsubsection{Bump Directional Derivative Envelope Construction}\label{sec:ConstructingDDEnvelopes}

To obtain a discretized representation of the upper bound $\Dt$ in \eqref{eq:RawEnvelopes} for the directional derivative of bump functions, define the following function:
\begin{equation}\label{eq:Dformula}
\Dt(r;\gridsep)=\sup_{\substack{\norm{t-t_1}=r\\ \text{$s^1_1,s^2_1,s^3_1$ nearest $t_1$}}}
\partial_x B_1(t)\cdot\frac{(t-t_1)_{(1)}}{\norm{t-t_1}}+
\partial_y B_1(t)\cdot\frac{(t-t_1)_{(2)}}{\norm{t-t_1}}\ ,
\end{equation}
where the supremum is taken over all configurations of $t_1$'s nearest three samples $s^1_1$, $s^2_1$, and $s^3_1$ with grid spacing $\gridsep$ and points $t=(t_{(1)},t_{(2)})$ a distance $r$ from $t_1$.
For convenience we assume that $t_1$ sits at the origin.
Note $\Dt$ is a function of $\gridsep$, is not monotonic as a function of $\|t\|$, and should be negative when $\norm{t}$ is small for sufficiently large $\spikesep$ (i.e., the bump envelope decays). 

By partitioning the parameters for $\gridsep$, spike offset $u$ and positional argument $t$ into intervals $I_\gridsep$, $I_u$ and $I_t$ we can use Interval Arithmetic to compute a non-monotonic upper bound on $\Dt$ that bounds the directional derivative.
Recall that $I_t$ and $I_u$ are intervals in $\RR^2$ or rectangles.
Fix $k_1$ to specify $\gridsep$'s range, i.e. $\gridsep\in I_\gridsep(k_1)$. Define
\begin{gather}
\wt{\Dt}(k_1,k_2,k_3,k_4,k_5):=
\sup_{\substack{\gridsep\in I_\gridsep(k_1)\\
t\in I_t(k_2,k_3)\\ s_j^1\in I_u(k_4,k_5)}}
\partial_x B(t;t_1,s_j^1,s_j^2,s_j^3)\cdot\frac{t_{(1)}}{\norm{t}}+
\partial_y B(t;t_1,s_j^1,s_j^2,s_j^3)\cdot\frac{t_{(2)}}{\norm{t}}.
\end{gather}
Then
\begin{align}
\Dt(r;\gridsep)&\leq\max_{k_2,k_3,k_4,k_5}\max(\wt{\Dt}(k_1,k_2,k_3,k_4,k_5),2\cdot 10^{-9})\label{eq:DQDisc}
\end{align}
where the maximum is taken over values of $k_2$, $k_3$ where $I_t(k_2,k_3)$ contains a point $t$ with $\|t\|=r$, and $k_4$, $k_5$ take all possible values.
The right side of \eqref{eq:DQDisc} gives a discretized envelope, and a separate envelope is obtained for each choice of $k_1$.

\subsubsection{Eigenvalue Envelope Construction}\label{sec:EVEnvelopes}

To bound the largest eigenvalue of the bumps, and the largest absolute eigenvalues of the bump and waves, we construct envelope functions for $\lambda(B)$, $\lambda(W^i)$ and $\lambda(B)_\infty$ just as we did for the functions and their derivatives.
As in \eqref{eq:Envelopes} and \eqref{eq:RawEnvelopes} define $\lambda(B)_\infty$ by taking the supremum over points at the same distance from $t_j$ and over all positions of $t_j$ with respect to its three nearest samples $s^1_j$, $s^2_j$ and $s^3_j$ and unit vectors $v$:
\begin{equation}\label{eq:EVnonmon}
\lambda(B)_\infty(r):=\sup_{\substack{\norm{t-t_j}=r\\ s_j^1,s_j^2,s_j^3\mbox{ nearest }t_j\\ \norm{v}=1}}v^T\nabla^2 B_j(t;t_j,s^1_j,s^2_j,s^3_j) v.
\end{equation}
$\ltamaxb$ and $\enva{\lambda(W^i)}$ are monotonized by taking the supremum over $\norm{t-t_1}\geq r$ for $i\in{1,2}$:
\begin{equation}\label{eq:EVmon}
\begin{aligned}
\ltamaxb(r)&:=\sup_{\substack{\norm{t-t_j}\geq r\\ s_j^1,s_j^2,s_j^3\mbox{ nearest }t_j\\ \norm{v}=1}}\abs{v^T\nabla^2 B_j(t;t_j,s^1_j,s^2_j,s^3_j) v}\\
\enva{\lambda(W^i)}(r)&:=\sup_{\substack{\norm{t-t_j}\geq r\\ s_j^1,s_j^2,s_j^3\mbox{ nearest }t_j\\ \norm{v}=1}}\abs{v^T\nabla^2 W^i_j(t;t_j,s^1_j,s^2_j,s^3_j) v}.
\end{aligned}
\end{equation}
To simplify these for something easier to compute, we first derive a form for the contribution from each of the three Gaussian terms in a bump or wave. The Hessian of $f(x) = e^{-\norm{x}^2/2}$ is:
\begin{equation}
\nabla^2 f(x)=\begin{bmatrix}
(x_1^2-1)&x_1x_2\\
x_1x_2&(x_2^2-1)
\end{bmatrix}\cdot e^{-\norm{x}^2/2},
\end{equation}
so the eigenvalues are $\lambda e^{-\norm{x}^2/2}$ such that
\begin{equation}
\begin{aligned}
0&=(x_1^2-1-\lambda)(x_2^2-1-\lambda)-x_1^2x_2^2\\
&=\lambda^2+\lambda(2-\norm{x}^2)+(1-\norm{x}^2).
\end{aligned}
\end{equation}
Consequently,
\begin{equation}
\lambda=\norm{x}^2-1,-1.
\end{equation}
If $g(x)=\kappa f(x)$ then the largest eigenvalue of $\nabla^2 g$ is
\begin{equation}\label{eq:eigenvalue}
\lambda(g)(x)=\max(\kappa(\norm{x}^2-1), -\kappa)e^{-\norm{x}^2/2}.
\end{equation}
Since every bump and wave is a sum of three weighted Gaussians \eqref{eq:BWdefs}, their largest eigenvalue at any point is less than the sum of the largest eigenvalues from each Gaussian. For a bump $B_1$ where $Q(t_1)=1$, each bump coefficient $\bcone_1,\bctwo_1,\bcthree_1\geq 0$, so
\begin{equation}
\begin{aligned}
v^T \nabla^2 B_1(t) v&\leq
\bcone_1(\norm{s^1_1-t}^2-1)e^{-\norm{s^1_1-t}^2/2}+\bctwo_1(\norm{s^2_1-t}^2-1)e^{-\norm{s^2_1-t}^2/2}\\
&\qquad+\bcthree_1(\norm{s^3_1-t}^2-1)e^{-\norm{s^3_1-t}^2/2}.
\end{aligned}
\end{equation}
The largest absolute eigenvalue of the bump is bounded as follows:
\begin{equation}
\begin{aligned}\label{eq:BumpEVAbsSum}
\abs{v^T\nabla^2 B_j(t)v}&\leq\bcone_j\max(\norm{s^1_j-t}^2-1,1)e^{-\norm{s^1_j-t}^2/2}\\
&\qquad+\bctwo_j\max(\norm{s^2_j-t}^2-1,1)e^{-\norm{s^2_j-t}^2/2}\\
&\qquad+\bcthree_j\max(\norm{s^3_j-t}^2-1,1)e^{-\norm{s^3_j-t}^2/2}.
\end{aligned}
\end{equation}
A similar bound holds for both waves, where we must now account for each coefficient's sign:
\begin{equation}
\begin{aligned}
\abs{v^T\nabla^2W^1_j(t)v}&\leq
\abs{\wocone_j}\max(\norm{s^1_j-t}^2-1,1)e^{-\norm{s^1_j-t}^2/2}\\
&\qquad+\abs{\woctwo_j}\max(\norm{s^2_j-t}^2-1,1)e^{-\norm{s^2_j-t}^2/2}\\
&\qquad+\abs{\wocthree_j}\max(\norm{s^3_j-t}^2-1,1)e^{-\norm{s^3_j-t}^2/2}.
\end{aligned}\label{eq:WaveEVAbsSum}
\end{equation}
By \eqref{eq:coeffmat} one of $\woctwo_j$ and $\wocthree_j$ will be zero and similarly for $\wtctwo_j$ and $\wtcthree_j$.


We discretize the upper bounds in \eqref{eq:EVnonmon} and \eqref{eq:EVmon} using the same methods described for the bump, wave and derivative envelopes. As before we partition the parameters for $\gridsep$, positional argument $t$ and spike offset $u$ into intervals $I_\gridsep$, $I_t$ and $I_u$. Recall $I_t$ and $I_u$ both are intervals in $\RR^2$ or rectangles. For the bump we use the previous choices of intervals, but since the waves' coefficient signs will affect the largest wave eigenvalues we extend the range of $I_u$ to ensure all sign combinations for the coefficients of $W^1$ and $W^2$ are considered:
\begin{equation}
I_u(j,k)=[(j-1)\gridsep/40, j\gridsep/40]\times [(k-1)\gridsep/40, k\gridsep/40],\ -19\leq j,k\leq 20, 
\end{equation}
We compute a discretized upper bound on the largest eigenvalue of bumps with parameters in particular intervals using Interval Arithmetic:
\begin{align}
\wt{\lambda(B)}_\infty(k_1,k_2,k_3,k_4,k_5)&:=
\begin{array}{c}
\sup\\
{\substack{\gridsep\in I_\gridsep(k_1)\\
t\in I_t(k_2,k_3)\\ s_j^1\in I_u(k_4,k_5)\\}}
\end{array}
\begin{array}{l}
\bcone_1(\norm{s^1_1-t}^2-1)e^{-\norm{s^1_1-t}^2/2}\\
\quad+\bctwo_1(\norm{s^2_1-t}^2-1)e^{-\norm{s^2_1-t}^2/2}\\
\quad+\bcthree_1(\norm{s^3_1-t}^2-1)e^{-\norm{s^3_1-t}^2/2}
\end{array}
\\
&\geq\sup_{\substack{\gridsep\in I_\gridsep(k_1)\\
t\in I_t(k_2,k_3)\\ s_j^1\in I_u(k_4,k_5)}}
v^T \nabla^2 B_1(t;0,s_j^1,s_j^2,s_j^3) v
\end{align}
Then a discretized envelope for $\lambda(B)_\infty(r)$ for $r\leq 10$ is obtained for all bumps with a fixed value of $\gridsep\in I_\gridsep(k_1)$ as follows:
\begin{equation}\label{eq:BumpEVMax}
\lambda(B)_\infty(r)
\leq\max_{k_2,k_3,k_4,k_5}\max(\wt{\lambda(B)}_\infty(k_1,k_2,k_3,k_4,k_5),2\cdot 10^{-9}),
\end{equation}
where $k_2$, $k_3$ range over all values where $I_t(k_2,k_3)$ contains a point $t$ with $\|t\|=r$, and $k_4$, $k_5$ take all possible values for all spike offsets.

We use the same method for a monotonic bound on the largest absolute eigenvalue of bumps and waves. A discretized bound for $\ltamaxb$ is computed using Interval Arithmetic from
\begin{align}
\wt{\enva{\lambda(B)}}(k_1,k_2,k_3,k_4,k_5)&:=
\begin{array}{c}
\sup\\
{\substack{\gridsep\in I_\gridsep(k_1)\\
t\in I_t(k_2,k_3)\\ s_j^1\in I_u(k_4,k_5)}}
\end{array}
\begin{array}{l}
\abs{\bcone_j}\max(\norm{s^1_j-t}^2-1,1)e^{-\norm{s^1_j-t}^2/2}\\
\qquad+\abs{\bctwo_j}\max(\norm{s^2_j-t}^2-1,1)e^{-\norm{s^2_j-t}^2/2}\\
\qquad+\abs{\bcthree_j}\max(\norm{s^3_j-t}^2-1,1)e^{-\norm{s^3_j-t}^2/2}
\end{array}
\\
&\geq \sup_{\substack{\gridsep\in I_\gridsep(k_1)\\
t\in I_t(k_2,k_3)\\ s_j^1\in I_u(k_4,k_5)}}
\abs{v^T \nabla^2 B_1(t;0,s_j^1,s_j^2,s_j^3) v}.
\end{align}
Then an envelope for all bumps is obtained by taking the maximum over interval choices:
\begin{equation}
\ltamaxb(r)
\leq\max_{k_2,k_3,k_4,k_5}\max(\wt{\enva{\lambda(B)}}(k_1,k_2,k_3,k_4,k_5),2\cdot 10^{-9}).
\end{equation}
Different from \eqref{eq:BumpEVMax}, here $k_2$ and $k_3$ range over values where $I_t(k_2,k_3)$ contains a point $t$ such that $\norm{t}\geq r$, monotonizing the envelope, and $k_4,\ k_5$ take all possible values. The same is done for wave envelopes $\ltmaxwo$ and $\ltmaxwt$ using the extended range of $I_u$ for $k_4$ and $k_5$.

\subsection{Proof of \Cref{lem:SchurInvEasy}}\label{sec:SchurInvProof}
Recall that we rewrite the linear system \eqref{eq:Interpolation} 
using the bump and wave parametrization as
\begin{equation}
\begin{bmatrix}
\BB & \WWo & \WWt\\
\DoB & \DoWo& \DoWt\\
\DtB & \DtWo& \DtWt
\end{bmatrix}
\begin{bmatrix}
\bcoeff\\
\wocoeff\\
\wtcoeff
\end{bmatrix}
=
\begin{bmatrix}
\signs\\
0\\
0
\end{bmatrix}
\label{eq:BlockEquationSupp}
\end{equation}
for some vectors $\bcoeff$, $\wocoeff$, $\wtcoeff$.
For clarity we rewrite this as
\begin{equation}
\label{eq:clarity}
\begin{bmatrix}
\BB&\WW\\
\BBp&\WWp
\end{bmatrix}
\begin{bmatrix}
\bcoeff\\
\Gamma
\end{bmatrix}
=
\begin{bmatrix}
\signs\\
0
\end{bmatrix},
\end{equation}
where
\begin{equation}
\BBp:=\begin{bmatrix}
\DoB\\
\DtB
\end{bmatrix},\
\WW:=\begin{bmatrix}
\WWo & \WWt
\end{bmatrix},\
\WWp:=\begin{bmatrix}
\DoWo& \DoWt\\
\DtWo& \DtWt
\end{bmatrix},\mbox{ and }
\Gamma:=\begin{bmatrix}
\wocoeff\\
\wtcoeff
\end{bmatrix}.
\end{equation}
Denote the matrix on the left side of \eqref{eq:clarity} as $\MM$.
The invertibility of $\MM$ in \eqref{eq:clarity} implies the existence of $\bcoeff$, $\wocoeff$, $\wtcoeff$ satisfying \Cref{lem:Invertibility}. The next two lemmas relate the norms of these matrices to the invertibility of $M$ along with useful bounds on the associated coefficients $\bcoeff$, $\wocoeff$, and $\wtcoeff$.
The core idea is that the diagonal elements will be exactly one by construction, and the off-diagonal elements of $\MM$ will be close to zero when the spikes are sufficiently separated.
The lemmas use the following matrices: 
\begin{align}
\SSo&:=\DoWo-\DoWt\DtWti\DtWo\\
\SSt&:=\DoB-\DoWt\DtWti\DtB\\
\SSth&:=\BB-\WWo\SSoi\SSt+\WWt\DtWti(\DtWo\SSoi\SSt-\DtB)
\end{align}

\begin{restatable}{lemma}{SchurInv}\label{lem:SchurInv}
Suppose
\begin{enumerate}
\item $\norminf{\II-\DtWt}<1$,
\item $\norminf{\II-\DoWo}+\norminf{\DoWt}\norminf{\DtWti}\norminf{\DtWo}<1$, and
\item $\norminf{\II-\BB}+\norminf{\WW}\norminf{\WWpi}\norminf{\BBp}<1$.
\end{enumerate}
Then $\SSoi$ and $\SSth^{-1}$ exist. 
\end{restatable}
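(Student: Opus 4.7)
The plan is to prove invertibility of $\SSo$ and $\SSth$ by recognizing both as Schur complements and applying a standard Neumann-series argument (if $\norminf{\II-A}<1$ then $A$ is invertible with $\norminf{A^{-1}}\leq 1/(1-\norminf{\II-A})$) to show they are each a small perturbation of the identity.

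First I would handle $\SSo$. From hypothesis~1 and the Neumann-series fact, $\DtWt$ is invertible and $\norminf{\DtWti}\leq 1/(1-\norminf{\II-\DtWt})$. Using the definition $\SSo=\DoWo-\DoWt\DtWti\DtWo$, the triangle inequality and submultiplicativity of $\norminf{\cdot}$ give
\begin{equation}
\norminf{\II-\SSo} \leq \norminf{\II-\DoWo}+\norminf{\DoWt}\,\norminf{\DtWti}\,\norminf{\DtWo},
\end{equation}
which is strictly less than $1$ by hypothesis~2. So $\SSoi$ exists and $\norminf{\SSoi}\leq 1/(1-\norminf{\II-\SSo})$.

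The key step for $\SSth$ is to identify it algebraically as the Schur complement of $\WWp$ in $\MM$, i.e.\ to verify
\begin{equation}
\SSth = \BB - \WW\,\WWpi\,\BBp.
\end{equation}
This is where the only real computation lies: since $\WWp$ is a $2\times2$ block matrix with blocks $\DoWo,\DoWt,\DtWo,\DtWt$, I would apply the standard block-inverse formula (Schur complement of $\DtWt$, which exists by Step~1 and gives $\SSoi$ as the $(1,1)$ block of $\WWpi$), multiply $\WW\WWpi\BBp$ out using $\WW=[\WWo\ \WWt]$ and $\BBp=[\DoB;\DtB]$, and collect terms until the expression matches the definition of $\SSth$ in \eqref{eq:SubMatrix}. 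This calculation also shows $\WWp$ itself is invertible (since both $\DtWt$ and its Schur complement $\SSo$ are invertible).

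Once the identity $\SSth = \BB-\WW\WWpi\BBp$ is in hand, the same triangle-inequality/submultiplicativity estimate gives
\begin{equation}
\norminf{\II-\SSth}\leq\norminf{\II-\BB}+\norminf{\WW}\,\norminf{\WWpi}\,\norminf{\BBp},
\end{equation}
which is $<1$ by hypothesis~3, so $\SSth^{-1}$ exists by Neumann series. The main obstacle, as noted, is the block-matrix bookkeeping to identify $\SSth$ as the Schur complement; the norm estimates after that are routine.
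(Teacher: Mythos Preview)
Your proposal is correct and follows essentially the same approach as the paper: invoke the Neumann-series bound to invert $\DtWt$, bound $\norminf{\II-\SSo}$ via the triangle inequality and submultiplicativity to get $\SSoi$ (and hence $\WWpi$), then identify $\SSth$ as the Schur complement $\BB-\WW\WWpi\BBp$ by the same block-inverse computation and bound $\norminf{\II-\SSth}$ by hypothesis~3. The paper carries out the block-matrix bookkeeping explicitly, but the structure and all estimates are identical to what you outline.
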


\begin{proof}
For any matrix $A\in\RR^{n\times n}$ such that $\norminf{ A} < 1$ the Neumann
series $\sum_{j=0}^\infty A^j$ converges to $(\II-A)^{-1}$.  By the
triangle inequality and the submultiplicativity of the $\infty$-norm, this gives
\begin{equation}\label{eq:neumann}
\norminf{(\II-A)^{-1}} \leq \sum_{j=0}^\infty \norminf{A}^j =
\frac{1}{1-\norminf{A}}.
\end{equation}
Setting $A=\II-\DtWt$ proves $\DtWt$ is invertible. Observe that $\SSo=\DoWo-\DoWt\DtWti\DtWo$ is the Schur complement of $\DtWt$, and by the triangle inequality and the second assumption, \begin{equation}
\norminf{I-\SSo}\leq\norminf{I-\DoWo}+\norminf{\DoWt}\norminf{\DtWti}\norminf{\DtWo}<1.
\end{equation}
Thus $\SSo$ is invertible and consequently so is $\WWp$. Then the Schur complement of $\WWp$ is
\begin{align}
\begin{split}
\BB-\WW\WWpi\BBp
&=\BB-\WWo\SSoi\DoB+\WWt\DtWti\DtWo\SSoi\DoB+\WWo\SSoi\DoWt\DtWti\DtB\\
&\qquad-\WWt\DtWti\DtB-\WWt\DtWti\DtWo\SSoi\DoWt\DtWti\DtB
\end{split}\\
\begin{split}
&=\BB-\WWo\SSoi(\DoB-\DoWt\DtWti\DtB)\\
&\qquad+\WWt\DtWti(-\DtB+\DtWo\SSoi\DoB-\DtWo\SSoi\DoWt\DtWti\DtB)
\end{split}\\
\begin{split}
&=\BB-\WWo\SSoi\SSt+\WWt\DtWti(\DtWo\SSoi\SSt-\DtB)
\end{split}\\
\begin{split}
&=\SSth
\end{split}
\end{align}
so from the last assumption
\begin{equation}
\norminf{I-\SSth}\leq\norminf{I-\BB}+\norminf{\WW}\norminf{\WWpi}\norminf{\BBp}<1.
\end{equation}
Thus $\SSth$ is invertible.
\end{proof}

\begin{restatable}{lemma}{Schur}\label{lem:Schur}
Suppose $\DtWti$, $\SSoi$ and $\SSthi$ all exist. Then
\begin{enumerate}
\item $\WWp$ is invertible,
\item $\MM$ is invertible,
\item $\norminf{\bcoeff}\leq\norminf{\SSthi},$
\item $\norminf{\wocoeff}, \norminf{\wtcoeff}\leq\norminf{\SSoi}\norminf{\SSt}\norminf{\SSthi}$,
\item $\abs{\alpha_i-\signs_i}\leq\norminf{\SSthi}\norminf{I-\SSth}$ for all $i$,
\end{enumerate}
where
\begin{equation}\label{eq:matrixformulafirst}
\norminf{\SSoi}\leq(1-\norminf{\II-\DoWo}-\norminf{\DoWt}\norminf{\DtWti}\norminf{\DtWo})^{-1},\\
\end{equation}
\begin{equation}\label{eq:matrixformula2}
\norminf{\SSt}\leq\norminf{\DoB}+\norminf{\DoWt}\norminf{\DtWti}\norminf{\DtB},\\
\end{equation}
\begin{equation}\label{eq:matrixformula3}
\norminf{\SSthi}\leq(1-\norminf{\II-\SSth})^{-1},
\end{equation}
\begin{equation}\label{eq:matrixformula4}
\begin{split}
\norminf{\II-\SSth}&\leq \norminf{\II-\BB}+\norminf{\WWo}\norminf{\SSoi}\norminf{\SSt}\\
&\qquad+\norminf{\WWt}\norminf{\DtWti}(\norminf{\DtWo}\norminf{\SSoi}\norminf{\SSt}+\norminf{\DtB}),
\end{split}
\end{equation}
and
\begin{equation}\label{eq:matrixformulalast}
\norminf{\DtWti}\leq(1-\norminf{\II-\DtWt})^{-1}.
\end{equation}
From the last result we can deduce that $\abs{\bcoeff_i}\geq 1-\norminf{\SSthi}\norminf{I-\SSth}=\bcoeffmin$.
\end{restatable}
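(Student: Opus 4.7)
The plan is to apply the Schur complement formula twice---first to show that $\WWp$ is invertible given $\DtWti$ and $\SSoi$ exist, then to show that $\MM$ is invertible given $\SSthi$ exists---and then read off the coefficient bounds from the resulting explicit block inverse combined with submultiplicativity of $\norminf{\cdot}$.

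First, $\SSo$ is by construction the Schur complement of $\DtWt$ in $\WWp$, so the existence of $\DtWti$ and $\SSoi$ yields $\WWpi$ via the standard block-inverse formula
\[
\WWpi = \begin{bmatrix} \SSoi & -\SSoi\DoWt\DtWti \\ -\DtWti\DtWo\SSoi & \DtWti + \DtWti\DtWo\SSoi\DoWt\DtWti\end{bmatrix}.
\]
A direct computation (identical to the derivation of $\SSth$ already carried out in the proof of Lemma~\ref{lem:SchurInv}) shows $\BB - \WW\WWpi\BBp = \SSth$, so $\SSth$ is the Schur complement of $\WWp$ in $\MM$. Given $\SSthi$ exists, $\MM$ is therefore invertible, establishing statements 1 and 2.

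I then perform block elimination on \eqref{eq:BlockEquationSupp}. The bottom block gives $\Gamma = -\WWpi\BBp\bcoeff$, and substituting into the top block yields $\SSth\bcoeff = \signs$, so $\bcoeff = \SSthi\signs$. Since $\norminf{\signs}=1$, this immediately delivers $\norminf{\bcoeff}\leq\norminf{\SSthi}$. Rewriting $\bcoeff - \signs = \SSthi(I-\SSth)\signs$ produces $\abs{\bcoeff_i-\signs_i}\leq\norminf{\SSthi}\norminf{I-\SSth}$, and the reverse triangle inequality then gives the lower bound $\abs{\bcoeff_i}\geq 1-\norminf{\SSthi}\norminf{I-\SSth}$ stated at the end. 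The top block of $-\WWpi\BBp\bcoeff$ simplifies through the explicit formula above to $-\SSoi\SSt\bcoeff$, yielding $\norminf{\wocoeff}\leq\norminf{\SSoi}\norminf{\SSt}\norminf{\SSthi}$. The second block of $-\WWpi\BBp$ produces a more complicated expression for $\wtcoeff$; to obtain the matching bound I repeat the block elimination with the Schur complement of $\WWp$ taken with respect to $\DoWo$ rather than $\DtWt$, producing analogues $\SSo^\ast$ and $\SSt^\ast$ of $\SSo$ and $\SSt$. By the $x$/$y$ symmetry of the bump and wave construction and the radial symmetry of the envelopes from Section~\ref{sec:BWBounds}, the bound from Lemma~\ref{lem:normbound} applied to $\norminf{(\SSo^\ast)^{-1}}\norminf{\SSt^\ast}$ coincides with the corresponding bound on $\norminf{\SSoi}\norminf{\SSt}$, delivering $\norminf{\wtcoeff}\leq\norminf{\SSoi}\norminf{\SSt}\norminf{\SSthi}$.

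Finally, the scalar bounds \eqref{eq:matrixformulafirst}--\eqref{eq:matrixformulalast} follow by iterating the Neumann series inequality $\norminf{(I-A)^{-1}}\leq(1-\norminf{A})^{-1}$ (valid whenever $\norminf{A}<1$) together with the triangle inequality and submultiplicativity of $\norminf{\cdot}$, applied in turn to $\DtWti$, $\SSt$, $\SSoi$, and $\SSth$. The main obstacle is organising these estimates so that every bound is eventually expressed purely in terms of the nine block-matrix norms controlled by Lemma~\ref{lem:normbound}: controlling $\norminf{I-\SSth}$ via \eqref{eq:matrixformula4} requires $\norminf{\SSoi}$ and $\norminf{\SSt}$, which themselves depend on $\norminf{\DtWti}$ through \eqref{eq:matrixformulafirst} and \eqref{eq:matrixformula2}, so the hypotheses must be verified to propagate through each nested Neumann series and keep every inversion well-defined.
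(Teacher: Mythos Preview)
Your approach is essentially the paper's: two nested Schur complements to get $\WWpi$ and then $\MM^{-1}$, followed by $\bcoeff=\SSthi\signs$, $\Gamma=-\WWpi\BBp\bcoeff$, and Neumann-series/triangle/submultiplicativity for \eqref{eq:matrixformulafirst}--\eqref{eq:matrixformulalast}. Items 1, 2, 3, 5 and the scalar bounds match the paper's proof line for line.

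The one place you diverge is item 4 for $\wtcoeff$. You are right that the second block of $\WWpi\BBp$ is $\DtWti\DtB-\DtWti\DtWo\SSoi\SSt$, not $\SSoi\SSt$, so the same bound does not drop out directly; the paper simply asserts the two bounds together without separate justification, so you have actually been more careful here. Your fix---redoing the elimination with the Schur complement of $\WWp$ taken with respect to $\DoWo$---is the natural one and yields $\norminf{\wtcoeff}\leq\norminf{(\SSo^\ast)^{-1}}\norminf{\SSt^\ast}\norminf{\SSthi}$. One caveat: the $x/y$ envelope symmetry shows only that the Lemma~\ref{lem:normbound} \emph{upper bounds} on $\norminf{(\SSo^\ast)^{-1}}\norminf{\SSt^\ast}$ and on $\norminf{\SSoi}\norminf{\SSt}$ coincide, not that the actual matrix norms are equal (for a generic spike/sample configuration $\SSo$ and $\SSo^\ast$ are different matrices). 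So your argument delivers the envelope-level bound used everywhere downstream (Figures~\ref{fig:NormsCoeffs} and the estimates in Section~\ref{sec:qboundproof}), which is what the paper needs, but not the literal inequality $\norminf{\wtcoeff}\leq\norminf{\SSoi}\norminf{\SSt}\norminf{\SSthi}$ as stated. This is a wrinkle in the lemma's statement that the paper's own proof also glosses over.
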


\begin{proof}
If $\DtWti$ and $\SSoi$ exist, and since $\SSo$ is the Schur complement of $\DtWt$, the block matrix inversion formula gives
\begin{equation}
\WWpi=
\begin{bmatrix}
\DoWo&\DoWt\\
\DtWo&\DtWt
\end{bmatrix}^{-1}=
\begin{bmatrix}
\SSoi&-\SSoi\DoWt\DtWti\\
-\DtWti\DtWo\SSoi&\DtWti(I+\DtWo\SSoi\DoWt\DtWti)
\end{bmatrix}.
\end{equation}

Since $\SSth$ is assumed invertible and is the Schur complement of $\WWp$ as mentioned in \Cref{lem:SchurInv}, the block matrix inversion formula gives $\MM^{-1}$:
\begin{equation}
\MM^{-1}=
\begin{bmatrix}
\SSthi&-\SSthi\WW\WWpi\\
-\WWpi\BBp\SSthi&\WWpi+\WWpi\BBp\SSthi\WW\WWpi
\end{bmatrix}.
\end{equation}
Then,
\begin{equation}
\begin{bmatrix}
\bcoeff\\
\Gamma
\end{bmatrix}
=
\MM^{-1}
\begin{bmatrix}
\signs\\
0
\end{bmatrix}
=
\begin{bmatrix}
\SSthi\signs\\
-\WWpi\BBp\SSthi\signs
\end{bmatrix}.
\end{equation}
Since
\begin{align}
\begin{split}
\WWpi\BBp&=
\begin{bmatrix}
\SSoi(\DoB-\DoWt\DtWti\DtB)\\
\DtWti\DtB-\DtWti\DtWo\SSoi\DoB+\DtWti\DtWo\SSoi\DoWt\DtWti\DtB
\end{bmatrix}\\
&=\begin{bmatrix}
\SSoi\SSt\\
\DtWti\DtB-\DtWti\DtWo\SSoi\SSt
\end{bmatrix},
\end{split}
\end{align}
and $\norminf{\signs}=1$, we have
\begin{equation}\label{eq:alphabound}
\norminf{\bcoeff}\leq\norminf{\SSthi}
\end{equation}
and
\begin{equation}\label{eq:betabound}
\norminf{\wocoeff}, \norminf{\wtcoeff}\leq\norminf{\SSoi}\norminf{\SSt}\norminf{\SSthi}.
\end{equation}
Additionally
\begin{equation}
\bcoeff-\signs=(\SSthi-I)\signs=\SSthi(I-\SSth)\signs
\end{equation}
so
\begin{equation}\label{eq:alpha1bound}
\abs{\alpha_i-\signs_i}\leq\norminf{\SSthi}\norminf{I-\SSth}.
\end{equation}
\eqref{eq:matrixformulafirst}--\eqref{eq:matrixformulalast} are easily derived by the matrix definitions, \eqref{eq:neumann}, and the triangle inequality and submultiplicativity of the $\infty$-norm.
\end{proof}

\subsection{Proof of \Cref{lem:BumpTail}}\label{sec:BumpTailProof}

First we derive some simple inequalities and introduce lemmas that will aid us. If the distance of the sample separation (grid spacing) is denoted by $\sampleprox$, then ${\abs{s_{j,(k)}^i-t_{j,(k)}}\leq\sampleprox}$ for spike $t_j=(t_{j,(1)},t_{j,(2)})$, sample point $i=1,2,3$ and coordinate $k=1,2$, so the distance between coordinates of the sample and spike is at most $\gridsep$ as well. Without loss of generality and for ease of notation we can assume $t_j$ sits at the origin. We first list formulas for the partial derivatives of the Gaussian kernel at point $t=(t_{(1)},t_{(2)})$ that will be useful in what follows:
\begin{gather}
\partial_xK(t)=-t_{(1)}\exp\left(-\frac{\|t\|^2}{2}\right)=-t_{(1)}K(t)\\
\partial_yK(t)=-t_{(2)}\exp\left(-\frac{\|t\|^2}{2}\right)=-t_{(2)}K(t)\\
\partial_{xx}K(t)=(t_{(1)}^2-1)\exp\left(-\frac{\|t\|^2}{2}\right)=(t_{(1)}^2-1)K(t)\\
\partial_{yy}K(t)=(t_{(2)}^2-1)\exp\left(-\frac{\|t\|^2}{2}\right)=(t_{(2)}^2-1)K(t)\\
\partial_{xy}K(t)=t_{(1)}t_{(2)}\exp\left(-\frac{\|t\|^2}{2}\right)=t_{(1)}t_{(2)}K(t).
\end{gather}
By applying the triangle inequality, and using the fact that $t_j$ is at the origin, we obtain the following simple bounds for points $t$
satisfying $\norm{t}\geq10$ and $\sampleprox\leq1$:
\begin{gather}
|t_{(1)}-s^i_{(1)}|\leq\|t\|+\sampleprox\leq2\|t\|,\label{eq:aux1}\\
|t_{(2)}-s^i_{(2)}|\leq\|t\|+\sampleprox\leq2\|t\|,\label{eq:aux2}\\
|t_{(1)}-s^i_{(1)}|^2-1\leq(\|t\|+\sampleprox)^2-1\leq2\|t\|^2,\label{eq:aux3}\\
|t_{(2)}-s^i_{(2)}|^2-1\leq(\|t\|+\sampleprox)^2-1\leq2\|t\|^2,\label{eq:aux4}\\
|t_{(1)}-s^i_{(1)}||t_{(2)}-s^i_{(2)}|\leq(\|t\|+\sampleprox)^2\leq2\|t\|^2,\label{eq:aux5}
\end{gather}
for $i=1,2,3$, where we have dropped the subscript $j$ on the sample points for clarity.
\begin{restatable}[Proof in \Cref{sec:BumpBoundsProof}]{lemma}{BumpBounds}\label{lem:BumpBounds} 
Let $B(t)$ denote the bump function corresponding to a spike at the origin and some configuration of the three closest samples.
If $\norm{t}\geq10$ and $\sampleprox\leq 1$,
then the absolute values of $B(t)$ and its first and second partial derivatives are all bounded by
\begin{equation}\label{eq:bumpbound}
g(t)=6\|t\|^2\exp\left(-\frac{\|t\|^2}{2}+\sqrt{2}\sampleprox\|t\|\right),
\end{equation}
for any configuration of the three closest samples. For $\norm{t}\geq 10$ and $\gridsep\leq 1$, this is less than $10^{-12}$.
\end{restatable}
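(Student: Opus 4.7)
The plan is to prove the bound by using the explicit formula for the bump function together with the coefficient representation given in \Cref{lem:BWExist}, and then show that every term in the resulting expression is dominated by $g(t)$ when $\|t\|\geq 10$ and $\gridsep\leq 1$.

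First I would reduce the problem to bounding scaled copies of the Gaussian kernel and its derivatives. By \eqref{eq:BWdefs} we have $B(t)=\bcone K(s^1-t)+\bctwo K(s^2-t)+\bcthree K(s^3-t)$, so each partial derivative of $B$ is the same linear combination of the corresponding derivatives of $K(s^i-t)$. The coefficient formula \eqref{eq:coeffmat}, together with \Cref{lem:crossprod}, gives $|\bcone|,|\bctwo|,|\bcthree|\leq e^{\|s^i-t_j\|^2/2}\cdot\max_i(D_i/D)\leq e^{\gridsep^2}$, since (with $t_j$ at the origin) the three nearest samples lie within distance $\sqrt{2}\gridsep$ of $t_j$, so $\|s^i-t_j\|^2/2\leq \gridsep^2$, and $0\leq D_i/D\leq 1$.

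Next I would bound each $K(s^i-t)$ and its first and second partial derivatives. Using the explicit formulas for $\partial_xK,\partial_yK,\partial_{xx}K,\partial_{yy}K,\partial_{xy}K$ together with \eqref{eq:aux1}--\eqref{eq:aux5}, every such quantity is dominated by $2\|t\|^2\,K(s^i-t)$ when $\|t\|\geq 10$ and $\gridsep\leq 1$. To bound $K(s^i-t)$ itself, I use the reverse triangle inequality: $\|s^i-t\|\geq \|t\|-\|s^i\|\geq \|t\|-\sqrt{2}\gridsep$, so
\begin{equation}
K(s^i-t)\;=\;\exp\!\Bigl(-\tfrac{1}{2}\|s^i-t\|^2\Bigr)\;\leq\;\exp\!\Bigl(-\tfrac{1}{2}\|t\|^2+\sqrt{2}\gridsep\|t\|-\gridsep^2\Bigr).
\end{equation}
Combining the coefficient bound with the factor $2\|t\|^2$ and the above kernel bound across the three terms, the $e^{\gridsep^2}$ from the coefficient cancels the $e^{-\gridsep^2}$ from the kernel bound, and the sum of three terms yields exactly $g(t)=6\|t\|^2\exp(-\|t\|^2/2+\sqrt{2}\gridsep\|t\|)$. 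Since $|B(t)|\leq 3 e^{\gridsep^2}\cdot e^{-\|t\|^2/2+\sqrt{2}\gridsep\|t\|-\gridsep^2}\leq g(t)$ for $\|t\|\geq 10$, the same envelope covers $B$ as well as its first and second derivatives.

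Finally I would verify the numerical claim $g(t)<10^{-12}$ for $\|t\|\geq 10$ and $\gridsep\leq 1$. The exponent $-\|t\|^2/2+\sqrt{2}\gridsep\|t\|$ is decreasing in $\|t\|$ once $\|t\|\geq \sqrt{2}\gridsep$ (certainly for $\|t\|\geq 10,\gridsep\leq 1$), and is increasing in $\gridsep$, so the worst case occurs at $\|t\|=10,\gridsep=1$, giving $g(t)\leq 600\,e^{-50+10\sqrt{2}}\approx 600\,e^{-35.86}< 10^{-12}$. The main obstacle is the geometric bookkeeping in step one, namely confirming that the three nearest grid samples to the spike lie within distance $\sqrt{2}\gridsep$ (so that the coefficient normalizer is well controlled) and that the reverse triangle inequality is tight enough to absorb the $e^{\gridsep^2}$ from the coefficient bound; once this is observed the rest of the argument is a direct calculation.
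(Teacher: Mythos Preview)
Your proposal is correct and follows essentially the same approach as the paper: both use $0\leq D_i/D\leq 1$ from \Cref{lem:crossprod}, the bound $\|s^i\|\leq\sqrt{2}\gridsep$, and the reverse triangle inequality to cancel the $e^{\|s^i\|^2/2}$ normalizer against the kernel decay, then multiply by the polynomial factors from \eqref{eq:aux1}--\eqref{eq:aux5}. One small imprecision: in your final step you argue the \emph{exponent} is decreasing in $\|t\|$, but $g(t)$ also carries the $6\|t\|^2$ prefactor; you should check (as the paper does when proving \Cref{lem:BumpTail}) that the full function $x^2 e^{-x^2/2+\sqrt{2}\gridsep x}$ is decreasing for $x\geq 10$, which follows since its logarithmic derivative $2/x - x + \sqrt{2}\gridsep$ is negative there.
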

\begin{proof}
By \eqref{eq:coeffmat}
$$\bcone=\frac{D_1}{D}\exp\left(\frac{\|s^1\|^2}{2}\right)$$
so using \Cref{lem:crossprod} and since $\norm{s^i}\leq\sqrt{2}\gridsep$ for all three samples,
\begin{align}
|\bcone K(t-s^1)|&=\left|\frac{D_1}{D}\right|\exp\left(\|s^1\|^2/2\right)\exp\left(-\|t-s^1\|^2/2\right)\\
&\leq \exp\left(\frac{\|s^1\|^2-\|s^1\|^2-\|t\|^2+2\|s^1\|\|t\|}{2}\right)\\
&\leq\exp\left(-\frac{\|t\|^2}{2}+\sqrt{2}\sampleprox\|t\|\right).\label{eq:coeffkernelbd}
\end{align}
Also by \eqref{eq:aux1}
\begin{align}
|\bcone \partial_xK(t-s^1)|&=|(t_{(1)}-s^1_{(1)})\bcone K(t-s^1)|\\
&\leq2\|t\|\exp\left(-\frac{\|t\|^2}{2}+\sqrt{2}\sampleprox\|t\|\right).
\end{align}
The same holds for $\abs{\bcone \partial_yK(t-s^1)}$. By \eqref{eq:aux3},
\begin{align}
\abs{\bcone \partial_{xx}K(t-s^1)}&=\abs{((t_{(1)}-s^1_{(1)})^2-1)\bcone K(t-s^1)}\\
&\leq2\norm{t}^2\exp\left(-\frac{\norm{t}^2}{2}+\sqrt{2}\sampleprox\norm{t}\right).
\end{align}
The same holds for $\abs{\bcone\partial_{yy}K(t-s^1)}$ and $\abs{\bcone\partial_{xy}K(t-s^1)}$. The same bounds also hold for the second and third samples, with $\bcone$ replaced with $\bctwo$ and $\bcthree$, respectively.
Thus for $\norm{t}\geq10$ and any configuration of the three nearest samples,
\begin{align}
\abs{B(t)}&\leq\abs{\bcone K(t-s^1)}+\abs{\bctwo K(t-s^2)}+\abs{\bcthree K(t-s^3)}\\
&\leq3\exp\left(-\frac{\norm{t}^2}{2}+\sqrt{2}\sampleprox\norm{t}\right)\\
&\leq6\|t\|^2\exp\left(-\frac{\|t\|^2}{2}+\sqrt{2}\sampleprox\|t\|\right),\\
\abs{\partial_{x}B(t)}&\leq|\bcone\partial_{x}K(t-s^1)|+|\bctwo\partial_{x}K(t-s^2)|+|\bcthree\partial_{x}K(t-s^3)|\\
&\leq6\|t\|\exp\left(-\frac{\|t\|^2}{2}+\sqrt{2}\sampleprox\|t\|\right)\\
&\leq6\|t\|^2\exp\left(-\frac{\|t\|^2}{2}+\sqrt{2}\sampleprox\|t\|\right),\\
\abs{\partial_{xx}B(t)}&\leq|\bcone\partial_{xx}K(t-s^1)|+|\bctwo\partial_{xx}K(t-s^2)|+|\bcthree\partial_{xx}K(t-s^3)|\\
&\leq6\|t\|^2\exp\left(-\frac{\|t\|^2}{2}+\sqrt{2}\sampleprox\|t\|\right).
\end{align}
Similar reasoning shows the same bound holds for $\abs{\partial_yB(t)}$, $\abs{\partial_{yy}B(t)}$ and $\abs{\partial_{xy}B(t)}$.
\end{proof}

\begin{restatable}{lemma}{WaveBounds}\label{lem:WaveBounds} 
Let $W^i(t)$, for $i=1,2$, denote the $i$th wave function corresponding to a spike at the origin and some configuration of the three closest samples.
If $\norm{t}\geq10$ and $\sampleprox\leq 1$,
then the absolute values of $W^i(t)$ and its first and second partial derivatives are all bounded by \begin{equation}\label{eq:wavebound}
g(t)=\frac{6\|t\|^2}{\gridsep}\exp\left(-\frac{\|t\|^2}{2}+\sqrt{2}\sampleprox\|t\|\right),
\end{equation}
for any configuration of the three closest samples.
For $10^{-2}\leq\gridsep\leq 1$, this is less than $2\cdot 10^{-9}$.
\end{restatable}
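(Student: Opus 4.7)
The plan is to follow the proof of \Cref{lem:BumpBounds} essentially line-by-line, modifying a single estimate to account for the extra $1/\gridsep$ factor that distinguishes wave coefficients from bump coefficients. I expect this to be almost entirely routine adaptation rather than a substantive new argument.

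First I would exploit the explicit coefficient formula \eqref{eq:coeffmat}: every wave coefficient has the form $(s^a_{(k)}-s^b_{(k)}) e^{\|s^i_j-t_j\|^2/2}/D$, in contrast to the bump coefficient $D_i e^{\|s^i_j-t_j\|^2/2}/D$ where $|D_i/D|\leq 1$. Because the three nearest samples are vertices of a single grid cell of side $\gridsep$, each pairwise coordinate difference satisfies $|s^a_{(k)}-s^b_{(k)}|\leq \gridsep$, and combining this with the identity $|D|=\gridsep^2$ from \Cref{lem:crossprod} gives
\begin{equation}
\max\brac{\abs{\wocone},\abs{\woctwo},\abs{\wocthree},\abs{\wtcone},\abs{\wtctwo},\abs{\wtcthree}} \;\leq\; \frac{1}{\gridsep}\max_{i}e^{\norm{s^i-t_j}^2/2}.
\end{equation}
This is the sole structural difference between the bump and wave analyses.

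The core computation \eqref{eq:coeffkernelbd} then transports verbatim, with a single insertion of the $1/\gridsep$: expanding $\|t-s^i\|^2=\|t\|^2-2\langle t,s^i\rangle+\|s^i\|^2$, applying Cauchy--Schwarz, and using $\|s^i\|\leq\sqrt{2}\gridsep$ produces, for every wave coefficient $\gamma$ and every sample $s^i$,
\begin{equation}
\abs{\gamma\, K(t-s^i)} \;\leq\; \frac{1}{\gridsep}\exp\brac{-\frac{\norm{t}^2}{2}+\sqrt{2}\gridsep\norm{t}}.
\end{equation}
The derivative formulas for the Gaussian kernel, paired with the auxiliary inequalities \eqref{eq:aux1}--\eqref{eq:aux5} (which give $|t_{(k)}-s^i_{(k)}|\leq 2\|t\|$ and $|t_{(k)}-s^i_{(k)}|^2-1\leq 2\|t\|^2$), then lift this to the same envelope scaled by $\|t\|$ or $\|t\|^2$ for the first and second partial derivatives of $\gamma\, K(t-s^i)$. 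Summing the three sample contributions in \eqref{eq:BWdefs} converts the leading constant from $2$ to $6$, matching the target envelope $g(t)=(6\|t\|^2/\gridsep)\exp(-\|t\|^2/2+\sqrt{2}\gridsep\|t\|)$.

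The only genuinely new step relative to \Cref{lem:BumpBounds} is verifying the numerical tail estimate $g(t)<2\cdot 10^{-9}$ on the region $\|t\|\geq 10$, $\gridsep\in[10^{-2},1]$, which I anticipate to be the main bookkeeping obstacle (but not a conceptual one). For each fixed $\gridsep$ in the range, the exponent $-\|t\|^2/2+\sqrt{2}\gridsep\|t\|$ is strictly decreasing in $\|t\|$ once $\|t\|>\sqrt{2}\gridsep$, which certainly holds at $\|t\|\geq 10$, so the worst case is $\|t\|=10$ and the bound reduces to $600\gridsep^{-1}\exp(-50+10\sqrt{2}\gridsep)$. Checking the two endpoints $\gridsep=1$ (where $g\approx 600 e^{-50+10\sqrt{2}}\approx 10^{-13}$) and $\gridsep=10^{-2}$ (where the $1/\gridsep=100$ prefactor is dwarfed by the $e^{-50}$ factor, giving $g\approx 10^{-17}$) suffices; both lie comfortably below $2\cdot 10^{-9}$, so the tail estimate follows by monotonicity arguments on $[10^{-2},1]$.
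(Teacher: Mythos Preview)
Your proposal is correct and follows essentially the same approach as the paper: both arguments rest on the single observation that wave coefficients carry an extra factor of $1/\gridsep$ compared to bump coefficients (via $|s^a_{(k)}-s^b_{(k)}|\leq\gridsep$ and $|D|=\gridsep^2$), after which the bump proof transports verbatim. One small wrinkle: in your tail-estimate step you argue that the \emph{exponent} $-\|t\|^2/2+\sqrt{2}\gridsep\|t\|$ is decreasing for $\|t\|>\sqrt{2}\gridsep$ and conclude the worst case is $\|t\|=10$, but this ignores the increasing $\|t\|^2$ prefactor; you need the full function $x^2\exp(-x^2/2+\sqrt{2}\gridsep x)$ to be decreasing for $x\geq 10$, which the paper verifies separately (in the proof of \Cref{lem:BumpTail}) by differentiating and checking $-x^3+\sqrt{2}\gridsep x^2+2x<0$ for $x>\sqrt{2}\gridsep+2$.
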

\begin{proof}
By \eqref{eq:coeffmat}
\begin{equation}
\wocone=\frac{s^2_{(2)}-s^3_{(2)}}{D}\exp\left(\frac{\|s^1\|^2}{2}\right).
\end{equation}
Since $|s^2_{(2)}-s^3_{(2)}|\leq\sampleprox$ and $|D|=\sampleprox^2$ by \Cref{lem:crossprod},
\begin{align}
|\wocone K(t-s^1)|&=\left|\frac{s^2_{(2)}-s^3_{(2)}}{D}\right|\exp\left(\|s^1\|^2/2\right)\exp\left(-\|t-s^1\|^2/2\right)\\
&\leq\frac{1}{\sampleprox}\exp\left(-\frac{\|t\|^2}{2}+\sqrt{2}\sampleprox\|t\|\right).\label{eq:wcoeffkernelbd}
\end{align}
Also by \eqref{eq:aux1}
\begin{align}
\abs{\wocone\partial_x K(t-s^1)}&=\abs{(t_{(1)}-s^1_{(1)})\wocone K(t-s^1)}\\
&\leq\frac{2\|t\|}{\sampleprox}\exp\left(-\frac{\|t\|^2}{2}+\sqrt{2}\sampleprox\|t\|\right),
\end{align}
and the same bound holds for $\abs{\wocone\partial_y K(t-s^1)}$. By \eqref{eq:aux3}
\begin{align}
\abs{\wocone\partial_{xx}K(t-s^1)}&=\abs{((t_{(1)}-s^1_{(1)})^2-1)\wocone K(t-s^1)}\\
&\leq\frac{2\|t\|^2}{\sampleprox}\exp\left(-\frac{\|t\|^2}{2}+\sqrt{2}\sampleprox\|t\|\right).
\end{align}
The same holds for $\abs{\wocone\partial_{yy}K(t-s^1)}$ and $\abs{\wocone\partial_{xy}K(t-s^1)}$. The same bounds also hold for the second and third samples, with $\bcone$ replaced with $\bctwo$ and $\bcthree$, respectively. Thus for $\norm{t}\geq10$, $i=1,2$, and any configuration of the three closest samples,
\begin{align}
|W^i(t)|&\leq|\wocone K(t-s^1)|+|\woctwo K(t-s^2)|+|\wocthree K(t-s^3)|\\
&\leq\frac{3}{\sampleprox}\exp\left(-\frac{\|t\|^2}{2}+\sqrt{2}\sampleprox\|t\|\right)\\
&\leq\frac{6\|t\|^2}{\sampleprox}\exp\left(-\frac{\|t\|^2}{2}+\sqrt{2}\sampleprox\|t\|\right),\\
|\partial_{x}W^i(t)|&\leq|\wocone\partial_{x}K(t-s^1)|+|\woctwo\partial_{x}K(t-s^2)|+|\wocthree\partial_{x}K(t-s^3)|\\
&\leq\frac{6\|t\|}{\sampleprox}\exp\left(-\frac{\|t\|^2}{2}+\sqrt{2}\sampleprox\|t\|\right)\\
&\leq\frac{6\|t\|^2}{\sampleprox}\exp\left(-\frac{\|t\|^2}{2}+\sqrt{2}\sampleprox\|t\|\right),\\
|\partial_{xx}W^i(t)|&\leq|\wocone\partial_{xx}K(t-s^1)|+|\woctwo\partial_{xx}K(t-s^2)|+|\wocthree\partial_{xx}K(t-s^3)|\\
&\leq\frac{6\|t\|^2}{\sampleprox}\exp\left(-\frac{\|t\|^2}{2}+\sqrt{2}\sampleprox\|t\|\right).
\end{align}
Similar reasoning shows the same bound holds for $\abs{\partial_y W^i(t)}$, $\abs{\partial_{yy}W^i(t)}$ and $\abs{\partial_{xy}W^i(t)}$.
\end{proof}

\BumpTail*
\begin{figure}[t]
\centering
\includegraphics{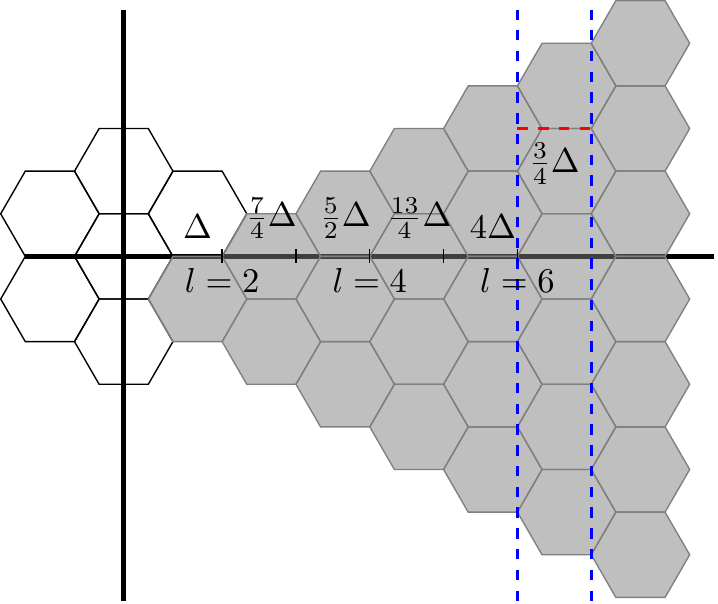}
\caption[Bounds on Distant Spikes]{Hexagonal grid rotated to show distance bounds for spike layers. We note that each layer of cells sits $3\spikesep/4$ further from the origin than the previous layer. }\label{fig:cellsep}
\end{figure}

\begin{proof}
If $h(x)=x^2\exp(-x^2/2+\sqrt{2}\sampleprox x)$, and $x>\sqrt{2}\sampleprox+2$,
\begin{align}
h^\prime(x)&=2x\exp\left(-\frac{x^2}{2}+\sqrt{2}\sampleprox x\right)+x^2(-x+\sqrt{2}\sampleprox)\exp\left(-\frac{x^2}{2}+\sqrt{2}\sampleprox x\right)\\
&=(-x^3+\sqrt{2}\sampleprox x^2+2x)\exp\left(-\frac{x^2}{2}+\sqrt{2}\sampleprox x\right)\\
&\leq(-x^3+(\sqrt{2}\sampleprox+2)x^2)\exp\left(-\frac{x^2}{2}+\sqrt{2}\sampleprox x\right)\\
&<0.
\end{align}
Thus for $\norm{t}\geq 10$ and $\sampleprox\leq 1$, the bound \eqref{eq:bumpbound} is strictly decreasing \bdb{with $\norm{t}\rightarrow\infty$}.
By rotating the plane we note that each layer of $U_i$'s is positioned $3\spikesep/4$ further from the origin than the previous layer as \Cref{fig:cellsep} indicates. Spikes $t_j$ in cells in layer $l=2$ have $\norm{t_{j}}\geq\spikesep$ so $\norm{t_{i}}\geq (3l-2)\spikesep/4$ for spikes $t_i$ in layer $l$.
Thus if $\spikesep\geq2$, spikes in the $l$\textsuperscript{th} layer ($l\geq2$) satisfy
$\norm{t_{j}}\ \geq\ (3l-2)\spikesep/4\ \geq\ 3l/2-1$.
The distance between a spike $t_j$ in layer $l$ and a point $z$ with $\norm{z}\leq\spikesep$ is given by
\begin{equation}\label{eq:LayerIneq}
\norm{t_{j}-z}\geq(3l-2)\spikesep/4-\spikesep\geq 3l/2-3.
\end{equation}
For layers $l\geq 9$, $\norm{t_{j}-z}\geq10$.  
Below, to obtain an upper bound, we will assume that each layer $l$ has $6l$ spikes, the maximum possible number (visible in \Cref{fig:cellsep}).  Let $\UU_l$ denote the union of all hexagonal cells in the $l$th layer.
\begin{align}
\sum_{t_j\in \UU_l}\abs{f(t_{j}-z)}&\leq\sum_{t_j\in \UU_l}6\|t_{j}-z\|^2\exp\left(-\frac{\|t_{j}-z\|^2}{2}+\sqrt{2}\sampleprox\norm{t_{j}-z}\right)\\
&\leq 6l\cdot 6(3l/2-3)^2\exp\left(-\frac{(3l/2-3)^2}{2}+\sqrt{2}\sampleprox(3l/2-3)\right)\\
&\leq 36(3l/2-3)^3\exp\left(-\frac{(3l/2-3)^2}{2}+\sqrt{2}\sampleprox(3l/2-3)\right)\label{eq:bumpbound2}
\end{align}
where the first inequality follows from \Cref{lem:BumpBounds}, \bdb{the second from the fact that $h$ is decreasing and \eqref{eq:LayerIneq}, and the third from $l\geq 9$}. Let $p(a):=36a^3\exp(-a^2/2+\sqrt{2}\sampleprox a)$. If $a\geq 10$ and $\sampleprox\leq1$
\begin{align}
\frac{p(a+1)}{p(a)}&=\frac{36(a+1)^3\exp(-(a+1)^2/2+\sqrt{2}\sampleprox(a+1))}{36a^3\exp(-a^2/2+\sqrt{2}\sampleprox a)}\\
&=\frac{(a+1)^3}{a^3}\exp(-((a+1)^2-a^2)/2+\sqrt{2}\sampleprox(a+1)-\sqrt{2}\sampleprox a)\\
&=\left(1+\frac{3}{a}+\frac{3}{a^2}+\frac{1}{a^3}\right)\exp(-a-1/2+\sqrt{2}\sampleprox)\\
&\leq\frac{2}{e^{a-\sqrt{2}\sampleprox+1/2}}\leq\frac{2}{e^{9}} < \frac{1}{2}.
\end{align}
With this
\begin{align}
\sum_{l=9}^\infty p(3l/2-3)&\leq\sum_{m=10}^\infty p(m)= p(10)\sum_{m=10}^\infty \frac{p(m)}{p(10)}\\
&< p(10)\sum_{m=0}^\infty 2^{-i}=2p(10)\\
&<2\times10^{-11}.
\end{align}
Thus the sum of bumps or derivatives at spikes in layers $9$ and above can be bounded by
\begin{align}
\sum_{l=9}^\infty\sum_{t_j\in\UU_l}\abs{f(t_{j})}&\leq\sum_{l=9}^\infty p(3l/2-3) &&\text{by \eqref{eq:bumpbound2}} \\
&<\ 2\times10^{-11}.
\end{align}
This also holds using $\abs{f(t_{l,j}-z)}$ for points $\norm{z}\leq\spikesep$.

The same argument from \Cref{lem:BumpTail} together with \Cref{lem:WaveBounds} can be used to show that the sum of the waves or derivatives of such at spikes in distant $U_i$ ($l\geq9$) is bounded by
\begin{equation}
\sum_{l=9}^\infty\sum_{j=1}^{6l}|f(t_{l,j})|\ \leq\ \frac{1}{\sampleprox}\sum_{l=9}^\infty p(3l/2-3)\ <\ \frac{2}{\sampleprox}\times10^{-11}.
\end{equation}
If $\gridsep>10^{-2}$, this is less than $2\times10^{-9}$.
\end{proof}

\begin{lemma}\label{lem:EVTail}
If $\norm{t}\geq10$ and $\sampleprox\leq1$,
then $\abs{v^T\nabla^2 B_1(t) v}$ is bounded by 
\begin{equation}\label{eq:BumpEVbound}
f(t)=6\|t\|^2\exp\left(-\frac{\|t\|^2}{2}+\sqrt{2}\sampleprox\|t\|\right)
\end{equation}
and $\abs{v^T\nabla^2 W^1_1(t) v}$ and $\abs{v^T\nabla^2 W^2_j(t)v}$ by 
\begin{equation}
f(t)=\frac{6\|t\|^2}{\gridsep}\exp\left(-\frac{\|t\|^2}{2}+\sqrt{2}\sampleprox\|t\|\right).
\end{equation}
Consequently, if $\UU_{\geq 9}$ denotes the union of all hexagonal cells in layers nine and higher and $z$ is any point with $\norm{z}_2\leq\spikesep(T)$, then
\begin{equation}
\sum_{t_k\in T\cap\,\UU_{\geq9}}\abs{f(t_k-z)}< 2\times10^{-11}=:\epsbump\mbox{\quad and\quad}
\sum_{t_k\in T\cap\,\UU_{\geq9}}\abs{g(t_k-z)}< 2\times 10^{-9}=:\epswave,
\end{equation}
where $f(t)=\abs{v^T\nabla^2 B_1(t) v}$
and $g(t)=\abs{v^T\nabla^2W^i_j(t) v}$ for $i=1,2$ and $\norm{v}=1$.
\end{lemma}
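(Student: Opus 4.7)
The plan is to recycle the proofs of \Cref{lem:BumpBounds,lem:WaveBounds,lem:BumpTail} almost verbatim, with the pointwise bounds on the second partial derivatives replaced by the pointwise bound on the quadratic form $v^T \nabla^2 B_1(t) v$ established in \Cref{sec:EVEnvelopes}. Since the eigenvalues of the Hessian of the standard Gaussian $K$ are $(\|x\|^2-1)K(x)$ and $-K(x)$, writing each bump or wave as a linear combination of three shifted Gaussians via \eqref{eq:BWdefs} yields inequalities of exactly the same shape as the ones already used in \Cref{lem:BumpBounds,lem:WaveBounds}, so the subsequent geometric argument goes through with identical constants.

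For the bump bound, I would start from
\begin{equation*}
v^T \nabla^2 B_1(t) v = \sum_{i=1}^{3} \bcone_i\, v^T \nabla^2 K(t-s_1^i) v,
\end{equation*}
apply the pointwise estimate $|v^T \nabla^2 K(x) v| \leq \max(\|x\|^2-1,\,1)\,K(x)$ (valid for any unit $v$), and observe that under $\|t\| \geq 10$ and $\gridsep \leq 1$ each $\|t-s_1^i\| \geq 10-\sqrt{2} > \sqrt{2}$, so the maximum is attained by $\|t-s_1^i\|^2-1$. Combining \Cref{lem:crossprod}'s bound $|\bcone_i| \leq \exp(\|s_1^i\|^2/2)$ with the auxiliary estimate \eqref{eq:aux3} giving $\|t-s_1^i\|^2-1 \leq 2\|t\|^2$ and the kernel decay bound \eqref{eq:coeffkernelbd}, each summand is at most $2\|t\|^2 \exp(-\|t\|^2/2 + \sqrt{2}\gridsep\|t\|)$, and summing over the three samples yields the claimed $f(t) = 6\|t\|^2\exp(-\|t\|^2/2 + \sqrt{2}\gridsep\|t\|)$. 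The wave case is identical except the coefficient bound from \Cref{lem:BWExist} and \Cref{lem:crossprod} gives $|\wocone_i|,|\wtcone_i| \leq \frac{1}{\gridsep}\exp(\|s_1^i\|^2/2)$ (since the numerators are bounded by $\gridsep$ and $|D|=\gridsep^2$), producing the extra $1/\gridsep$ factor.

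For the tail summation, the layering argument from the proof of \Cref{lem:BumpTail} applies without change, because its essential inputs (monotone decrease of $h(x)=x^2\exp(-x^2/2+\sqrt{2}\gridsep x)$ for $x \geq \sqrt{2}\gridsep+2$, the distance bound $\|t_j - z\| \geq 3l/2 - 3$ for $t_j$ in layer $l \geq 2$ and $\|z\| \leq \spikesep(T)$, and the geometric decay $p(m+1)/p(m) < 1/2$ for $p(m) := 36 m^3 \exp(-m^2/2 + \sqrt{2}\gridsep m)$ at $m \geq 10$) depend only on the envelope shape, not on whether the envelope came from a partial derivative or from an eigenvalue. Inserting the bump Hessian envelope gives $\sum_{t_k \in T \cap \UU_{\geq 9}} |f(t_k - z)| \leq \sum_{l \geq 9} p(3l/2-3) < 2p(10) < \epsbump$, and the wave analog picks up the $1/\gridsep < 10^{2}$ factor to give $\epswave$.

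The main obstacle is essentially bookkeeping: verifying that under $\|t\| \geq 10$ the $\max(\|x\|^2-1,1)$ branch is always resolved in favor of the first argument so that the Hessian bound retains the clean $2\|t\|^2$ prefactor matching the second-partial bounds from \Cref{lem:BumpBounds,lem:WaveBounds}. Once this is observed, no new analysis is required; the lemma follows by substituting the new envelope into the already-written layered decay estimate.
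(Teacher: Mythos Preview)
Your proposal is correct and follows essentially the same approach as the paper: bound each of the three Gaussian summands in the Hessian quadratic form via the eigenvalue estimate $|v^T\nabla^2 K(x)v|\leq\max(\|x\|^2-1,1)K(x)$, combine with the coefficient bounds from \Cref{lem:BWExist,lem:crossprod} and the auxiliary inequalities \eqref{eq:aux1}--\eqref{eq:aux5} to recover the same envelopes as in \Cref{lem:BumpBounds,lem:WaveBounds}, and then invoke the layered decay argument of \Cref{lem:BumpTail} unchanged. Your explicit observation that the $\max$ is resolved by $\|t-s_1^i\|^2-1$ is a clean touch but not strictly needed, since $1\leq 2\|t\|^2$ holds trivially for $\|t\|\geq 10$.
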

\begin{proof}
Consider the first term in \eqref{eq:BumpEVAbsSum}, $|\bcone_j|\max(\|s^1_j-t\|^2-1,1)e^{-\norm{s^1_j-t}^2/2}$.
From \eqref{eq:coeffkernelbd} we get that
\begin{equation}
\abs{\bcone_j} e^{-\norm{s^1_j-t}^2/2}\leq\exp\left(-\frac{\|t\|^2}{2}+\sqrt{2}\sampleprox\|t\|\right),
\end{equation}
and \eqref{eq:aux5} gives
\begin{equation}
\max(\abs{\norm{s^1_j-t}^2-1},1)\leq 2\norm{t}^2.
\end{equation}
The same holds for the second and third summands and so \eqref{eq:BumpEVbound} holds.
We can separate the three terms of $W^1_j$ and $W^2_j$ in the same way to get a bound for $\enva{\lambda(W^i)}(r)$ similarly using \eqref{eq:wcoeffkernelbd} and \eqref{eq:aux5}.

From there, \Cref{lem:BumpTail} can be extended so that $f$ can be the largest absolute eigenvalues of either the bump or wave functions respectively. Thus the contributions to the value of $v^T\nabla^2Q(t) v$ (where $\norm{v}=1$) from spikes beyond layer $l=9$ are less than $\norminf{\bcoeff}\epsbump$ for all bumps and $(\norminf{\wocoeff}+\norminf{\wtcoeff})\epswave$ for all waves.
\end{proof}

\section{Proof of \Cref{lem:Qtri}}\label{sec:QtriProof}

\subsection{\Cref{eq:QBound} and \Cref{eq:QLowerBound}}
Since the envelopes in \eqref{eq:Envelopes} are monotonically decreasing, for $t\in S:=[a,b]$,
\begin{align}
\abs{Q(t)}&=\abs{\sum_{t_j\in T}\bcoeff_j B_j(t)+\wocoeff_j W^1_j(t)+\wtcoeff_j W^2_j(t)}\\
&\leq\sum_{t_j\in T}\norminf{\bcoeff}\enva{B}(\norm{t-t_j})+\norminf{\wocoeff}\enva{W^1}(\norm{t-t_j})+\norminf{\wtcoeff}\enva{W^2}(\norm{t-t_j})\\
\begin{split}\label{eq:Qenvbound}
&\leq\norminf{\bcoeff}\enva{B}(d(t_1,S))+\norminf{\wocoeff}\enva{W^1}(d(t_1,S))+\norminf{\wtcoeff}\enva{W^2}(d(t_1,S))\\
&\qquad+\sum_{U\in \CC_{\leq 8}}\norminf{\bcoeff}\enva{B}(d(S,U))+\norminf{\wocoeff}\enva{W^1}(d(S,U))+\norminf{\wtcoeff}\enva{W^2}(d(S,U))\\
&\qquad+\norminf{\bcoeff}\epsbump+\norminf{\wocoeff}\epswave+\norminf{\wtcoeff}\epswave.
\end{split}
\end{align}
By \Cref{lem:BumpTail} and that $\norminf{\bcoeff}\leq 2$, $\norminf{\wocoeff}$ and $\norminf{\wtcoeff}\leq 1$ (as plotted in \Cref{fig:NormsCoeffs}) we have
\begin{equation}
\norminf{\bcoeff}\epsbump+\norminf{\wocoeff}\epswave+\norminf{\wtcoeff}\epswave < 10^{-9}.
\end{equation}
Combining this with $d(t_1,S)=a$ and $d_U=d(S,U)$ gives \eqref{eq:QBound}.

Recalling that $\bcoeff_1 B_1(t)\geq 0$ from our assumption that $Q(t_1)=1$, we get the lower bound \eqref{eq:QLowerBound} easily from \eqref{eq:QBound} by removing the first term.
This lower bound, denoted as $Q_{\LB}$, is used to show that $Q(t)>-1$ up to $u_2$.

\subsection{\Cref{eq:RawGradBound}}\label{sec:DQ}

Since $t_1$ is the origin, $\nabla Q\cdot t/\norm{t}$ is the radially outward directional derivative along the direction of $t$.
For convenience denote $\hat{t}=t/\norm{t}$.
Note
\begin{equation}
\abs{\nabla B_j(t)\cdot\hat{t}}\leq\norm{\nabla B_j(t)}\leq(\enva{\partial_x B}(\norm{t})^2+\enva{\partial_y B}(\norm{t})^2)^{1/2}=:\norm{\nabla B}_\downarrow(\norm{t}),
\end{equation}
which is monotone decreasing since both envelopes $\enva{\partial_x B}$ and $\enva{\partial_y B}$ are, and analogously for the directional derivatives of the two waves.
Then
\begin{align}
\nabla Q(t)\cdot \hat{t}&=\sum_{t_j\in T}\nabla [\bcoeff_j B_j(t)+\wocoeff_j W^1_j(t)+\wtcoeff_j W^2_j(t)]\cdot\hat{t}\\
\begin{split}
&\leq\bcoeff_1\nabla B_1(t)\cdot\hat{t}+\norminf{\wocoeff}\norm{\nabla W^1}_\downarrow(\norm{t-t_1})+\norminf{\wtcoeff}\norm{\nabla W^2}_\downarrow(\norm{t-t_1})\\
&\quad+\sum_{t_j\in T\setminus\{t_1\}}\norminf{\bcoeff}\norm{\nabla B}_\downarrow(\norm{t-t_j})+\norminf{\wocoeff}\norm{\nabla W^1}_\downarrow(\norm{t-t_j})+\norminf{\wtcoeff}\norm{\nabla W^2}_\downarrow(\norm{t-t_j}).
\end{split}\label{eq:Dbound1}
\end{align}
We use \Cref{lem:BumpTail} to bound contributions from spikes outside $U\in \CC_{\leq 8}$. Note
\begin{equation}
(\enva{\partial_x B}(s)^2+\enva{\partial_y B}(s)^2)^{1/2}\leq
\enva{\partial_x B}(s)+\enva{\partial_y B}(s)
\end{equation}
by squaring both sides.
Thus
\begin{align}
\begin{split}
\sum_{\substack{t_j\in U\\U\in\CC_{\geq 9}}}&\norminf{\bcoeff}\norm{\nabla B}_\downarrow(\norm{t-t_j})+\norminf{\wocoeff}\norm{\nabla W^1}_\downarrow(\norm{t-t_j})+\norminf{\wtcoeff}\norm{\nabla W^2}_\downarrow(\norm{t-t_j})\\
&\leq\sum_{\substack{t_j\in U\\U\in\CC_{\geq 9}}}
\norminf{\bcoeff}\enva{\partial_x B}(\norm{t-t_j})+\norminf{\bcoeff}\enva{\partial_y B}(\norm{t-t_j})\\
&\qquad+\norminf{\wocoeff}\enva{\partial_x W^1}(\norm{t-t_j})+\norminf{\wocoeff}\enva{\partial_y W^1}(\norm{t-t_j})\\
&\qquad+\norminf{\wtcoeff}\enva{\partial_x W^2}(\norm{t-t_j})+\norminf{\wtcoeff}\enva{\partial_y W^2}(\norm{t-t_j})
\end{split}\\
&\leq 2\norminf{\bcoeff}\epsbump+2\norminf{\wocoeff}\epswave+2\norminf{\wtcoeff}\epswave< 10^{-9}.
\end{align}
As with \eqref{eq:QBound}, since $a\leq\norm{t-t_1}$ for all $t\in[a,b]$ and $d_U\leq\norm{t-t_j}$ for $t_j\in U$, from \eqref{eq:Dbound1} we get
\begin{equation}
\begin{aligned}
\nabla Q(t)\cdot \hat{t}
&\leq\bcoeff_1\nabla B_1(t)\cdot\hat{t}+\norminf{\wocoeff}\norm{\nabla W^1}_\downarrow(a)+\norminf{\wtcoeff}\norm{\nabla W^2}_\downarrow(a)\\
&\quad+\sum_{U\in\CC_{\leq 8}}\norminf{\bcoeff}\norm{\nabla B}_\downarrow(d_U)+\norminf{\wocoeff}\norm{\nabla W^1}_\downarrow(d_U)+\norminf{\wtcoeff}\norm{\nabla W^2}_\downarrow(d_U)
+\epsilon.
\end{aligned}\label{eq:Dbound2}
\end{equation}
Lastly, we can bound $\bcoeff_1\nabla B_1(t)\cdot\hat{t}$ using \eqref{eq:RawEnvelopes}.
Note $\bcoeff_1\geq 0$ since $Q(t_1)=1$ and recall $\bcoeffmin$ represents the smallest magnitude that $\bcoeff_1$ can be. 
Thus when $\nabla B_1(t)\cdot\hat{t}\leq 0$,
\begin{equation}
\bcoeff_1\nabla B_1(t)\cdot\hat{t}\leq \bcoeffmin D(B)_\infty(\norm{t}),
\end{equation}
and when $\nabla B_1(t)\cdot\hat{t}\geq 0$
\begin{equation}
\bcoeff_1\nabla B_1(t)\cdot\hat{t}\leq \norminf{\bcoeff} D(B)_\infty(\norm{t}).
\end{equation}
Denote $\omega:=\sup_{t\in[a,b]}\env{D (B)}(\|t\|)$ so that for all $t\in[a,b]$
\begin{equation}
\bcoeff_1\nabla B_1(t)\cdot\hat{t}\leq \max(\bcoeff_{\LB}\ \omega,\norminf{\bcoeff}\omega).
\end{equation}
Substituting this into \eqref{eq:Dbound2} yields \eqref{eq:RawGradBound}.

\subsection{\Cref{eq:RawEigBound}}\label{sec:DDQ}

Let $\nabla^2 B_j$ and $\nabla^2 W_j^k$ for $k\in\{1,2\}$ denote the Hessians of the bump and two wave functions for each spike.
By decomposing $Q$'s Hessian into its bump and wave components and using envelopes in \eqref{eq:Envelopes} and \eqref{eq:RawEnvelopes}, for any unit vector $v$
\begin{align}
v^T\nabla^2Q(t)v&=\ \sum_{t_j\in T} v^T(\bcoeff_j \nabla^2 B_j(t)+\wocoeff_j\nabla^2 W^1_j(t)+\wtcoeff_j\nabla^2 W^2_j(t))\ v\\
\begin{split}
&\leq \bcoeff_1 v^T \nabla^2 B_1(t) v+\norminf{\wocoeff}\ltmaxwo(\norm{t-t_1})+\norminf{\wtcoeff}\ltmaxwt(\norm{t-t_1})\\
&\qquad+\sum_{t_j\in T\setminus\{t_1\}} \norminf{\bcoeff}\ltamaxb(\norm{t-t_j})+\norminf{\wocoeff}\ltmaxwo(\norm{t-t_j})\\
&\qquad\qquad+\norminf{\wtcoeff}\ltmaxwt(\norm{t-t_j}).\label{eq:Hbound}
\end{split}\\
\begin{split}
&\leq \bcoeff_1 v^T \nabla^2 B_1(t) v+\norminf{\wocoeff}\ltmaxwo(a)+\norminf{\wtcoeff}\ltmaxwt(a)\\
&\qquad+\sum_{U\in \CC_{\leq 8}}\norminf{\bcoeff}\ltamaxb(d_U)+\norminf{\wocoeff}\ltmaxwo(d_U)+\norminf{\wtcoeff}\ltmaxwt(d_U)+\epsilon\label{eq:Hbound1}
\end{split}
\end{align}
We get \eqref{eq:Hbound1} by noting the envelope functions decrease monotonically and $a\leq\norm{t-t_1}$ and $d_U\leq\norm{t-t_j}$ for $t_j\in U$, and since the combined contributions for spikes in $U$ outside $\CC_{\leq 8}$ are less than $10^{-9}$ by \Cref{lem:EVTail}.

The term $v^T \nabla^2 B_1(t)v$ from the bump at the origin can be bounded using \eqref{eq:RawEnvelopes}.
As with \eqref{eq:RawGradBound} $\bcoeff_1\geq 0$ since $Q(t_1)=1$ and $\bcoeff_{\LB}\leq\abs{\bcoeff_1}$. 
Thus when $v^T \nabla^2 B_1(t) v\leq 0$
\begin{equation}
\bcoeff_1 v^T \nabla^2 B_1(t) v\leq\bcoeffmin\env{\lambda(B)}(\norm{t}).
\end{equation}
and when $v^T \nabla^2 B_1(t) v\geq 0$,
\begin{equation}
\bcoeff_1v^T \nabla^2 B_1(t) v\leq \norminf{\bcoeff}\env{\lambda(B)(\norm{t})}.
\end{equation}
By defining $\eta:=\sup_{t\in[a,b]}\env{\lambda(B)}(\|t\|)$, then for all $t\in[a,b]$
\begin{equation}
\bcoeff_1 v^T \nabla^2 B_1(t)v\leq \max(\bcoeffmin\ \eta,\norminf{\bcoeff}\eta).
\end{equation}
Substituting this in \eqref{eq:Hbound1} yields \eqref{eq:RawEigBound}:
\begin{equation}
\begin{aligned}
v^T\nabla^2Q(t)v&\leq \max(\bcoeffmin\ \eta,\norminf{\bcoeff}\eta)
+\norminf{\wocoeff}\envEV{W^1}(a)+\norminf{\wtcoeff}\envEV{W^2}(a)\\
&\qquad+\sum_{U\in \CC_{\leq 8}}\norminf{\bcoeff}\envEV{B}(d_U)+\norminf{\wocoeff}\envEV{W^1}(d_U)+\norminf{\wtcoeff}\envEV{W^2}(d_U)+\epsilon.
\end{aligned}
\end{equation}
If instead $Q(t_1)=-1$, a similar lower bound holds as we show $Q$ to be positive definite and swap signs, inequalities and maximums for minimums accordingly.

\subsection{A Note on Rotational Invariance}

As we point out before introducing \Cref{lem:Qtri}, we simplify our argument by assuming that $[a,b]$ is an interval along the positive horizontal axis within the disk of radius $\spikesep(T)$.
Since the envelope functions are radially symmetric, the bounds \eqref{eq:QBound}, \eqref{eq:RawGradBound} and \eqref{eq:RawEigBound} are not limited to points on the positive horizontal axis but generalize to all points within the disk.
We show this in detail for \eqref{eq:QBound}.
If $\phi$ represents a rotation of the plane then by applying $\phi$ to our original partition $\{\phi(U)\}$ is another partition.
Let $d_{\phi(U)}=d(\phi(U),\phi([a,b]))$ and $t\in[a,b]$ on the positive horizontal axis as before.
Then,
\begin{align}
\abs{Q(\phi(t))}&=\abs{\sum_{j=0}\bcoeff_j B_j(\phi(t))+\wocoeff_j W^1_j(\phi(t))+\wtcoeff_j W^2_j(\phi(t))}\\
\begin{split}
&\leq\norminf{\bcoeff}\enva{B}(a)+\norminf{\wocoeff}\enva{W^1}(a)+\norminf{\wtcoeff}\enva{W^2}(a)\\
&\qquad+\sum_{U\in\CC_{\leq8}}\norminf{\bcoeff}\enva{B}(d_{\phi (U)})+\norminf{\wocoeff}\enva{W^1}(d({\phi(U),})+\norminf{\wtcoeff}\enva{W^2}(d_{\phi(U)})+\epsilon
\end{split}
\\
\begin{split}
&\leq\norminf{\bcoeff}\enva{B}(a)+\norminf{\wocoeff}\enva{W^1}(a)+\norminf{\wtcoeff}\enva{W^2}(a)\\
&\qquad+\sum_{U\in\CC_{\leq8}}\norminf{\bcoeff}\enva{B}(d_U)+\norminf{\wocoeff}\enva{W^1}(d_U)+\norminf{\wtcoeff}\enva{W^2}(d_U)+\epsilon
\end{split}
\end{align}
which is the bound in \eqref{eq:QBound}. A similar rotational invariance argument can be applied to \eqref{eq:RawGradBound} and \eqref{eq:RawEigBound} to show that they hold for all points in the disk of radius $\spikesep(T)$.

\end{document}